\documentclass[11pt]{article}
\usepackage[utf8]{inputenc} 
\usepackage{latexsym,amsmath,amssymb,textcomp}
\usepackage[all]{xy}
\usepackage[nottoc, notlof, notlot]{tocbibind}
\usepackage{geometry} 
\geometry{a4paper} 
\usepackage{graphicx} 
\usepackage{mathtools}
\usepackage{amsthm}
\usepackage{amsmath}
\usepackage{booktabs}
\usepackage{array} 
\usepackage{paralist}
\usepackage{verbatim} 
\usepackage{subfig} 
\usepackage{ stmaryrd }
\usepackage{hyperref}
\usepackage{fancyhdr} 
\title{Equivalences between blocks of cohomological Mackey algebras.}
\author{Baptiste Rognerud}
\begin{document}
\maketitle
\theoremstyle{plain}
\newtheorem{theo}{Theorem}[section]
\newtheorem{theox}{Theorem}[section]
\renewcommand{\thetheox}{\Alph{theox}}
\newtheorem{prop}[theo]{Proposition}
\newtheorem{lemma}[theo]{Lemma}
\newtheorem{coro}[theo]{Corollary}
\newtheorem{question}[theo]{Question}
\newtheorem{notations}[theo]{Notations}
\theoremstyle{definition}
\newtheorem{de}[theo]{Definition}
\newtheorem{dex}[theox]{Definition}
\theoremstyle{remark}
\newtheorem{ex}[theo]{Example}
\newtheorem{re}[theo]{Remark}
\newtheorem{res}[theo]{Remarks}
\renewcommand{\labelitemi}{$\bullet$}
\newcommand{\comu}{co\mu}
\newcommand{\mo}{\ensuremath{\hbox{mod}}}
\newcommand{\Mo}{\ensuremath{\hbox{Mod}}}
\newcommand{\decale}[1]{\raisebox{-2ex}{$#1$}}
\newcommand{\decaleb}[2]{\raisebox{#1}{$#2$}}
\begin{abstract}
Let $G$ be a finite group and $(K,\mathcal{O},k)$ be a $p$-modular system ``large enough". Let $R=\mathcal{O}$ or $k$. There is a bijection between the blocks of the group algebra $RG$ and the central primitive idempotents (the blocks) of the so-called cohomological Mackey algebra $co\mu_{R}(G)$. Here, we prove that a so-called \emph{permeable} derived equivalence between two blocks of group algebras implies the existence of a derived equivalence between the corresponding blocks of cohomological Mackey algebras. In particular, in the context of Brou\'e's abelian defect group conjecture, if two blocks are \emph{splendidly} derived equivalent, then the corresponding blocks of cohomological Mackey algebras are derived equivalent. 
\end{abstract}
\par\noindent
{\it{\footnotesize   Key words: Modular representation. Finite group. Mackey functor. Block theory.}}
\par\noindent
{\it {\footnotesize A.M.S. subject classification: 20C05, 18E30,16G10.}}

\section{Introduction.}
The notion of Mackey functor, introduced by Green in \cite{green}, is a generalization of linear representations of a finite group $G$. A Mackey functor, for Green, is the data of a representation of $N_{G}(H)$ for every subgroup $H$ of $G$, together with relations between these representations. A couple of years later, Dress gave a completely different, but equivalent, definition using the formalism of categories. Twenty years later, Th\'evenaz and Webb introduced the Mackey algebra and proved that a Mackey functor is nothing but a module over this algebra. 
\newline A Mackey functor is \emph{cohomological} if its restriction and induction maps behave like those of the cohomology of groups. The category consisting of cohomological Mackey functors is a full subcategory of the category of Mackey functors. This category is equivalent to the category of modules over the so-called cohomological Mackey algebra. 
Let $R$ be a commutative ring. The cohomological Mackey algebras share a lot of properties of  group algebras, for example $co\mu_{R}(G)$ is $R$-free of finite rank and this rank is independent of the ring $R$. Moreover if $R$ is a field of characteristic which does not divide the order of $G$, then $co\mu_{R}(G)$ is semi-simple. When $(K,\mathcal{O},k)$ is a $p$-modular system, it is possible to define a decomposition theory for $co\mu_{\mathcal{O}}(G)$, in particular the Cartan matrix of the cohomological Mackey algebra is symmetric. However there are some differences with group algebras: most of the time the determinant of the Cartan Matrix of $co\mu_{k}(G)$ is zero. Moreover the cohomological Mackey algebra is not a symmetric algebra. 
\newline It has been noticed for a long time that there are deep links between the representation's theory of finite groups and the theory of Mackey functors. Some objects of the first theory are much more natural when you see them via the Mackey functors' theory (e.g. $p$-permutation modules, Brauer quotient, $\cdots$). It is quite natural to think that this theory may be used in order to understand some open questions of the representation's theory of finite group. The first  attempts was about Alperin weight's conjecture. Jacques Th\'evenaz and Peter Webb proved that this conjecture is equivalent to a conjecture on Mackey functors. Here the arithmetic of the first conjecture is encoded in two Mackey functors. 
\newline In this paper, we propose to look at Brou\'e's abelian defect group conjecture and try to see if the effect of the equivalence conjectured by Brou\'e on the Mackey algebras. 
\newline Let $R=\mathcal{O}$ or $k$. In their paper, Th\'evenaz and Webb proved that there is a bijection between the blocks of $RG$ and the primitive central idempotents of the so-called $p$-local Mackey algebra $\mu_{R}^{1}(G)$. In the proof, they remark that there is also a bijection between the blocks of $RG$ and the blocks of the cohomological Mackey algebra $co\mu_{R}(G)$. Let us denote by $b\mapsto \iota(b)$ this bijection. 
\newline Using the Brauer correspondence, we have the following diagram: Let $b$ be a block of $RG$ with defect group $D$ and $b'$ be its Brauer correspondent in $RN_{G}(D)$. 
\begin{equation*}
\xymatrix{
b\in Z(RG)\ar[r]\ar[d] & \iota(b)\in Z(co\mu_{R}(G)\ar[d]) \\
b'\in Z(RN_{G}(D))\ar[r] & \iota(b')\in Z(co\mu_{R}(N_{G}(D))).
}
\end{equation*}
If $D$ is abelian, it is conjectured by Brou\'e that the block algebras $RGb$ and $RN_{G}(D)b'$ are deeply connected.  It is a very natural question to ask if the same can happen for the corresponding Mackey algebras. However, we should notice that, since the cohomological Mackey algebra is not symmetric, the usual stable category is not triangulated, so we decided not to look at stable equivalences.
\newline In this article we will focus on Morita equivalences and derived equivalences. 
\begin{question}[Bouc]
Let $G$ be a finite group and $b$ be a block of $\mathcal{O}G$ with abelian defect group $D$. Let $b'$ be the Brauer correspondant of $b$ in $\mathcal{O}N_{G}(D)$. Is there a derived equivalence $D^{b}(co\mu_{\mathcal{O}}(G)\iota(b))\cong D^{b}(co\mu_{\mathcal{O}}(N_{G}(D))\iota(b'))$ ?
\end{question}
The main result of this paper is the following theorem which settles the question for the cohomological Mackey algebra in the case of a \emph{splendid} equivalence (see \cite{splendid}): 
\begin{theo}\label{thea}
Let $G$ and $H$ be two finite groups, let $b$ be a block of $RG$ and $c$ be a block of $RH$. If $RGb$ and $RHc$ are splendidly derived equivalent, then $$D^{b}(co\mu_{R}(G)\iota(b))\cong D^{b}(co\mu_{R}(H)\iota(c)).$$ 
\end{theo}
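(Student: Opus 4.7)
The plan is to construct a two-sided tilting complex between $co\mu_R(G)\iota(b)$ and $co\mu_R(H)\iota(c)$ from the given splendid Rickard complex $C^{\bullet}$ of $p$-permutation $R[G \times H^{op}]$-modules. The main tool is a functor $\mathbb{F}$ from $p$-permutation $R[G \times H^{op}]$-modules to $(co\mu_R(G), co\mu_R(H))$-bimodules: to a $p$-permutation module $M$, one associates the fixed-point Mackey functor $K \mapsto M^K$ equipped with its natural restrictions, transfers, and conjugations. This is a cohomological Mackey functor for $G \times H^{op}$, and under the standard correspondence between such Mackey functors and bimodules it yields the desired bimodule.

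Applying $\mathbb{F}$ termwise to $C^{\bullet}$ produces a complex of bimodules, and the heart of the proof is to show that this complex is a two-sided tilting complex, that is, that it satisfies Rickard-type isomorphisms in the derived category of bimodules. The most substantial step towards this is to establish that $\mathbb{F}$ is compatible with tensor products of $p$-permutation bimodules: for $p$-permutation bimodules $M$ and $N$ of appropriate shapes, one needs a natural isomorphism
\begin{equation*}
\mathbb{F}(M) \otimes_{co\mu_R(H)} \mathbb{F}(N) \xrightarrow{\sim} \mathbb{F}(M \otimes_{RH} N).
\end{equation*}
The tensor product on the left is the coend-type construction that encodes composition of cohomological Mackey bimodules, and identifying it with the ordinary module tensor product of $p$-permutation bimodules requires a careful analysis using the Mackey formula together with the concrete description of $p$-permutation modules as summands of transitive permutation modules of the form $R[(G \times H^{op})/P]$. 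Combining this with the Rickard isomorphisms $C^{\bullet} \otimes^{L}_{RH} (C^{\bullet})^{\vee} \simeq RGb$ and its symmetric counterpart would then yield the tilting property for $\mathbb{F}(C^{\bullet})$.

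The block identification --- that $\mathbb{F}$ sends $RGb$-bimodules to $co\mu_R(G)\iota(b)$-bimodules, and similarly on the $H$ side --- should follow from the Th\'evenaz--Webb construction of the bijection $\iota$, which is itself articulated through fixed-point data for indecomposable $p$-permutation modules belonging to a given block.

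The main obstacle is the tensor-product compatibility of $\mathbb{F}$. On arbitrary modules the fixed-point functor $M \mapsto (M^K)_K$ is only left exact, and the coend that defines composition of Mackey bimodules is not in general computed by the naive module tensor product. The $p$-permutation hypothesis is the precise condition that makes these two operations match: the combinatorial structure of permutation modules forces the coend to collapse. Making this rigorous, presumably via a direct computation on indecomposable summands of $R[(G \times H^{op})/P]$ for $p$-subgroups $P$, followed by additivity and a d\'evissage argument, will be the main technical step of the proof.
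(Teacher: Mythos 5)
Your overall strategy --- transport the splendid Rickard complex $C^{\bullet}$ term by term to a complex over the cohomological Mackey algebras and verify a Rickard-type condition there --- is in the right spirit, but as written it has two genuine gaps. The first is in the very definition of $\mathbb{F}$. The fixed-point construction $K\mapsto M^{K}$ on subgroups $K\leqslant G\times H^{op}$ produces a cohomological Mackey functor for the \emph{product} group, i.e.\ a $co\mu_{R}(G\times H^{op})$-module, and there is no ``standard correspondence'' between such objects and $(co\mu_{R}(G),co\mu_{R}(H))$-bimodules: by Yoshida, $co\mu_{R}(G\times H^{op})\cong End_{R[G\times H^{op}]}(R\Omega_{G\times H^{op}})$ sees \emph{all} subgroups of $G\times H^{op}$, including the twisted diagonal ones, whereas $co\mu_{R}(G)\otimes_{R} co\mu_{R}(H)^{op}$ only sees product subgroups; already for $G=H=C_{p}$ in characteristic $p$ the two module categories differ. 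The bimodule you actually want attached to a bimodule $X$ is of the form $Hom_{RG}(R\Omega_{G}, X\otimes_{RH}R\Omega_{H})$, and for this to live over the right algebras you must already know that $X\otimes_{RH}-$ sends $p$-permutation modules to $p$-permutation modules (the paper's \emph{permeability}), a condition that has to be imposed on $X$ \emph{and} on $X^{*}$ --- it holds for splendid complexes but is not automatic for arbitrary bimodules.

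The second gap is the one you acknowledge: the compatibility $\mathbb{F}(M)\otimes_{co\mu_{R}(H)}\mathbb{F}(N)\cong\mathbb{F}(M\otimes_{RH}N)$ is asserted, not proved, and in the coend formulation you chose it is the hardest possible version of the statement. The paper circumvents it entirely. Via a block form of Yoshida's equivalence, $co\mu_{R}(G)\iota(b)$-$Mod$ is identified with the category $Fun_{R}^{+}(b)$ of contravariant functors on $p$-permutation $RGb$-modules, and a permeable complex $X_{\bullet}$ acts by \emph{precomposition}, $L_{X_{\bullet}}(F_{\bullet})$ being the total complex of $F_{i}(X_{j}\otimes_{RGb}-)$. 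In that setting the composition isomorphism $L_{X}\circ L_{Y}\cong L_{Y\otimes X}$ is essentially formal (only additivity of the $F_{i}$ is used), $L$ of a contractible complex is contractible, and $L_{X}$ preserves finitely generated projectives because the Yoneda functor $Hom_{RHc}(-,V)$ is sent to $Hom_{RGb}(-,X^{*}\otimes_{RHc}V)$ --- this is exactly where one-sided projectivity of $X$ and permeability of $X^{*}$ are used. The Rickard isomorphisms for $C^{\bullet}$ then give an equivalence $K^{-}(proj(Fun_{R}^{+}(b)))\cong K^{-}(proj(Fun_{R}^{+}(c)))$, and Rickard's Morita theory for derived categories finishes the proof. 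If you want to keep your bimodule formulation, the viable route is to \emph{define} $\mathbb{F}(X)$ as the Yoneda bimodule above and deduce your tensor compatibility from the precomposition statement together with projectivity of $\mathbb{F}(X)$ on one side, rather than attacking the coend by a Mackey-formula d\'evissage.
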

In the fist part, we recall Yoshida's point of view on cohomological Mackey functors. There are several points of view on the notion of Mackey functors, so there are several points of view on the notion of cohomological Mackey functors. There are technical issues about the different versions of Yoshida's equivalence. A systematic use of the Burnside functor will clarify the situation. 
In the second part, we give an explicit isomorphism between the center of the group algebra and the center of the cohomological algebra. With this isomorphism we have a description of the blocks of the cohomological Mackey algebras. This decomposition is compatible with the block decomposition of the category of cohomological Mackey functors introduced by Th\'evenaz and Webb. With this description, we prove a block version of Yoshida's theorem.  
 \newline\indent Using this block version of the \emph{Yoshida} equivalence, we see that a so-called \emph{permeable} Morita (resp. derived) equivalence between blocks of group algebras can be lifted to a Morita (resp. derived) equivalence between the corresponding blocks of cohomological Mackey algebras. For example splendid Morita equivalences, and splendid derived equivalences can be lifted. Even if the notion of permeable equivalence is very natural it seems to the author that it has not been considered yet. We investigate on the very basic properties of these equivalences. In particular, we show that in general, Morita equivalences are not permeable and we give an example of permeable Morita equivalence which is not splendid. 
 \newline\indent We give two applications of Theorem \ref{thea}. The first one is a new point of view on Bouc's Theorem about the determinant of the Cartan matrices of the blocks of the cohomological Mackey algebras. He proved that this determinant is non zero if and only if the block is nilpotent with a cyclic defect group. The proof is based on a combinatorial approach and it may be surprising that the nilpotent blocks appear here. We show that it is in fact very natural and comes from a structural reason. Finally we give an extremely naive application of Theorem \ref{thea} to representation of finite groups. If the Cartan matrices of two blocks $co\mu_{R}(G)\iota(b)$ and $co\mu_{R}(H)\iota(c)$ are not the same, then $RGb$ and $RHc$ are not splendidly (or permeable) Morita equivalent. This is a sufficient criterion for two blocks to not be splendidly Morita equivalent. This is particularly useful since it is possible to compute these matrices via an algorithm (in GAP4 e.g.). We give a particularly surprising example of nilpotent blocks with quaternion defect group, which was discover by using this method.  
 \begin{re}
 The purpose of two first parts of this paper is to investigate on the blocks of the cohomological Mackey algebra and to prove a block version of Yoshida's equivalence. If this proof involves rather technical discussion about Mackey functors, the result is not technical at all. Here, we do not assume the reader familiar with any deep result on Mackey functors. Still, if the reader is more interest by the link between splendid equivalences and equivalences between blocks of cohomological Mackey algebras, he might take Corollary \ref{yoshida_co} as a definition. 
 \end{re}
 \paragraph{Notations:} Let $R$ be a commutative ring with unit. We denote by $R$-$Mod$ the category of (all) $R$-modules and by $R$-$mod$ the category consisting of the finitely generated $R$-modules. We denote by $proj(R)$ the category of finitely generated projective $R$-modules.
\newline Let $G$ be a finite group then a \emph{permutation projective} $RG$-module is a direct summand of a permutation module. Let $p$ be a prime number. We denote by $(K,\mathcal{O},k)$ a $p$-modular system, i-e $\mathcal{O}$ is a complete discrete valuation ring with maximal ideal $\mathfrak{p}$, such that $\mathcal{O}/\mathfrak{p}=k$ is a field of characteristic $p$ and $Frac(\mathcal{O})=K$ is a field of characteristic zero. If $R=\mathcal{O}$ or $k$, then the permutation projective $RG$-modules are called $p$-\emph{permutation} modules. We denote by $G$-$set$ the category of finite $G$-sets. If $H$ is a subgroup of $G$ then, we denote by $N_{G}(H)$ its normalizer in $G$. The quotient $N_{G}(H)/H$ will be, sometimes, denoted by $\overline{N}_{G}(H)$. If $G$ is a finite group, the union of all transitive $G$-sets is denoted by $\Omega_{G}$. 
\newline If $\mathcal{A}$ is an abelian category, we denote by $C^{-}(\mathcal{A})$ the category of right bounded complexes of $\mathcal{A}$, and by $C^{b}(\mathcal{A})$ the category of right and left bounded complexes of $\mathcal{A}$. We denote by $K^{-}(\mathcal{A})$ and $K^{b}(\mathcal{A})$ the corresponding homotopy categories, and finally by $D^{-}(A)$ and $D^{b}(A)$ the corresponding derived categories. Moreover, if $A$ is an $R$-algebra, we denote by $D^{s}(A)$ the derived category $D^{s}(A$-$Mod)$ for $s= b$ or $s=-$. Finally, if $X$ is an $A$-module (resp. a bounded complex of $A$-modules), we denote by $X^{*}$ the $R$-linear dual of $X$.
\newline If $F : \mathcal{A}\to \mathcal{B}$ and $G: \mathcal{B}\to \mathcal{A}$ are two functors, we denote by $F \dashv G$ the fact that $F$ is a left adjoint of $G$.  
\newline N.B. We will denote by the same letter the block idempotents for the ring $\mathcal{O}$ and the field $k$.  

\section{Yoshida's point of view on cohomological Mackey functors.}
\subsection{Basic definitions.}
For basic definitions of Mackey functors, we refer the reader to Section $2$ of \cite{tw}. In this paper we will use Dress' point of view and Th\'evenaz-Webb's point of view. We will use Green's point of view only for the definition of cohomological Mackey functors since it is much more natural. Here, we just recall the definition of the Mackey algebra.
Let $R$ be a commutative ring with unit. 
\begin{de}
The Mackey  algebra $\mu_{R}(G)$ for $G$ over $R$ is the unital associative algebra with generators  $t_{H}^{K}$, $r^{K}_{H}$ and $c_{g,H}$ for $H\leqslant K\leqslant G$ and $g\in G$, with the following relations:
\begin{itemize}
\item $\sum_{H\leqslant G}t^{H}_{H}=1_{\mu_{R}(G)}$. 
\item $t^{H}_{H}=r^{H}_{H}=c_{h,H}$ for $H\leqslant G$ and $h\in H$. 
\item $t^{L}_{K}t_{H}^{K}=t^{L}_{H}$, $r^{K}_{H}r^{L}_{K}=r^{L}_{H}$ for $H\subseteq K\subseteq L$. 
\item $c_{g',{^{g}H}}c_{g,H}=c_{g'g,H}$, for $H\leqslant G$ and $g,g'\in G$. 
\item $t^{{^{g}K}}_{{^{g}H}}c_{g,H}=c_{g,K}t^{K}_{H}$ and $r^{{^{g}K}}_{{^{g}H}}c_{g,K}=c_{g,H}r^{K}_{H}$, $H\leqslant K$, $g\in G$. 
\item $r^{H}_{L}t^{H}_{K}=\sum_{h\in [L\backslash H / K]} t^{L}_{L\cap {^{h} K}} c_{h, L^{h} \cap K} r^{K}_{L^{h}\cap K}$ for $L\leqslant H \geqslant K$. 
\item All the other products of generators are zero. 
\end{itemize}
\end{de}
\begin{de}
A Mackey functor for $G$ over $R$ is a left $\mu_{R}(G)$-module.
\end{de}
\begin{prop}\label{basis}
The Mackey algebra is a free $R$-module, of finite rank independent of $R$. The set of elements $t^{H}_{K}xr^{L}_{K^{x}}$, where $H$ and $L$ are subgroups of $G$, where $x\in [H\backslash G/L]$, and $K$ is a subgroup of $H{\cap~{\ ^{x}L}}$ up to $(H\cap {\ ^{x}L})$-conjugacy, is an $R$-basis of $\mu_{R}(G)$.  
\end{prop}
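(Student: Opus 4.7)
The plan is to establish the two claims in order: first spanning, then linear independence (which will force both the basis property and the $R$-freeness), and to deduce the independence of the rank as a corollary via base change.

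\textbf{Spanning.} I would show that any product of generators equals zero or rewrites to the proposed normal form $t^{H}_{K}\, x\, r^{L}_{K^{x}}$ (where I use $x$ as a shorthand for $c_{x,L}$ followed by the identification $t^{H}_{H}c_{x,L} = \dots$). The relation ``all other products of generators are zero'' immediately discards any composition in which adjacent generators do not match subgroup-wise. The remaining admissible monomials are alternating strings of transfers, conjugations and restrictions; the Mackey formula $r^{H}_{L}t^{H}_{K}=\sum_{h\in [L\backslash H / K]} t^{L}_{L\cap {^{h} K}} c_{h, L^{h} \cap K} r^{K}_{L^{h}\cap K}$ lets me push every transfer to the left of every restriction, while the two conjugation identities $t^{{^{g}K}}_{{^{g}H}}c_{g,H}=c_{g,K}t^{K}_{H}$ and $r^{{^{g}K}}_{{^{g}H}}c_{g,K}=c_{g,H}r^{K}_{H}$ collect all conjugations into a single middle factor. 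Choosing $x$ in a set of double coset representatives $[H\backslash G/L]$ and $K$ up to $(H\cap{}^{x}L)$-conjugacy fixes a canonical representative.

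\textbf{Linear independence.} This is the genuine obstacle. The generators-and-relations presentation could in principle collapse; I need an external realization. My approach is to invoke Dress's description: form the category whose objects are finite $G$-sets and whose morphisms from $U$ to $V$ are isomorphism classes of spans $U \leftarrow W \rightarrow V$ (with $R$-linear extension and composition via pullback). Summing endomorphisms of $\coprod_{H\leqslant G} G/H$ produces an $R$-algebra $A$ carrying operators satisfying the defining relations of $\mu_{R}(G)$, so there is a surjective algebra map $\mu_{R}(G)\twoheadrightarrow A$. On the other hand, by the elementary classification of transitive $G$-sets on $(G/L)\times (G/H)$ (equivalently, orbits of $G$ on this set), the Hom space $\mathrm{Hom}(G/L, G/H)$ has an explicit $R$-basis parametrized by pairs $(x,K)$ with $x\in [H\backslash G/L]$ and $K\leqslant H\cap{}^{x}L$ taken up to $(H\cap{}^{x}L)$-conjugacy, and these parameters are precisely those of the proposed spanning set. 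The span map $\mu_{R}(G)\twoheadrightarrow A$ sends a spanning element to the corresponding basis vector, so it must be bijective, which simultaneously yields the basis property and the $R$-freeness of $\mu_{R}(G)$.

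\textbf{Independence of $R$.} The basis parameters $(H,L,x,K)$ are purely combinatorial invariants of $G$, independent of $R$. Concretely, the relations defining $\mu_{R}(G)$ have integer coefficients, so $\mu_{R}(G)\cong R\otimes_{\mathbb{Z}}\mu_{\mathbb{Z}}(G)$, and the basis of the latter remains a basis after base change. The rank equals the number of $G$-orbits on $\bigsqcup_{H,L}(G/H)\times(G/L)\times\{\text{subgroups of stabilizer}\}/\sim$, an integer depending only on $G$.

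The main obstacle is setting up the Dress-style category cleanly enough that (i)~the defining relations of $\mu_{R}(G)$ are visibly satisfied by transfer/restriction/conjugation spans, and (ii)~the count of indecomposable spans matches the stated parameter set; once both are in place the surjection forced by Step~1 is automatically an isomorphism.
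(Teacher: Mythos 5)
Your proposal is correct and takes essentially the same route as the proof the paper defers to (Section 3 of Th\'evenaz--Webb): spanning via the Mackey formula and the conjugation relations, and linear independence by mapping onto the span (Burnside) realization of the Mackey algebra, whose Hom-sets are free $R$-modules on isomorphism classes of transitive spans parametrized by exactly the pairs $(x,K)$ in the statement --- this is the isomorphism $\beta:\mu_{R}(G)\to RB(\Omega_{G}^{2})$ of Proposition \ref{burnside_alg}. The only nitpick is your parenthetical ``equivalently, orbits of $G$ on this set'': transitive $G$-sets \emph{over} $(G/L)\times(G/H)$ are classified by an orbit \emph{together with} a subgroup $K$ of its stabilizer up to conjugacy, which is what your subsequent parametrization correctly records.
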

\begin{proof}
Section $3$ of \cite{tw}. 
\end{proof}
Now, let us recall the definition of the Burnside group of a finite $G$-set. 
\begin{de}[2.4.1 \cite{bouc_green}]\label{burnside}
If $X$ is a finite $G$-set, the category of $G$-sets over $X$ is the category with objects $(Y,\phi)$ where $Y$ is a finite $G$-set and $\phi$ is a morphism from $Y$ to $X$. A morphism $f$ from $(Y,\phi)$ to $(Z,\psi)$ is a morphism of $G$-sets $f:Y\to Z$ such that $\psi\circ f=\phi$. 
\newline\indent The Burnside group of $X$, denoted by $B(X)$, is the Grothendieck group of the category of $G$-sets over $X$, for relations given by disjoint union. Moreover, we denote by $RB(X)$ the Burnside group after scalars extension. That is $RB(X) = R\otimes_{\mathbb{Z}}B(X)$.
\end{de}
\begin{re}
 If $X$ is a $G$-set, the Burnside group $RB(X^2)$ has a ring structure. A $G$-set $Z$ over $X\times X$ is the data of a $G$-set $Z$ and a map $(b\times a)$ from $Z$ to $X\times X$, denoted by $(X\overset{b}{\leftarrow} Y \overset{a}{\rightarrow} X)$. The product of (the isomorphism class of ) $(X\overset{\alpha}{\leftarrow} Y \overset{\beta}{\rightarrow} X)$ and (the isomorphism class of )$(X\overset{\gamma}{\leftarrow} Z \overset{\delta}{\rightarrow} X)$ is given by (the isomorphism class of) the pullback along $\beta$ and $\gamma$. 
\begin{equation*}
\xymatrix{
& & P\ar@{..>}[dr]\ar@{..>}[dl] & & \\
& Y\ar[dl]_{\alpha}\ar[dr]^{\beta} & & Z\ar[dl]_{\gamma}\ar[dr]^{\delta} & \\
X & & X & &X
}
\end{equation*}
The identity of this ring is (the isomorphism class) $\xymatrix{&X\ar@{=}[rd]\ar@{=}[dl]&\\X& &X }$
\newline In the rest of the paper, we will denote by the same symbol a $G$-set over $X\times X$ and its isomorphism class in $RB(X\times X)$.
\end{re}
Let us recall that the Mackey algebra is isomorphic to a Burnside algebra:
\begin{prop}[Proposition 4.5.1 \cite{bouc_green}]\label{burnside_alg}
The Mackey algebra $\mu_{R}(G)$ is isomorphic to $RB(\Omega_{G}^{2})$, where $\Omega_{G}=\sqcup_{L\leqslant G} G/L$. 
\end{prop}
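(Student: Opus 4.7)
The plan is to construct the isomorphism by hand: define an $R$-algebra map $\phi : \mu_R(G) \to RB(\Omega_G^2)$ on the generators of $\mu_R(G)$, verify that the defining relations of the Mackey algebra are satisfied in $RB(\Omega_G^2)$, and then check that $\phi$ sends the basis of Proposition~\ref{basis} bijectively to the canonical basis of $RB(\Omega_G^2)$ indexed by transitive $G$-sets over $\Omega_G^2$.

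Writing a span $(X \overset{\alpha}{\leftarrow} Y \overset{\beta}{\to} X)$ for the corresponding class in $RB(X\times X)$, I would set $\phi(t^H_K)$ to be the span $(G/H \leftarrow G/K = G/K)$ for $K\leqslant H$, with left leg the canonical projection; $\phi(r^K_H)$ to be $(G/H = G/H \to G/K)$ for $H\leqslant K$, with right leg the canonical projection; and $\phi(c_{g,H})$ to be $(G/{}^gH \leftarrow G/H = G/H)$, whose left leg is the $G$-isomorphism $xH \mapsto xg^{-1}\,{}^gH$. In particular each $\phi(t^H_H)$ is the diagonal span on $G/H$, and $\sum_{H\leqslant G}\phi(t^H_H)$ equals the diagonal of $\Omega_G$, which is the unit of $RB(\Omega_G^2)$; so $\phi$ is unital.

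Next I would verify the relations. Composition of spans in $RB(\Omega_G^2)$ is computed by pullback, and the routine relations (transitivity of $t$ and $r$, the cocycle for $c$, and the commutation of $c$ with $t, r$) all reduce to straightforward identifications of fibre products of transitive $G$-sets. The one substantial identity is the Mackey formula
\[
r^H_L t^H_K \;=\; \sum_{h\in [L\backslash H/K]} t^L_{L\cap {}^hK}\, c_{h,L^h\cap K}\, r^K_{L^h\cap K},
\]
which under $\phi$ becomes the classical decomposition
\[
G/L \times_{G/H} G/K \;\cong\; \bigsqcup_{h\in [L\backslash H/K]} G/(L\cap {}^hK),
\]
together with the explicit form of the two projection maps (canonical projection to $G/L$, and multiplication by $h$ followed by canonical projection to $G/K$). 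This double-coset computation is where essentially all of the real content of the argument sits.

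Finally I would match bases. A transitive $G$-set over $\Omega_G\times\Omega_G$ is, up to isomorphism, a span $(G/H \leftarrow G/K \to G/L)$; using the $G$-action one puts the left leg in canonical form and then classifies the right leg by a double-coset representative $x\in [H\backslash G/L]$ together with a subgroup $K \leqslant H\cap {}^xL$ taken up to $(H\cap {}^xL)$-conjugacy, which is exactly the parametrisation of the basis of $\mu_R(G)$ in Proposition~\ref{basis}. A short composition computation, using the formulas above for $\phi$ on the generators together with the Mackey-formula-as-pullback identification, shows that $\phi(t^H_K\, x\, r^L_{K^x})$ is the corresponding transitive span. Hence $\phi$ is an $R$-linear bijection that preserves the unit and respects composition, i.e. an isomorphism of $R$-algebras. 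The main obstacle in executing the plan is the Mackey-formula computation; everything else is formal.
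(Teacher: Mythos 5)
Your proposal is correct and follows essentially the same route as the paper, which likewise defines the isomorphism on the generators $t$, $r$, $c$ by the same spans (your span for $c_{g,H}$ is isomorphic as a $G$-set over $G/{}^gH\times G/H$ to the paper's $\gamma_{H,g}$-span) and defers the verification of the relations and the basis matching to Bouc's Proposition 4.5.1. You correctly identify the Mackey formula, realised as the double-coset decomposition of the fibre product $G/L\times_{G/H}G/K$, as the only nontrivial check.
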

\begin{proof}
Let $H\leqslant K$ be two subgroups of $G$, then we denote by $\pi^{K}_{H}$ the natural surjection from $G/H$ to $G/K$. If $g\in G$, then we denote by $\gamma_{H,g}$ the map from $G/\ ^{g}H$ to $G/H$ defined by $\gamma_{H,g}(xgHg^{-1}) =xgH$. The isomorphism $\beta$ is defined on the generators of $\mu_{R}(G)$ by: $$\beta(t_{H}^{K}) = \xymatrix{ & G/H\ar[dl]_{\pi^{K}_{H}}\ar@{=}[rd]  & \\ \Omega_{G}\supset G/K&& G/H\subset \Omega_{G}}$$
$$\beta(r_{H}^{K}) = \xymatrix{ & G/H\ar[dr]^{\pi^{K}_{H}}\ar@{=}[ld]  & \\ \Omega_{G}\supset G/H&& G/K\subset \Omega_{G}}$$
$$\beta(c_{g,H}) =  \xymatrix{ & G/\ ^gH\ar[dr]^{\gamma_{H,g}}\ar@{=}[ld]  & \\ \Omega_{G}\supset G/\ ^gH&& G/H\subset \Omega_{G}}$$
\end{proof}
For basic results about cohomological Mackey functors see Section $16$ of \cite{tw}. 
\newline A Mackey functor, in the sense of Green, is cohomological if whenever $K\leqslant H\leqslant G$, one has $t_{K}^{H}r^{H}_{K} = |H:K|Id_{M(H)}$. Let us denote by $Comack_{R}(G)$ the full subcategory consisting of cohomological Mackey functors. The category $Comack_{R}(G)$ is equivalent to the category of modules over the so-called cohomological Mackey algebra, denoted by $co\mu_{R}(G)$. The cohomological Mackey algebra is the quotient of the Mackey algebra $\mu_{R}(G)$ by the ideal generated by the $t_{K}^{H}r^{H}_{K} - |H:K|t^{H}_{H}$ for $K\leqslant H\leqslant G$. If $x\in \mu_{R}(G)$, we denote by $\overline{x}$ its image in the quotient $co\mu_{R}(G)$. 
\subsection{Yoshida's equivalence.}
In this section we recall Yoshida's theorem for cohomological Mackey functors. This theorem says that the category of cohomological Mackey functors for a group $G$ over a ring $R$, in the sense of Dress, is equivalent to the category of $R$-linear contravariant functors from the category of permutation projective modules to the category of $R$-modules. There are several points of view on the notion of Mackey functors, so for each of these points of view, we have a version of the Yoshida's theorem. In general it is not easy to \emph{move} between these several versions. Since we will use in the next section an explicit version of Yoshida's theorem for the modules over the cohomological algebra \emph{and} for Dress' point of view, we recall here how the Yoshida's equivalence is defined. We believe that a systematic use of the Burnside functor will clarify the link between these different versions of Yoshida's Theorem.
\newline The main tool is the so-called linearization Mackey functor:
\begin{lemma}
Let $X$ be a finite $G$-set. We set $\Pi(X)=RX$, that is the permutation $RG$-module with basis $X$. Let $f: X\to Y$ be a morphism of $G$-sets. Then we have a morphism of $RG$-modules $\Pi^{*}(f) : RY\to RX$ defined as follows:
\begin{equation*}
\Pi^{*}(f)\big(\sum_{y\in Y} r_{y}y\big)=\sum_{x\in X}r_{f(x)}x.
\end{equation*}
On the other direction, we have a morphism $\Pi_{*}(f) : RX\to RY$ defined as follows:
\begin{equation*}
\Pi_{*}(f)(\sum_{x\in X} r_{x}x) = \sum_{x\in X}r_{x}f(x). 
\end{equation*}
The bivariant functor $\Pi$ is a (non commutative) Mackey functors with values in the category $RG$-Mod, i-e we have:
\begin{itemize}
\item The bivariant functor $\Pi$ is additive.
\item If 
\begin{equation*}
\xymatrix{
X\ar[r]^{a}\ar[d]^{b} & Y\ar[d]^{c}\\
Z\ar[r]^{d}& T
}
\end{equation*}
is a pullback diagram of $G$-sets, then 
\begin{equation*}
\xymatrix{
RX\ar[d]^{\Pi_{*}(b)} & RY\ar[d]^{\Pi_{*}(c)}\ar[l]_{\Pi^{*}(a)}\\
RZ& RT\ar[l]^{\Pi^{*}(d)}
}
\end{equation*}
is a commutative diagram.
\end{itemize}
\end{lemma}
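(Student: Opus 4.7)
The plan is to verify the three assertions in order: that $\Pi_*(f)$ and $\Pi^*(f)$ are $RG$-linear, that $\Pi$ is additive, and the pullback (base-change) identity. For $\Pi_*(f)$, $RG$-linearity is immediate from $G$-equivariance of $f$. For $\Pi^*(f)$ it is cleaner to rewrite the formula on basis elements as $\Pi^*(f)(y) = \sum_{x\in f^{-1}(y)} x$; then $G$-equivariance of $f$ gives $f^{-1}(gy) = g\cdot f^{-1}(y)$ and the claim follows by collecting the translated fiber.

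Additivity is essentially tautological from the definitions: for a disjoint union of $G$-sets $X\sqcup X'$, the module $R(X\sqcup X')$ decomposes canonically as $RX\oplus RX'$, and both $\Pi_*$ and $\Pi^*$ visibly respect this decomposition. Functoriality of $\Pi_*$ is just functoriality of the linearization $f\mapsto Rf$; the contravariant functoriality of $\Pi^*$ is a one-line basis check summing over fibers.

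The substantive step is the base-change identity $\Pi_*(b)\Pi^*(a) = \Pi^*(d)\Pi_*(c)$. I would exploit the concrete realization of the pullback as $X = \{(y,z)\in Y\times Z : c(y) = d(z)\}$ with $a$, $b$ the coordinate projections. The universal property then identifies the restriction of $b$ to $a^{-1}(y)$ as a bijection onto $d^{-1}(c(y))$, for each $y\in Y$. Using this bijection, evaluating both sides of the identity on a basis element $y \in Y$ of $RY$ gives
\begin{equation*}
\Pi_*(b)\Pi^*(a)(y) \;=\; \sum_{x\in a^{-1}(y)} b(x) \;=\; \sum_{z\in d^{-1}(c(y))} z \;=\; \Pi^*(d)\Pi_*(c)(y),
\end{equation*}
and extending $R$-linearly completes the verification.

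There is no real obstacle here; the entire lemma reduces to a short sequence of manipulations on finite $G$-sets. The only point that requires care is working with the explicit fiber-product description of the pullback, so that the crucial bijection in the display above is manifest — essentially all of the ``Mackey'' content of the statement is concentrated in that single identification of fibers.
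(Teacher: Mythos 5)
Your verification is correct and is exactly the routine check the paper has in mind (its proof is simply ``Clear''): the only nontrivial point is indeed the bijection $a^{-1}(y)\cong d^{-1}(c(y))$ induced by $b$ on the standard fiber-product model of the pullback, and your reduction to that model is legitimate since the asserted identity is invariant under isomorphism of pullback squares. Nothing is missing.
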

\begin{proof}
Clear. 
\end{proof}
If the context is clear, we will simply denote by $f^{*}$ the morphism $\Pi^{*}(f)$ and by $f_{*}$ the morphism $\Pi_{*}(f)$. 
\begin{de}
Let $G$ be a finite group and $R$ be a commutative ring with unit. Then $End_{RG}(R\Omega_{G})$ is the Yoshida algebra for the group $G$ over the ring $R$. The product is defined by $f \times g = g\circ f$, for $f,g \in End_{RG}(R\Omega_{G})$.
\end{de}
\begin{lemma}\label{lin_proj}
Let $X$ and $Y$ be to finite $G$-sets. Then there is a surjective map $p_{L}$, called the linear projection from $RB(X\times Y)$ to $Hom_{RG}(RX,RY)$, defined on a $G$-set over $X\times Y$ by:
\begin{equation*}
p_{L}( X\overset{b}{\leftarrow} Z \overset{a}{\rightarrow} Y) = a_{*}\circ b^{*}: RX\to RZ\to RY.
\end{equation*}
\end{lemma}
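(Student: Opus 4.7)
The plan is to show three things about $p_L$: it is well-defined on isomorphism classes and extends $R$-linearly, its image lies in $\mathrm{Hom}_{RG}(RX, RY)$, and it is surjective. The first two points should be essentially formal; surjectivity is the real content.

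For well-definedness, I would check that if $(Z,b,a)\cong (Z',b',a')$ via a $G$-set isomorphism $\phi:Z\to Z'$ (so $b=b'\circ\phi$ and $a=a'\circ\phi$), then $a_*\circ b^* = a'_*\circ (b')^*$; this reduces to the observation that $\phi_*$ and $\phi^*$ are mutually inverse, since $\phi$ is bijective. Compatibility with disjoint unions follows from $R(Z_1\sqcup Z_2) = RZ_1\oplus RZ_2$ and the obvious block decomposition of $a_*$ and $b^*$ with respect to this splitting. One then extends $R$-linearly to all of $RB(X\times Y)$. The fact that the image consists of $RG$-linear maps is immediate from the previous lemma, since each of $\Pi_*(a)$ and $\Pi^*(b)$ is a morphism of $RG$-modules.

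For surjectivity, I would reduce to the transitive case using that $RX$ and $RY$ decompose as direct sums of permutation modules on transitive $G$-sets and that $B(-)$ is additive on disjoint unions. So assume $X=G/H$ and $Y=G/K$. By the Frobenius reciprocity / fixed-point description,
\[
\mathrm{Hom}_{RG}(R[G/H], R[G/K]) \cong \bigl(R[G/K]\bigr)^H,
\]
which is a free $R$-module with basis given by the sums over the $H$-orbits of $G/K$, i.e., indexed by double cosets $H\backslash G/K$. For a representative $g$ of such a double coset, I would exhibit the preimage explicitly: take $Z=G/(H\cap{}^gK)$ with $b(x(H\cap{}^gK))=xH$ and $a(x(H\cap{}^gK))=xgK$; this is a transitive $G$-set over $X\times Y$ (the $G$-orbit of $(H,gK)$ in $X\times Y$). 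A direct computation shows $b^*(H)=\sum_{h\in H/(H\cap{}^gK)} h(H\cap{}^gK)$ and then $a_*b^*(H)=\sum_{h\in H/(H\cap{}^gK)} hgK$, which is exactly the basis element of $R[G/K]^H$ attached to the double coset $HgK$. As $g$ ranges over representatives of $H\backslash G/K$, this produces the full basis of the Hom space, proving surjectivity.

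The step I expect to be slightly delicate is the explicit identification of the transitive $G$-sets over $G/H\times G/K$ with double cosets and the verification that the computation of $a_*b^*$ yields precisely the standard fixed-point basis element; this is a classical but bookkeeping-heavy computation involving the relation between the stabilizer $H\cap{}^gK$ of $(H,gK)$ and the $H$-orbit $HgK/K\subset G/K$. Everything else is formal manipulation of $\Pi_*$ and $\Pi^*$.
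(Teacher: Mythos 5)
Your proof is correct and follows essentially the same route as the paper: reduce by additivity to transitive $G$-sets and exhibit the explicit spans $G/H \leftarrow G/(H\cap{}^{g}K)\rightarrow G/K$ whose images under $p_L$ are the standard basis of $\mathrm{Hom}_{RG}(R[G/H],R[G/K])$ indexed by double cosets $H\backslash G/K$. The only difference is that where the paper invokes Lemma~3.1 of Yoshida's paper for the fact that these maps form an $R$-basis, you rederive that fact directly from the isomorphism $\mathrm{Hom}_{RG}(R[G/H],R[G/K])\cong \bigl(R[G/K]\bigr)^{H}$ together with the orbit-sum basis of the $H$-fixed points, which makes the argument self-contained.
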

\begin{proof}
By additivity, it is enough to check the result for two transitive $G$-sets. Let $H$ and $K$ be two subgroups of $G$. Let us suppose that $X=G/H$ and $Y=G/K$. Let $Z_{H,K,x}$ be the following $G$-set over $G/H\times G/K$:
\begin{equation}\label{ba}
\xymatrix{
& G/H\cap\ ^{x}K\ar[ld]_{\pi^{H}_{H\cap{\ ^{x}K}}} \ar[dr]^{\ \ \pi^{K}_{K\cap H^{x}} \circ \gamma_{H^{x}\cap K,x}} & \\
G/H & & G/K
}
\end{equation}
where the maps denoted by $\pi$ are the natural projections and the map denoted by $\gamma_{H^x\cap K,x}$ is defined by $$\gamma_{H^{x}\cap K,x}(gH\cap\ ^xK)= gxH^{x}\cap K.$$ 
Then one can check that: $$p_{L}(Z_{H,K,x})(gH)=\sum_{h\in [H/H\cap K^{x}]} ghxK.$$ Moreover, the isomorphism class of this $G$-set over $G/H\times G/K$ depends only on the double coset $HxK$. We will still denote by $Z_{H,K,x}$ the image of this $G$-set in the Burnside group $RB(G/H\times G/K)$. 
\newline The result now follows from Lemma $3.1$ of \cite{yoshida_g_functors_II}, which says that the set of morphisms $p_{L}(Z_{H,K,x})$ when $x$ runs through a set of representatives of the double cosets $H\backslash G/K$ is a $R$-basis of $Hom_{RG}(RG/H,RG/K)$. 
\end{proof}
This linear projection is compatible with the composition of the morphisms in the following sense:
\begin{lemma}
Let $U_{a,b} = (X\overset{b}{\leftarrow} U \overset{a}{\rightarrow} Y)$ be a $G$-set over $X\times Y$. Let $V_{c,d} = (Y\overset{d}{\leftarrow} V \overset{c}{\rightarrow} Z)$ be a $G$-set over $Y\times Z$. Then
\begin{equation*}
p_{L}(X\overset{b}{\leftarrow} U \overset{a}{\rightarrow} Y)\times p_{L}(Y\overset{d}{\leftarrow} V \overset{c}{\rightarrow} Z) = p_{L}(U_{a,b}\times V_{c,d}),
\end{equation*}
where the product $U_{a,b} \times V_{c,d}$ is as in Definition \ref{burnside}, that is the pullback along the morphisms $a$ and $d$.
\end{lemma}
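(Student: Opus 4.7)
The plan is to unfold both sides and reduce the identity to the Mackey-square compatibility of $\Pi$ recorded in the previous lemma. First I would compute the right-hand side. By the product rule of Definition \ref{burnside}, $U_{a,b}\times V_{c,d}$ is the span $X\overset{b\circ p}{\leftarrow} P\overset{c\circ q}{\rightarrow} Z$, where $P$ is the pullback of $a:U\to Y$ and $d:V\to Y$, with structure maps $p:P\to U$ and $q:P\to V$. Applying $p_L$ and using the functoriality of $\Pi^{*}$ (contravariant) and $\Pi_{*}$ (covariant), one obtains
\begin{equation*}
p_{L}(U_{a,b}\times V_{c,d})=(c\circ q)_{*}\circ(b\circ p)^{*}=c_{*}\circ q_{*}\circ p^{*}\circ b^{*}.
\end{equation*}

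Next I would compute the left-hand side. Recalling that the product in the Yoshida algebra (and more generally in the composition sense used here) is defined by $f\times g=g\circ f$, one finds
\begin{equation*}
p_{L}(U_{a,b})\times p_{L}(V_{c,d})=(c_{*}\circ d^{*})\circ(a_{*}\circ b^{*})=c_{*}\circ d^{*}\circ a_{*}\circ b^{*}.
\end{equation*}

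Thus the identity reduces to showing the equality of the two middle factors, namely $q_{*}\circ p^{*}=d^{*}\circ a_{*}$. This is precisely the content of the Mackey property of $\Pi$ proved in the previous lemma, applied to the pullback square
\begin{equation*}
\xymatrix{
P\ar[r]^{p}\ar[d]^{q} & U\ar[d]^{a}\\
V\ar[r]^{d} & Y,
}
\end{equation*}
which expresses the pullback of $a$ and $d$ (possibly after interchanging the roles of the two projections in the statement of the previous lemma to get the equality in the orientation we need). Once this is in place, the two expressions coincide, finishing the proof.

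There is no real obstacle: the argument is entirely formal once one is careful about the convention $f\times g=g\circ f$ for the product in the Burnside algebra and the orientation of the pullback square one plugs into the Mackey identity of $\Pi$. The only thing to double-check is that the pullback defining $U_{a,b}\times V_{c,d}$ in Definition \ref{burnside} is taken along the correct pair of maps ($a$ and $d$, the two arrows landing in $Y$), which is what makes the Mackey-square identity applicable.
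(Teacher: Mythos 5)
Your proof is correct and follows exactly the paper's route: the paper's own proof is the single sentence ``this follows from the pullback property of the bivariant functor $\Pi$'', and your argument is precisely that reduction, spelled out (unfolding both sides via functoriality of $\Pi_{*}$ and $\Pi^{*}$ and applying the Mackey-square identity $q_{*}\circ p^{*}=d^{*}\circ a_{*}$ to the pullback of $a$ and $d$). Nothing is missing; you have simply made explicit the details the paper leaves to the reader.
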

\begin{proof}
This follows from the pullback property of the bivariant functor $\Pi = (\Pi^{*},\Pi_{*})$.
\end{proof}
\begin{theo}[Yoshida's Theorem for cohomological Mackey algebra]\label{yo1}\label{yoshida_alg}
Let $G$ be a finite group and $R$ be a commutative ring with unit. Then, there is an isomorphism of algebras $\phi : co\mu_{R}(G)\to End_{RG}(R\Omega_{G})$, which makes the following diagram commutative:
\begin{equation}\label{dia}
\xymatrix{ 
\mu_{R}(G)\ar[d]^{p}\ar[rr]^{\beta} && RB(\Omega_{G}\times \Omega_{G})\ar[d]^{p_{L}}\\
co\mu_{R}(G) \ar[rr]^{\phi} && End_{RG}(R\Omega_{G}).
}
\end{equation}
Here, the map $p:\mu_{R}(G)\to co\mu_{R}(G)$ is the natural projection. The map $$\beta: \mu_{R}(G) \to RB(\Omega_{G}\times \Omega_{G}),$$ is the isomorphism of Proposition \ref{burnside_alg}, and $p_{L}$ is the map of Lemma \ref{lin_proj}.
\end{theo}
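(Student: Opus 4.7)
The plan is to construct $\phi$ through the universal property of the cohomological quotient $p$ and then verify it is an algebra isomorphism in several steps. Observe that $p_L \circ \beta : \mu_R(G) \to End_{RG}(R\Omega_G)$ is already a ring homomorphism, because $\beta$ is a ring isomorphism by Proposition \ref{burnside_alg} and $p_L$ is multiplicative by the compatibility lemma following Lemma \ref{lin_proj}. It is moreover surjective, since $\beta$ is bijective and $p_L$ is surjective. Thus, to obtain $\phi$ making Diagram \ref{dia} commute, it suffices to show that $\ker(p) \subseteq \ker(p_L \circ \beta)$; as $\ker(p)$ is the two-sided ideal generated by $\{t_K^H r_K^H - |H:K|\,t_H^H : K \leqslant H \leqslant G\}$ and the kernel of a ring homomorphism is a two-sided ideal, this reduces to checking the generators.

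For each such generator I would use the formulas of Proposition \ref{burnside_alg}: $\beta(r_K^H) = (G/K = G/K \xrightarrow{\pi^H_K} G/H)$ and $\beta(t_K^H) = (G/H \xleftarrow{\pi^H_K} G/K = G/K)$. Their Burnside product is obtained by pulling back along the matching identity maps on $G/K$, giving $(G/H \xleftarrow{\pi^H_K} G/K \xrightarrow{\pi^H_K} G/H)$. Applying $p_L$ yields the endomorphism $(\pi^H_K)_* \circ (\pi^H_K)^*$ of $RG/H$, which sends $gH$ to $\sum_{h \in [H/K]} ghH = |H:K|\,gH$. This is exactly $|H:K|$ times the identity on the $RG/H$ summand of $R\Omega_G$, and equals $p_L(\beta(|H:K|\,t_H^H))$. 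Hence the difference vanishes, yielding a well-defined algebra homomorphism $\phi$ fitting into Diagram \ref{dia}, and surjectivity of $\phi$ descends from that of $p_L \circ \beta$.

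The main obstacle is injectivity, which I would establish by a rank argument. Starting from the basis of $\mu_R(G)$ in Proposition \ref{basis}, set $M_x := H \cap {}^xL$ for each pair of subgroups $(H, L)$ and each $x \in [H \backslash G / L]$. Using the transfer transitivity $t_K^H = t_{M_x}^H t_K^{M_x}$, the conjugation rules moving $x$ past transfer and restriction, and the cohomological relation $t_K^{M_x} r_K^{M_x} = |M_x : K|\,t_{M_x}^{M_x}$, every basis element $t_K^H x r_{K^x}^L$ with $K \subsetneq M_x$ can be rewritten modulo the cohomological ideal as an $R$-multiple of $t_{M_x}^H x r_{M_x^x}^L$. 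Therefore $co\mu_R(G)$ is spanned by the family $\overline{t_{M_x}^H x r_{M_x^x}^L}$ indexed by triples $(H, L, x)$ with $x$ running through $H \backslash G / L$; this spanning family has cardinality $n := \sum_{H, L \leqslant G} |H \backslash G / L|$. The target $End_{RG}(R\Omega_G) = \bigoplus_{H, L \leqslant G} Hom_{RG}(RG/H, RG/L)$ is free of the same rank $n$, since $Hom_{RG}(RG/H, RG/L)$ has an $R$-basis indexed by $H \backslash G / L$ (Lemma $3.1$ of \cite{yoshida_g_functors_II}, recalled in the proof of Lemma \ref{lin_proj}). Composing a presentation $R^n \to co\mu_R(G)$ with the surjection $\phi$ produces a surjective endomorphism of $R^n$, which is bijective for any commutative ring $R$; hence both maps are isomorphisms and $\phi$ is an isomorphism.
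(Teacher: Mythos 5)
Your proposal is correct, and the first half (defining $\phi$ by factoring $p_{L}\circ\beta$ through $p$, checking the cohomological generators $t^{H}_{K}r^{H}_{K}-|H:K|t^{H}_{H}$ land in the kernel, and inheriting surjectivity from $p_{L}$) coincides with the paper's argument, though you spell out the generator computation that the paper only asserts. Where you genuinely diverge is injectivity. The paper constructs an explicit inverse $\psi$: given $f\in End_{RG}(R\Omega_{G})$ it chooses a preimage $Z(f)$ under $p_{L}$, sets $\psi(f)=p\circ\beta^{-1}(Z(f))$, and verifies well-definedness by expanding an element of $\ker p_{L}$ in the standard basis $Z_{H,K,L,x}$ and using the relation $p(t^{H}_{{}^{x}L}xr^{K}_{L})=|K\cap{}^{x}H:L|\,p(t^{H}_{H\cap{}^{x}K}xr^{K}_{K\cap H^{x}})$ together with Yoshida's linear independence. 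You instead run a counting argument: the same reduction relation shows $co\mu_{R}(G)$ is spanned by $n=\sum_{H,L}|H\backslash G/L|$ elements, the target is $R$-free of rank $n$ by Yoshida's Lemma $3.1$, and a surjective endomorphism of a finitely generated module over a commutative ring is injective. Both routes rest on exactly the same two ingredients (the index-reduction relation in the quotient and Yoshida's basis), so neither is cheaper in substance; the paper's version buys an explicit formula for $\phi^{-1}$, while yours buys a cleaner logical structure and delivers the subsequent Corollary (that the $\overline{t^{H}_{H\cap{}^{x}K}xr^{K}_{K\cap H^{x}}}$ form an $R$-basis of $co\mu_{R}(G)$) for free, since your spanning set of cardinality $n$ must then be a basis. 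The appeal to the surjective-endomorphism fact is legitimate for an arbitrary commutative ring with unit (determinant trick/Cayley--Hamilton), so no hypothesis of the theorem is violated.
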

\begin{proof}
The isomorphism $\phi : co\mu_{R}(G)\to End_{RG}(R\Omega_{G})$ is defined as follows:\\ let $x\in \mu_{R}(G)$. Then $$\phi(p(x)):= p_{L}(\beta(x)).$$
\begin{itemize}
\item The morphism $\phi$ is well defined since $p_{L}\big(\beta(t^{K}_{H}r^{K}_{H})\big) = |K:H|p_{L}\big(\beta(t_{K}^{K})\big)$.
\item Since $p_{L}$ and $\beta$ are two morphisms of algebras, the map $\phi$ is a morphism of algebras. 
\end{itemize}
 On the other hand, the map $\psi : End_{RG}(R\Omega_{G})\to co\mu_{R}(G)$ is defined  as follows. Let $f \in End_{RG}(R\Omega_{G})$, then by Lemma \ref{lin_proj}, there exist a $G$-set $Z(f)$ over $\Omega_{G}\times \Omega_{G}$ such that $f = p_{L}(Z(f))$. Then $\psi$ is defined by: $$\psi(f) = p\circ \beta^{-1}(Z(f)).$$
 \begin{itemize}
 \item The map $\psi$ is well defined: if $Z$ is a $G$-set over $\Omega_{G}\times \Omega_{G}$ such that $p_{L}(Z)=0$, then we can express $Z$ in the usual basis of $RB(\Omega_{G}^2)$, that is the basis induced by the isomorphism $\beta$ and the usual basis of the Mackey algebra (see Proposition \ref{basis}). This basis is indexed by $H$ and $K$ two subgroups of $G$, an element $x$ of  the set of representatives of the double cosets $H\backslash G/K$ and a subgroup $L$ of $K\cap\ ^{x}H$ (up to conjugacy class). We denote by $I$ the set indexing this basis, and we denote by $Z_{H,K,L,x}$ the corresponding basis element. There are elements $\lambda_{H,K,L,x}$ of $R$ such that $Z= \sum_{I}\lambda_{H,K,L,x} Z_{H,K,L,x}$. Then $p_{L}(Z)=0$ if and only if for every $H$, $K$, we have:
\begin{equation*}
\sum_{L,x} \lambda_{H,K,L,x} p_{L}(Z_{H,K,L,x}) = 0.
\end{equation*} 
Let us recall the definition of $Z_{H,K,L,x}$:
\begin{equation*}
\xymatrix{
& G/L\ar[ld]_{\pi^{H}_{L}}\ar[dr]^{\pi^{K}_{L^x}\gamma_{L^x,x}} & \\
G/H & & G/K
}
\end{equation*}
But,
\begin{align*}
p_{L}(Z_{H,K,L,x}) &= \sum_{h\in [H/L]} ghxK \\&= |H\cap\ ^{x}K : L| \sum_{h \in [H/ H\cap\ ^{x}K]} ghxK\\ &= |H\cap\ ^{x}K : L| p_{L}(Z_{H,K,H\cap\ ^{x}K,x}).
\end{align*}
Moreover, the set of maps $p_{L}(Z_{H,K,H\cap\ ^{x}K,x})$ is, by Lemma \ref{lin_proj}, a basis set of $Hom_{RG}(RG/H,RG/K)$, so if $p_{L}(Z)=0$, we have, for $H$ and $K$ subgroups of $G$ and $x\in [H\backslash G/K]$:
\begin{equation*}
\sum_{L} |K\cap\ ^{x}H : L| \lambda_{H,K,L,x} = 0. 
\end{equation*}
Since in the cohomological Mackey algebra we have: $$p(t^{H}_{\ ^{x}L}xr^{K}_{L})=|K\cap\ ^{x}H : L|p(t^{H}_{H\cap\ ^{x}K} x r^{K}_{K\cap H^{x}}),$$ then, if $p_{L}(Z)=0$, we have $p\beta^{-1}(Z)=0$.
\item Since $\beta^{-1}$ and $p$ are two morphisms of algebras, the map $\psi$ is a morphism of algebras. 
\end{itemize}
The fact that $\phi$ and $\psi$ are two inverse isomorphisms follows from the fact that $\beta$ is an isomorphism. 
 \end{proof}
As immediate corollary, we have:
\begin{coro}
Let $G$ be a finite group and $R$ be a commutative ring with unit. 
Then, the set of $\overline{t^{H}_{H\cap\ ^{x}K} c_{K\cap H^x,x} r^{K}_{K\cap H^{x}}}$, when $H$ and $K$ run through the subgroups of $G$ and $x$ runs through a set of representatives of double cosets $H\backslash G/ K$ is an  $R$-basis of $co\mu_{R}(G)$. 
\end{coro}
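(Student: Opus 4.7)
The strategy is to transport the basis of $\mathrm{End}_{RG}(R\Omega_G)$ built out of the Yoshida $G$-sets $Z_{H,K,x}$ back to $co\mu_R(G)$ through the isomorphism $\phi$ of Theorem \ref{yoshida_alg}. Everything reduces to identifying the element of $\mu_R(G)$ that $\beta$ sends to $Z_{H,K,x}$, and then applying the projection $p$.

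First I would observe that $R\Omega_G = \bigoplus_{H\leqslant G} RG/H$, hence $\mathrm{End}_{RG}(R\Omega_G) = \bigoplus_{H,K\leqslant G} \mathrm{Hom}_{RG}(RG/H,RG/K)$. By the quoted Lemma 3.1 of \cite{yoshida_g_functors_II} recalled inside the proof of Lemma \ref{lin_proj}, for each pair of subgroups $H,K$ the morphisms $p_L(Z_{H,K,x})$, where $x$ runs through $[H\backslash G/K]$, form an $R$-basis of $\mathrm{Hom}_{RG}(RG/H,RG/K)$. Summing, the family $\{p_L(Z_{H,K,x})\}_{H,K,x}$ is an $R$-basis of $\mathrm{End}_{RG}(R\Omega_G)$. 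Since $\phi$ is an algebra isomorphism, the family $\{\phi^{-1}(p_L(Z_{H,K,x}))\}$ is an $R$-basis of $co\mu_R(G)$, and by commutativity of diagram \eqref{dia} in Theorem \ref{yoshida_alg}, this coincides with $\{p(\beta^{-1}(Z_{H,K,x}))\}$.

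The heart of the proof is therefore the purely combinatorial identification of $\beta^{-1}(Z_{H,K,x})$. I would compute directly in $RB(\Omega_G^2)$ the product $\beta(t^H_{H\cap{^xK}})\cdot \beta(c_{x,H^x\cap K})\cdot \beta(r^K_{H^x\cap K})$, using the explicit formulas for $\beta$ on generators given in Proposition \ref{burnside_alg} together with the pullback product on $RB(\Omega_G^2)$. The first two spans share the identity map $G/(H\cap{^xK})=G/(H\cap{^xK})$, and the last two share the identity $G/(H^x\cap K)=G/(H^x\cap K)$, so both pullbacks are trivial; the resulting span is
\begin{equation*}
\xymatrix{
& G/(H\cap{^xK}) \ar[ld]_{\pi^H_{H\cap{^xK}}} \ar[rd]^{\pi^K_{H^x\cap K}\circ\gamma_{H^x\cap K,x}} & \\
G/H & & G/K
}
\end{equation*}
which is exactly $Z_{H,K,x}$. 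Thus $\beta^{-1}(Z_{H,K,x}) = t^H_{H\cap{^xK}}\, c_{x,H^x\cap K}\, r^K_{H^x\cap K}$ in $\mu_R(G)$, and applying $p$ yields the claimed generators of $co\mu_R(G)$.

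Putting these two steps together gives the result. The main obstacle is purely bookkeeping: keeping the conjugation conventions $H\cap{^xK}$ versus $H^x\cap K$ straight through the pullback calculation, and matching the direction of $\gamma_{H^x\cap K,x}$. Once that identification is made, linear independence and spanning of the proposed family follow for free from the isomorphism $\phi$ and from Yoshida's basis for the Hom-spaces.
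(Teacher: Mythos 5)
Your argument is correct and is essentially the paper's own proof: the corollary is obtained by transporting Yoshida's basis $\{p_L(Z_{H,K,x})\}$ of $\mathrm{End}_{RG}(R\Omega_G)$ back through the isomorphism $\phi$ of Theorem \ref{yoshida_alg}. You simply make explicit the span computation showing $\beta\bigl(t^H_{H\cap{}^xK}\,c_{x,H^x\cap K}\,r^K_{H^x\cap K}\bigr)=Z_{H,K,x}$, which the paper leaves implicit, and your bookkeeping of the conjugation conventions checks out against equation \eqref{ba}.
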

\begin{proof}
This follows from the fact that this set is the image of the $R$-basis of $End_{RG}(R\Omega_{G})$ of Lemma \ref{lin_proj} introduced by Yoshida.
\end{proof}
 Now, using Dress' point of view, we have:
 \begin{theo}[Yoshida's Theorem for cohomological Mackey functors]\label{yoshida_functor}
 Let $G$ be a finite group and $R$ be a commutative ring with unit. We denote by $Fun_{R}(G)$ the category of $R$-linear contravariant functors from the category of finitely generated permutation $RG$-modules to the category of $R$-modules. Then
 \begin{equation*}
 Comack_{R}(G)\cong Fun_{R}(G).
 \end{equation*}
 \end{theo}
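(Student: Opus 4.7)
The plan is to combine Theorem \ref{yoshida_alg} with a standard Morita-type argument for categories of additive functors. First, I observe that the category $\mathcal{P}$ of finitely generated permutation $RG$-modules is the idempotent-complete additive hull of the full subcategory with the single object $R\Omega_{G}$: indeed, $R\Omega_{G}=\bigoplus_{H\leqslant G}R(G/H)$ contains every transitive permutation module as a direct summand, and every finitely generated permutation module is a finite direct sum of such transitive ones. Since $R$-$Mod$ is an idempotent-complete abelian category, an $R$-linear contravariant functor from the one-object category $\{R\Omega_{G}\}$ to $R$-$Mod$ extends essentially uniquely to $\mathcal{P}$, and conversely; hence the evaluation $F\mapsto F(R\Omega_{G})$ with right action $m\cdot f:=F(f)(m)$ yields an equivalence between $Fun_{R}(G)$ and the category of right modules over $End_{RG}(R\Omega_{G})$ endowed with its usual composition product.

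Next, Theorem \ref{yoshida_alg} provides an algebra isomorphism $\phi : co\mu_{R}(G)\to End_{RG}(R\Omega_{G})$ in which the target carries the Yoshida multiplication $f\times g=g\circ f$. In other words, $\phi$ identifies $co\mu_{R}(G)$ with the opposite of the usual endomorphism ring, so the category $Comack_{R}(G)$ of left $co\mu_{R}(G)$-modules is naturally equivalent to the category of right $End_{RG}(R\Omega_{G})$-modules. Composing the two equivalences yields the desired equivalence $Comack_{R}(G)\cong Fun_{R}(G)$.

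The only point that requires real care is the bookkeeping of opposite rings and contravariance, together with the passage from the single object $R\Omega_{G}$ to the full category $\mathcal{P}$ via direct sums and direct summands. Concretely, one should verify that under the composite equivalence, the value $F(R(G/H))$ of a functor recovers the evaluation $M(H)$ of the associated Mackey functor, and that the basis morphisms $p_{L}(Z_{H,K,x})$ of $Hom_{RG}(R(G/H),R(G/K))$ from Lemma \ref{lin_proj} act via $F$ as the expected composites of restriction, induction and conjugation maps. This compatibility is built into the construction of $\phi$, so once it is checked on generators there is nothing further to verify; this check on generators is the main, essentially routine, obstacle.
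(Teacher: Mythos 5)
Your argument is correct in substance, but it follows a genuinely different route from the paper. The paper proves this theorem by writing down explicit quasi-inverse functors: the Yoneda-type functor $Y(M)=Hom_{Comack_{R}(G)}(-,M)\circ FP_{-}$ and the precomposition functor $\Gamma(F)=F\circ \Pi$, together with the units and co-units of the adjunctions $\Gamma\dashv Y$ and $Y\dashv\Gamma$; this explicitness is not decorative, since those units and co-units are reused later (in the block version, Theorem \ref{yoshida}, and in Lemma \ref{prop_fun}). You instead run a Morita/Freyd-type argument: $R\Omega_{G}$ is an additive generator of $perm_{R}(G)$, so evaluation at $R\Omega_{G}$ identifies $Fun_{R}(G)$ with right modules over $(End_{RG}(R\Omega_{G}),\circ)$, and Theorem \ref{yoshida_alg} (whose product is $f\times g=g\circ f$, i.e.\ the opposite ring) converts these into left $co\mu_{R}(G)$-modules, hence into $Comack_{R}(G)$ via the equivalence asserted in Section 2.1. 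This is shorter and more conceptual, at the cost of losing the explicit description of the equivalence on objects and morphisms. One imprecision to fix: $perm_{R}(G)$ is \emph{not} the idempotent-complete additive hull of $\{R\Omega_{G}\}$ — that hull is $perm_{R}^{+}(G)$, the permutation projective modules, and the paper is careful to distinguish the two. What you actually need, and what is true, is only that every finitely generated permutation module is a direct summand of a finite direct sum of copies of $R\Omega_{G}$ and that $R$-$Mod$ is idempotent complete, so that restriction of additive functors along $\{R\Omega_{G}\}\hookrightarrow perm_{R}(G)$ is an equivalence; with that correction your proof stands.
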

 \begin{proof}[Sketch of proof]
 This equivalence of categories can be constructed as follows: There is a Yoneda functor $Y$ from $Comack_{R}(G)$ to $Fun_{R}(G)$. More precisely, if $M$ is a cohomological Mackey functor, then $Y(M)$ is defined by: $$ Y(M) = Hom_{Comack_{R}(G)}( - , M) \circ FP_{-},$$ where $FP_{-}$ is the functor from the category of permutation $RG$-modules to the category of cohomological Mackey functors sending the $RG$-module $V$ to the fixed point functor $FP_{V}$. Here $FP_{V}$ is the Mackey functor defined by $$Hom_{RG}(-,V)\circ \Pi. $$ That is $FP_{V}(X)=Hom_{RG}(\Pi(X),V)$ for a finite $G$-set $X$.\\ On the other hand, if $F\in Fun_{R}(G)$, then $\Gamma$ is defined by: $\Gamma(F)=F\circ \Pi$.
 \newline Let us recall the units and co-units of the two pairs of adjoint functors $\Gamma\dashv Y$ and $Y\dashv\Gamma$. 
 \begin{itemize}
 \item For the adjunction $\Gamma\dashv Y$ we have: let $F$ be a functor of $Fun_{R}(G)$. The unit $\delta$ of this adjunction is the natural transformation defined by: let $V=RX$ be a permutation $RG$-module and $u\in F(RX)$. Let $Z$ be a finite $G$-set. Then,
 \begin{align*}
 \delta_{F}(V)(u)_{Z} : Hom_{RG}(RZ,R&X) \to F(RZ)\\
 & \alpha \mapsto F(\alpha)(u).
 \end{align*}
Let $M$ be a cohomological Mackey functor. The co-unit of this adjunction is the map $\epsilon_{M} : \Gamma \circ Y(M) \to M$ defined by: let $X$ be a finite $G$-set. Then,
\begin{align*}
\epsilon_{M}(X) : Hom_{Comack{R}(G)}(FP_{RX}&,M)\to M(X) \\
\alpha&\mapsto \alpha_{X}(Id_{RX}).
\end{align*}
\item For the second adjunction $Y\dashv\Gamma$, we have: let $F$ be a functor of $Fun_{R}^{+}(G)$ and let $M$ be a cohomological Mackey functor. Then the co-unit $\epsilon'$ of this adjunction is defined as follows. Let $X$ be a finite $G$-set. Then:
\begin{align*}
\epsilon'_{F}(X) : Hom_{Comack_{R}(G)}(FP_{RX}&,\Gamma(F))\to F(RX)\\
\phi &\mapsto \phi_{X}(Id_{RX}).
\end{align*}
For the unit it is a bit more complicate. Let $X$ and $Y$ be two finite $G$-sets. Let $m\in M(X)$. \\ Let $f_{Y}\in Hom_{RG}(Y,X)$. Then by Lemma \ref{lin_proj}, there exist a $G$-set $$(Y\overset{b}{\leftarrow} U \overset{a}{\rightarrow} X)$$ over $Y\times X$, denoted by $Z_{U,a,b}$, such that $$f_{Y} = p_{L}(Z_{U,a,b}).$$
The unit of this adjunction is:
\begin{align*}
\delta'_{M}(X) : M(&X) \to Hom_{Comack_{R}(G)}(FP_{X},M)\\
& m\mapsto \Big(  f_{Y} \mapsto M_{*}(b)\circ M^{*}(a)(m)\Big).
\end{align*}
Since $M$ is a cohomological Mackey functor, if $Z_{V,c,d}$ is another $G$-set over $Y\times X$ such that $f_{Y} = p_{L}(Z_{V,c,d})$, then $M_{*}(d)M^{*}(c)=M_{*}(b)M^{*}(a)$ (by the proof of Theorem \ref{yoshida_alg}), so the co-unit is well defined.
\end{itemize} 
\end{proof}
Let us denote by $perm_{R}(G)$ the full subcategory of $RG$-$Mod$ consisting of the finitely generated permutation $RG$-modules.
\begin{lemma}
The idempotent completion of $perm_{R}(G)$ is equivalent to the category of finitely generated permutation projective $RG$-modules. 
\end{lemma}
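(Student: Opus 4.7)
The plan is to exhibit an explicit equivalence between the two categories in the standard way one identifies the Karoubi envelope of a category of objects with the category of their direct summands.

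First I would recall the construction of the idempotent completion $\widetilde{perm_R(G)}$: its objects are pairs $(P,e)$ with $P$ a finitely generated permutation $RG$-module and $e \in \mathrm{End}_{RG}(P)$ an idempotent, and $\mathrm{Hom}\bigl((P,e),(Q,f)\bigr) = \{\, \alpha \in \mathrm{Hom}_{RG}(P,Q) \mid \alpha e = \alpha = f\alpha \,\}$. Then I define a functor
\begin{equation*}
\Phi : \widetilde{perm_R(G)} \longrightarrow \{\text{finitely generated permutation projective } RG\text{-modules}\}
\end{equation*}
by $\Phi(P,e) = e(P) = \mathrm{Im}(e)$, and on morphisms by restriction: an arrow $\alpha : (P,e) \to (Q,f)$ induces a well-defined $RG$-linear map $e(P) \to f(Q)$ because $\alpha e = f\alpha = \alpha$. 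Functoriality is immediate.

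Next I would check that $\Phi$ is essentially surjective: by definition a finitely generated permutation projective module $M$ is a direct summand of some finitely generated permutation module $P$, so there is an idempotent $e \in \mathrm{End}_{RG}(P)$ with $e(P) \cong M$, giving $\Phi(P,e) \cong M$. For full faithfulness, let $(P,e)$ and $(Q,f)$ be two objects; any $RG$-linear map $\beta : e(P) \to f(Q)$ can be extended by zero on a complement to a map $\widetilde{\beta} : P \to Q$ satisfying $\widetilde{\beta} e = \widetilde{\beta}$ and $f \widetilde{\beta} = \widetilde{\beta}$, so $\widetilde{\beta}$ is a morphism in the idempotent completion and clearly restricts to $\beta$. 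Conversely two morphisms in $\widetilde{perm_R(G)}$ with the same restriction to $e(P)$ are equal because any $\alpha$ with $\alpha e = \alpha$ is determined by its restriction to $e(P)$.

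The main (in fact only) potentially tricky point is purely formal and lies in checking that the target category is honestly idempotent complete, so that one does not need to pass to any further completion: if $M$ is a summand of a permutation module and $e \in \mathrm{End}_{RG}(M)$ is idempotent, then $e(M)$ is a summand of $M$ hence a summand of a permutation module, hence still permutation projective. Combined with the description of $\Phi$ above and the universal property of the Karoubi envelope, this yields the claimed equivalence.
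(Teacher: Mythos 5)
Your proof is correct and follows exactly the same route as the paper, which simply defines the functor $(V,\pi)\mapsto\pi(V)$ and asserts it is dense and fully faithful; you have merely filled in the routine verifications. The remark about the target being idempotent complete is harmless but not needed, since full faithfulness plus essential surjectivity already give the equivalence.
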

\begin{proof}
Let us denote temporarily by $\mathcal{A}$ the category of permutation projective $RG$-modules. Let $perm_{R}^{+}(G)$ be the idempotent completion of $perm_{R}(G)$.\\ The objects of this category are the pairs $(V,\pi)$ where $V$ is a permutation module and $\pi\in Hom_{perm_{R}(G)}(V,V)$ an idempotent. There is a natural functor $F$ from $perm_{R}^{+}(G)$ to $\mathcal{A}$ defined by $F(V,\pi)=\pi(V)$. This functor is dense and fully faithful. 
\end{proof}
We denote by $perm^{+}_{R}(G)$ the category of finitely generated permutation projective $RG$-modules and by $Fun^{+}_{R}(G)$ the category consisting of contravariant functors from $perm^{+}_{R}(G)$ to $R$-$Mod$. By general properties of the idempotent completion (\cite{groth_topos} Exemple 8.7.8 page 97.), the categories $Fun_{R}^{+}(G)$ and $Fun_{R}(G)$ are equivalent. So we have: 
\begin{equation*}
Comack_{R}(G)\cong Fun_{R}^{+}(G). 
\end{equation*}
We still denote by $Y\dashv \Gamma$ the equivalence after idempotent completion. 
\section{The center of the cohomological Mackey algebra.}
\begin{de}
Let $\mathcal{C}$ be a (small) additive category. The center of $\mathcal{C}$, denoted by $Z(\mathcal{C})$, is the endomorphism ring of the identity functor $Id_{\mathcal{C}}$ of the category~$\mathcal{C}$. 
\end{de}
It is well known that the definition of the center of a category is functorial in respect with the equivalences of categories. Since we were not able to find a reference for this fact, we sketch the proof. 
\begin{lemma}\label{fun_center}
Let $\mathcal{C}$ and $\mathcal{D}$ be two additive categories. Let $F\dashv G$ be an equivalence between $\mathcal{C}$ and $\mathcal{D}$. Then:
\begin{enumerate}
\item The functor $F$ induces a ring homomorphism from $Z(\mathcal{C})$ to $Z(\mathcal{D})$, denoted by $f$. 
\item The functor $G$ induces a ring homomorphism from $Z(\mathcal{D})$ to $Z(\mathcal{C})$, denoted by $g$.
\item The two homomorphisms $f$ and $g$ are inverse isomorphisms. 
\end{enumerate}
\end{lemma}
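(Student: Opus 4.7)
The plan is to use the unit $\eta : Id_{\mathcal{C}}\to GF$ and the counit $\epsilon : FG\to Id_{\mathcal{D}}$ of the equivalence; both are natural isomorphisms. Given $\alpha\in Z(\mathcal{C})$, the only reasonable candidate for $f(\alpha)\in Z(\mathcal{D})$ is obtained by first whiskering $\alpha$ with $F$ and $G$ to get a natural endomorphism of $FG$, then transporting it along $\epsilon$ to a natural endomorphism of $Id_{\mathcal{D}}$. Concretely, for $Y\in\mathcal{D}$, set
\begin{equation*}
f(\alpha)_{Y} := \epsilon_{Y}\circ F(\alpha_{GY})\circ \epsilon_{Y}^{-1},
\end{equation*}
and symmetrically, for $\beta\in Z(\mathcal{D})$ and $X\in\mathcal{C}$,
\begin{equation*}
g(\beta)_{X} := \eta_{X}^{-1}\circ G(\beta_{FX})\circ \eta_{X}.
\end{equation*}

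The first step is to verify that $f(\alpha)$ is indeed a natural transformation of $Id_{\mathcal{D}}$. For a morphism $h: Y\to Y'$ in $\mathcal{D}$, I would chain together three naturality squares: first slide $h$ through $\epsilon^{-1}$ (turning it into $FG(h)$), then apply naturality of $\alpha$ to $G(h): GY\to GY'$ (trading $\alpha_{GY'}\circ G(h)$ for $G(h)\circ \alpha_{GY}$), then slide $FG(h)$ back through $\epsilon$. Additivity and the ring-homomorphism property are immediate since $F$ is additive and the $\epsilon_{Y}^{-1}\circ \epsilon_{Y}$ in the middle of $f(\alpha)_{Y}\circ f(\beta)_{Y}$ collapses to the identity, reducing to $f(\alpha\circ\beta)_{Y}$. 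The analogous verification works for $g$.

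Finally I would check that $f$ and $g$ are mutually inverse. Starting from
\begin{equation*}
g(f(\alpha))_{X}=\eta_{X}^{-1}\circ G(\epsilon_{FX})\circ GF(\alpha_{GFX})\circ G(\epsilon_{FX}^{-1})\circ \eta_{X},
\end{equation*}
the triangle identity $G(\epsilon_{FX})\circ \eta_{GFX}=Id_{GFX}$, combined with the fact that $\epsilon$ and $\eta$ are isomorphisms, yields $G(\epsilon_{FX})=\eta_{GFX}^{-1}$. Inserting this and using naturality of $\eta$ at $\alpha_{X}: X\to X$ (which gives $GF(\alpha_{X})\circ \eta_{X}=\eta_{X}\circ \alpha_{X}$, and similarly for $\alpha_{GFX}$), everything telescopes down to $\alpha_{X}$. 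The identity $f\circ g=Id_{Z(\mathcal{D})}$ is proved by the symmetric calculation.

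The bookkeeping is the main obstacle: one has to keep track of where each morphism lives, make sure the right naturality square (of $\eta$, of $\epsilon$, or of $\alpha$) is invoked at each step, and use the triangle identities in the form adapted to the fact that the unit and counit are invertible. There is no conceptual subtlety, but since the proof is omitted from the literature the verification deserves to be written out explicitly.
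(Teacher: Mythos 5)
Your construction is the same as the paper's: you conjugate $F(\alpha_{G(-)})$ by the counit $\epsilon$ of $F\dashv G$ and its inverse, which (for an adjoint equivalence) is exactly the paper's formula $f(\eta)_{D}=\epsilon'_{D}\circ F(\eta_{G(D)})\circ\delta_{D}$ written with the unit and counit of the two adjunctions $F\dashv G$ and $G\dashv F$. The argument is correct, and you in fact carry out the naturality, multiplicativity and mutual-inverse verifications that the paper's sketch omits.
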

\begin{proof}
We denote by $\delta$ (resp. $\delta'$) the unit of the adjunction $G\dashv F$ (resp. $F\dashv G$) and by $\epsilon$ (resp. $\epsilon'$) the co-unit of the adjunction $G\dashv F$ (resp. $F \dashv G$), that is the following natural transformations:
\begin{align*}
\delta &: Id \to FG \\
\epsilon &: GF\to Id \\
\delta' &: Id\to GF\\
\epsilon' &: FG\to Id.
\end{align*}
Let $\eta$ be an endomorphism of $Id_{\mathcal{C}}$. Then, $f(\eta)$ is the natural transformation from the functor $Id_{\mathcal{D}}$ to himself defined as follows: if $D$ is an object of $\mathcal{D}$, then:
\begin{equation*}
f(\eta)_{D} = \epsilon'_{D}\circ F(\eta_{G(D)}) \circ \delta_{D} : D\to FG(D) \to FG(D)\to D.
\end{equation*}
Let $\gamma$  be an endomorphism of $Id_{\mathcal{D}}$. Then $g(\gamma)$ is the natural transformation defined as follows: if $C$ is an object of $\mathcal{C}$, then:
\begin{equation*}
g(\gamma)_{C} = \epsilon_{C} \circ G(\gamma_{G(C)}) \circ \delta'_{C}: C\to GF(C)\to GF(C)\to C.
\end{equation*}
\end{proof}
\begin{re}
The definition of the center of an additive category generalized the usual definition of the center of a ring. More precisely, if $R$ is a ring, then the center of the category $R$-$Mod$ is isomorphic to the center of the ring $R$ (see the proof of Proposition $2.2.7$ \cite{benson}).
\end{re}
\begin{prop}\label{form_center}
Let $G$ be a finite group and $R$ be a commutative ring with unit. Then, there is a ring isomorphism: $$\iota : Z(RG)\to Z(co\mu_{R}(G)).$$ If $z=\sum_{x\in G} \lambda_{x} x\in Z(RG)$, then
\begin{equation*}
\iota(z)=\sum_{H\leqslant G} \frac{1}{|H|} \sum_{x\in G}\lambda_{x} \overline{t^{H}_{1} c_{1,x} r^{H}_{1}}.  
\end{equation*}
Here, we denote by $\overline{x}$ the image of $x\in \mu_{R}(G)$ in the cohomological Mackey algebra. 
\end{prop}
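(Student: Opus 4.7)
The plan is to transport the problem through the Yoshida isomorphism $\phi: co\mu_{R}(G)\overset{\sim}{\to} End_{RG}(R\Omega_{G})$ of Theorem \ref{yoshida_alg}. Since $\phi$ is an isomorphism of $R$-algebras, it restricts to an isomorphism of centers, so it suffices to construct a ring isomorphism $Z(RG) \overset{\sim}{\to} Z(End_{RG}(R\Omega_{G}))$ and then translate back through $\phi^{-1}$ to read off the stated formula.

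For the forward direction, I would send $z\in Z(RG)$ to the endomorphism $L_{z}$ of $R\Omega_{G}$ which acts on each summand $R(G/H)$ by left multiplication by $z$. Centrality of $z$ guarantees that $L_{z}$ is $RG$-linear, and any morphism $f \in End_{RG}(R\Omega_{G})$ is built from $RG$-linear maps between permutation modules, all of which commute with multiplication by a central element. Hence $L_{z}$ lies in the center, and $z \mapsto L_{z}$ is a ring homomorphism.

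For the inverse, I would observe that a central element $\zeta \in End_{RG}(R\Omega_{G})$ must commute with each of the idempotent projections onto the summands $R(G/H)$ and is therefore block-diagonal. Its $(1,1)$-component lies in $End_{RG}(RG) \cong (RG)^{op}$ and so equals right multiplication $R_{z}$ by some $z \in RG$; commuting with every $R_{g}$ (for $g \in G$, viewed inside $End_{RG}(RG)$) forces $gz=zg$, so $z \in Z(RG)$. To recover the remaining blocks, I would exploit the $RG$-linear surjection $\pi: RG \twoheadrightarrow R(G/H)$: commutativity of $\zeta$ with $\pi$ gives $\zeta_{H} \circ \pi = \pi \circ R_{z} = L_{z} \circ \pi$, and surjectivity of $\pi$ forces $\zeta_{H} = L_{z}$. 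The two maps are mutually inverse by construction.

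Finally, to match the explicit formula, I would compute $\phi(\overline{t^{H}_{1}c_{1,x}r^{H}_{1}})$ via the Burnside span calculus of Proposition \ref{burnside_alg}: iterated pullback produces the span $(G/H \overset{\pi}{\leftarrow} G/1 \to G/H)$ whose right leg is $y \mapsto yxH$, and $p_{L}$ sends this to the endomorphism $gH \mapsto \sum_{h\in H} ghxH$ of $R(G/H)$. Summing over $x$ with coefficients $\lambda_{x}$ and using the centrality of $z$ to pull $\sum_{h\in H} h$ across yields $|H|\cdot L_{z}$ on each $R(G/H)$-block. The main obstacle is justifying the factor $1/|H|$ in the stated formula when $|H|$ is not invertible in $R$; this is resolved by the observation that the element $\sum_{x}\lambda_{x}\overline{t^{H}_{1}c_{1,x}r^{H}_{1}}$ a priori belongs to $|H|\cdot co\mu_{R}(G)$, its $|H|$-th part being precisely the $H$-block of $\iota(z)$ (i.e. the integral element corresponding to $L_{z}$ under $\phi^{-1}$), so the division makes sense inside the integral algebra $co\mu_{R}(G)$.
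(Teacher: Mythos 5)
Your proposal is correct and follows essentially the same route as the paper: both reduce via the Yoshida isomorphism to $Z(End_{RG}(R\Omega_{G}))$, send $z$ to multiplication by $z$ on $R\Omega_{G}$, invert by showing a central endomorphism is block-diagonal with all blocks determined by the $(1,1)$-block through the surjections $RG\twoheadrightarrow R(G/H)$, and then compute $p_{L}$ of the span $(G/H\leftarrow G/1\rightarrow G/H)$ to obtain the factor $|H|$. Your closing observation about the integrality of $\frac{1}{|H|}\sum_{x}\lambda_{x}\overline{t^{H}_{1}c_{1,x}r^{H}_{1}}$ is exactly what the paper establishes in the remark following the proposition.
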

\begin{proof}
The existence of an isomorphism between $Z(RG)$ and $Z(co\mu_{R}(G))$ is due to Bouc (Proposition $12.3.2$ of \cite{bouc_green}). It uses the point of view of Green Mackey functors. More precisely it is based on the fact that cohomological Mackey functors are modules over the Green functor $FP_{R}$ and the fact that the Yoshida algebra is isomorphic to $FP_{R}(\Omega_{G}\times \Omega_{G})$. \newline\indent Here, we give an elementary proof of this result, which allows us to specify an isomorphism. First we prove that $$Z(RG)\cong Z(End_{RG}(R\Omega_{G})).$$ Let $z\in Z(RG)$. Then the multiplication by $z$ on the $RG$-module $R\Omega_{G}$, denoted by $m_{z}(R\Omega_{G})$ is an element of the center of $End_{RG}(R\Omega_{G})$. \\ On the other hand, if $f\in End_{RG}(R\Omega_{G})$ is a central element, for $g\in End_{RG}(R\Omega_{G})$, the following diagram must commutes:
\begin{equation*}
\xymatrix{
R\Omega_{G}\ar[r]^{f}\ar[d]^{g} & R\Omega_{G}\ar[d]^{g}\\
R\Omega_{G}\ar[r]^{f} & R\Omega_{G}
}
\end{equation*}
By taking $g = R\Omega_{G}\twoheadrightarrow RG/H\to RG/H\hookrightarrow R\Omega_{G},$ we see that $f = \sum_{H\leqslant G} \widehat{f_{H,H}}$, where $f_{H,H} \in Z(End_{RG}(RG/H))$, and $\widehat{f_{H,H}}$ is the composite map: $$R\Omega_{G}\twoheadrightarrow RG/H\overset{f_{H,H}}{\to} RG/H\hookrightarrow R\Omega_{G}.$$
By taking $g = \widehat{(\pi_{1}^{H})_{*}}$, where $\pi_{1}^{H}$ is the natural projection $G/1\to G/H$, for $x\in RG/1$ we have $f_{H,H}((\pi^{H}_{1})_{*}(x)) = (\pi^{H}_{1})_{*}\big(f_{1,1}(x)\big)$. That is, if $x=\sum_{g\in [G/H]} \lambda_{g} gH\in RG/H$,
\begin{align*}
f_{H,H}(x) &=\sum_{g\in [G/H]} \lambda_{g}f_{H,H}(gH)\\
&=\sum_{g\in [G/H]} \lambda_{g}(\pi^{H}_{1})_{*}(gf_{1,1}(1)).
\end{align*}
But $f_{1,1}$ is  a central element of $End_{RG}(RG)$. So we have $gf_{1,1}(1)=f_{1,1}(1)g$. And for $x\in RG/H$ we have:
\begin{equation}\label{formule}
f_{H,H}(x) =\sum_{g\in [G/H]} \lambda_{g} f_{1,1}(1)gH = f_{1,1}(1)\cdot x.
\end{equation}
If $f\in Z(End_{RG}(R\Omega_{G}))$, then $z=f_{1,1}(1)\in Z(RG)$. By Formula (\ref{formule}), we have $$m_{f_{1,1}}(R\Omega_{G}) = f,$$ and it is clear that $m_{z}(R\Omega_{G})_{1,1}(1)=z$. 
\newline Since $co\mu_{R}(G)\cong End_{RG}(R\Omega_{G})$ the result of the lemma follows. Moreover, if $z\in Z(RG)$, then $z$ is a linear combination of elements of $G$, that is:
\begin{equation*}
z=\sum_{x\in G}\lambda_{x} x, 
\end{equation*}
where  $\lambda_{x} \in R$ for $x\in G$. 
\newline Now, let $Z_{H,H,x}$ be the following $G$-set over $G/H\times G/H$
\begin{equation*}
\xymatrix{
& G/1\ar[rd]^{\pi^{H}_{1}\gamma_{1,x}}\ar[dl]_{\pi^{1}_{H}}\\
G/H &&  G/H. 
}
\end{equation*}
Then, for $gH\in G/H$, we have:
\begin{align*}
\sum_{x\in G}\lambda_{x} p_{L}(Z_{H,H,x})(gH) &= \sum_{x\in G}\lambda_{x}\sum_{h\in H} ghxH\\
&=\sum_{h\in H} gh(\sum_{x\in G} \lambda_{x}x)H\\
&=\sum_{h\in H} g(\sum_{x\in G}\lambda_{x}x)hH\\
&=|H|(\sum_{x\in G} \lambda_{x}x)g.
\end{align*}
So, we have: $$m_{z}(R\Omega_{G}) = \sum_{H\leqslant G} \frac{1}{|H|} \sum_{x\in G}\lambda_{x} p_{L}(Z_{H,H,x}),$$
By the isomorphism of Theorem $\ref{yo1}$, this endomorphism of $R\Omega_{G}$ is sent to:
\begin{equation*}
\sum_{H\leqslant G} \frac{1}{|H|} \sum_{x\in G}\lambda_{x} \overline{t^{H}_{1} c_{1,x} r^{H}_{1}.} 
\end{equation*}
\end{proof}
\begin{re}
Since there are some denominators, it may not be clear that the formula of Proposition \ref{form_center} is defined for every ring. However if $z = \sum_{x\in G} \lambda_{x} x$, for $H\leqslant G$, we have:
\begin{align*}
 \sum_{x\in G}\lambda_{x} t^{H}_{1} c_{1,x} r^{H}_{1} = \sum_{x\in G}\lambda_{x} |H\cap\ ^{x}H| t^{H}_{H\cap\ ^xH} c_{H\cap H^{x},x} R^{H}_{H\cap H^{x}}.
\end{align*}
Here, in order to simplify the notations, for $x\in \mu_{R}(G)$, we still write $x$ the image of $x$ in $co\mu_{R}(G)$. Moreover we will denote by $x$ the map $c_{H\cap H^x,x}$. Since the basis element $t^{H}_{H\cap\ ^xH} xR^{H}_{H\cap H^x}$ depends only on the double coset $HxH$, we have:
\begin{align*}
 \sum_{x\in G}\lambda_{x} t^{H}_{1} c_{1,x} r^{H}_{1} = \sum_{g \in [H\backslash G/H]}t^{H}_{H\cap\ ^gH} g R^{H}_{H\cap H^g}\Big(\sum_{x\in O(g)} \lambda_{x}|H\cap\ ^{x}H|\Big),
\end{align*}
where $O(g)$ is the orbit of the element $g$ under the action $(h,h').g=hgh'$ for $h$ and $h'\in H$. So, we have:
\begin{align*}
 \sum_{x\in G}\lambda_{x} t^{H}_{1} c_{1,x} r^{H}_{1} = \sum_{g \in [H\backslash G/H]}t^{H}_{H\cap\ ^gH} g R^{H}_{H\cap H^g} |H\cap H^{g}|\Big(\sum_{(h,k)\in H\times H/H\cap\ ^{g}H} \lambda_{hgk}\Big)
\end{align*}
Now since $z$ is an element of the center, for every $h\in H$, we have $hz=zh$, so 
\begin{equation*}
\sum_{x\in G} \lambda_{h^{-1}x}x = \sum_{x\in G} \lambda_{xh^{-1}} x,
\end{equation*}
so for every $h\in H$, we have $\lambda_{xh}= \lambda_{hx}$. Then, we have:
\begin{equation*}
 \sum_{x\in G}\lambda_{x} t^{H}_{1} c_{1,x} r^{H}_{1} =  \sum_{g \in [H\backslash G/H]}t^{H}_{H\cap\ ^gH} g R^{H}_{H\cap H^g} |H\cap H^{g}|\frac{|H|}{|H\cap H^{g}|}\Big(\sum_{h\in H/H\cap\ ^{g}H} \lambda_{gh}\Big)
\end{equation*}
And finally, 
\begin{equation*}
\sum_{H\leqslant G}\frac{1}{|H|}\sum_{x\in G}\lambda_{x} t^{H}_{1} c_{1,x} r^{H}_{1} = \sum_{H\leqslant G} \sum_{g \in [H\backslash G/H]}t^{H}_{H\cap\ ^gH} g R^{H}_{H\cap H^g}\Big(\sum_{h\in H/H\cap\ ^{g}H} \lambda_{gh}\Big).
\end{equation*}
\end{re}
The formula of Propostion \ref{form_center} suggest the following definition. 
\begin{lemma}
Let $G$ be a finite group. Let $\rho : Z(RG) \to Z(Comack_{R}(G))$ be the map defined as follows. let $M$ be a cohomological Mackey functor. Let $X$ be a finite $G$-set and let $z$ be an element of $Z(RG)$. Let $m_{z}(RX)$ be the multiplication by $z$ on $RX$. There is a $G$-set $\big(X\overset{b}{\leftarrow} U \overset{a}{\rightarrow} X\big)$ over $X\times X$ denoted by $Z_{U,a,b}$ for some $G$-set $U$ such that $m_{z}(RX) = p_{L}(Z_{U,a,b})$. Then:
\begin{equation*}
\rho(z)_{M}(X) = M_{*}(b) M^{*}(a):M(X)\to M(X). 
\end{equation*}
\end{lemma}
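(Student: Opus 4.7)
The plan is to verify three things in order. First, that $M_*(b)M^*(a)$ is independent of the choice of $Z_{U,a,b}$ representing $m_{z}(RX)$. Second, that the collection $\{\rho(z)_M(X)\}_X$ defines an endomorphism of $M$ in $Comack_R(G)$, i.e.\ commutes with $M_*(f)$ and $M^*(f)$ for every morphism $f$ of $G$-sets. Third, that this construction is natural in $M$, so that $\rho(z)$ lies in $Z(Comack_R(G))$.

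The main obstacle is the well-definedness step. My approach is to invoke the Yoshida equivalence of Theorem \ref{yoshida_functor}: writing $M \cong \Gamma(F)$ for $F = Y(M) \in Fun_R^+(G)$, we have $M(X) = F(RX)$, and by construction of the equivalence the composite $M_*(b)M^*(a)$ acting on $M(X)$ is the image under $F$ of $a_* \circ b^* = p_L(Z_{U,a,b}) \in End_{RG}(RX)$. Since by hypothesis this endomorphism equals $m_z(RX)$, the induced map $F(m_z(RX))$ depends only on $z$ and $X$. Alternatively, one may argue directly along the lines of the proof of Theorem \ref{yoshida_alg}: after decomposing $X$ into transitive components, the kernel of $p_L$ on each $G/H\times G/K$ consists of combinations whose $\beta^{-1}$-images lie in the kernel of the projection $p:\mu_R(G)\to co\mu_R(G)$, and the cohomological property of $M$ forces these combinations to act by zero.

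For the second step, centrality of $z$ in $RG$ gives $m_z(RY) \circ \Pi_*(f) = \Pi_*(f) \circ m_z(RX)$ and the dual identity for $\Pi^*$. I would translate these into equalities in $RB(Y\times X)$ and $RB(X\times Y)$ modulo $\ker p_L$ by expressing both sides as pullbacks of appropriate $G$-sets, and then apply $F$ (or equivalently the pullback property of $\Pi$ together with the Mackey axioms for $M$) to conclude that $\rho(z)_M$ commutes with $M_*(f)$ and $M^*(f)$.

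The third step is formal: any morphism $\phi : M \to N$ in $Comack_R(G)$ commutes with all $M_*(b)$ and $M^*(a)$, whence $\phi_X \circ \rho(z)_M(X) = \rho(z)_N(X) \circ \phi_X$, so $\rho(z)$ is a natural endomorphism of the identity functor. Finally, I would compare with Proposition \ref{form_center} by evaluating at $X = \Omega_G$: the explicit formula there matches the definition of $\rho$ on the generating $G$-sets $Z_{H,H,x}$ appearing in the proof of Proposition \ref{form_center}, which shows that $\rho$ recovers, under the identification $Z(Comack_R(G)) \cong Z(co\mu_R(G))$, the ring isomorphism $\iota$ already constructed.
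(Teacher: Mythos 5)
Your plan is correct and follows essentially the same route as the paper's (very terse) proof: well-definedness comes from the cohomological property of $M$ via the computation already carried out in the proofs of Theorems \ref{yoshida_alg} and \ref{yoshida_functor}, and the remaining verifications reduce to the pullback (Mackey) axiom for $M$ together with centrality of $z$. The one point the paper records that your plan omits is that $\rho$ is a \emph{ring} homomorphism, i.e.\ $\rho(zz')=\rho(z)\rho(z')$; this follows from the same pullback argument you already invoke for the commutation with $M_{*}(f)$ and $M^{*}(f)$, so it is a minor omission rather than a gap.
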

\begin{proof}
Since $M$ is a cohomological Mackey functor this map does not depend on the choice of $Z_{U,a,b}$. Now, using the pullback property of $M$, it is easy to check that $\rho$ is a ring homomorphism. 
\end{proof}
\begin{re}\label{redr}
Since the categories $Comack_{R}(G)$ and $co\mu_{R}(G)$-$Mod$ are equivalent, their center are isomorphic. The reader familiar with the equivalence of categories between Dress' definition and Green's definition (or Th\'evenaz-Webb's definition) can see that the morphism $\rho$ is just the morphism induced by $\iota$ and the equivalence of categories between $Comack_{R}(G)$ and $co\mu_{R}(G)$-$Mod$.\end{re}
\begin{lemma}
Let $G$ be a finite group. Let $\eta : Z(RG) \to Z(Fun_{R}^{+}(G))$ be the map defined as follows. Let $z\in Z(RG)$ and let $F\in Fun_{R}^{+}(G)$. Let $V$ be a permutation projective module. Let us denote by $m_{z}$ the endomorphism of the identity functor of $RG$-$Mod$ corresponding to $z$. Then $\eta_z$ is the endomorphism of the identity functor of $Fun_{R}^{+}(G)$ defined by:
\begin{equation*}
\eta_z(F)_V = F(m_{z}(V)) : F(V)\to F(V).
\end{equation*}
The map $\eta$ is an ring homomorphism.
\end{lemma}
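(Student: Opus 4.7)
The plan is to show that $\eta$ is well defined (i.e.\ that $\eta_z$ really lands in the center) and then that it preserves sums, products, and the unit. The whole proof will rely on two simple observations: first, because $z$ lies in the center of $RG$, the collection of maps $m_z(V):V\to V$ is itself a natural endomorphism of the identity functor of $RG$-$\Mo$, so for every morphism $f:V\to W$ of $RG$-modules the square with horizontal arrows $m_z(V),\,m_z(W)$ and vertical arrows $f$ commutes; second, for $z,z'\in Z(RG)$ one has $m_{zz'}(V)=m_{z'}(V)\circ m_z(V)$.

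First I would verify that $\eta_z(F)$ is a morphism in $Fun_{R}^{+}(G)$, i.e.\ a natural transformation $F\to F$. Given $f:V\to W$ in $perm^{+}_{R}(G)$, applying the contravariant functor $F$ to the commutative square $m_z(W)\circ f = f\circ m_z(V)$ yields exactly the naturality square $F(f)\circ F(m_z(W)) = F(m_z(V))\circ F(f)$. Next I would check that $\eta_z$ is natural in $F$: for a morphism $\alpha:F\to F'$ in $Fun_R^{+}(G)$ (a natural transformation) and any permutation projective $V$, the identity $\alpha_V\circ F(m_z(V)) = F'(m_z(V))\circ\alpha_V$ is simply the naturality of $\alpha$ applied to the morphism $m_z(V):V\to V$. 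This shows $\eta_z\in Z(Fun_R^{+}(G))$.

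It then remains to verify the ring axioms on $\eta$. Additivity $\eta_{z+z'}=\eta_z+\eta_{z'}$ is immediate from $m_{z+z'}=m_z+m_{z'}$ and the $R$-linearity of $F$, and $\eta_1=\mathrm{Id}$ is immediate from $m_1(V)=\mathrm{Id}_V$ and $F(\mathrm{Id}_V)=\mathrm{Id}_{F(V)}$. For multiplicativity, since $z,z'$ commute we have $m_{zz'}(V)=m_{z'}(V)\circ m_z(V)$, and then
\begin{equation*}
\eta_{zz'}(F)_V = F\bigl(m_{z'}(V)\circ m_z(V)\bigr) = F(m_z(V))\circ F(m_{z'}(V)) = \eta_z(F)_V\circ\eta_{z'}(F)_V,
\end{equation*}
the middle equality using contravariance of $F$; the right-hand side is exactly the product $\eta_z\cdot\eta_{z'}$ in $Z(Fun_R^{+}(G))$, where multiplication is given by vertical composition of natural endotransformations of the identity.

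There is essentially no obstacle here beyond bookkeeping: the only subtle point to remember is the reversal of order under the contravariant $F$, which is reconciled by the fact that $Z(RG)$ is commutative so that $m_z$ and $m_{z'}$ commute on every module. For the same reason, one can check afterwards that $\eta$ is compatible with the isomorphism $\iota$ of Proposition \ref{form_center} via Remark \ref{redr} and the Yoshida equivalence of Theorem \ref{yoshida_functor}, but this is not required for the statement of the lemma.
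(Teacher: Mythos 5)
Your verification is correct, and it is exactly the routine check the paper dismisses with ``This is straightforward'': naturality of $\eta_z(F)$ follows from centrality of $z$ plus contravariance of $F$, naturality in $F$ is the naturality of $\alpha$ at $m_z(V)$, and the ring axioms reduce to $m_{z+z'}=m_z+m_{z'}$, $m_{zz'}=m_{z'}\circ m_z$ and $R$-linearity of the functors. You also correctly flag the one genuine subtlety, namely that the order reversal caused by contravariance is harmless because $Z(RG)$ is commutative.
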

\begin{proof}
This is straightforward. 
\end{proof}
The Yoshida equivalence is compatible with the action of central idempotents: 
\begin{theo}[Yoshida Equivalence, block version]\label{yoshida}
There is a commutative diagram: 
\begin{equation}
\xymatrix{
& & & Z(Fun^{+}_{R}(G))\ar[dd]^{\cong}_{\gamma} \\
Z(co\mu_{R}(G))\ar[r]^{\iota} & Z(RG)\ar[urr]_{\eta}^{\cong}\ar[drr]_{\rho}^{\cong} & & \\
& & & Z(Comack_{R}(G))\ar@/^2pc/[lllu]^{\star}.
}
\end{equation}
Here, the map $\gamma$ is the ring homomorphism induced by the functor $\Gamma$ as in Lemma \ref{fun_center}. 
The arrow $\star$ is the map induced by the equivalence $Comack_{R}(G)$ and $co\mu_{R}(G)$-$Mod$ (see Remark \ref{redr}).
\newline Let $1=e+f\in Z(RG)$ be a decomposition of $1$ as a sum of two orthogonal idempotents. Then 
\begin{equation*}
Comack_{R}(G)\cong \rho(e)\big(Comack_{R}(G)\big)\oplus \rho(f)\big(Comack_{R}(G)\big).
\end{equation*}
and 
\begin{equation*}
Fun_{R}^{+}(G)=\eta(e)\big(Fun_{R}^{+}(G)\big)\oplus \eta(f)\big(Fun_{R}^{+}(G)\big).
\end{equation*}
If $b=e$ or $f$, then $\rho(b)\big(Comack_{R}(G)\big)\cong \eta(b)\big(Fun_{R}^{+}(G)\big)$.
\end{theo}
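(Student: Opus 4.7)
The plan is to establish the commutativity of the diagram first. Since $\iota$ is an isomorphism by Proposition \ref{form_center} and $\gamma$ is an isomorphism by Lemma \ref{fun_center}, commutativity will force $\eta$, $\rho$ and $\star$ to be isomorphisms as well, and the category decompositions and block equivalence will then follow formally from the functoriality of the action of central idempotents.

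First I would verify the identity $\gamma \circ \eta = \rho$ by a direct computation. Given $z \in Z(RG)$, $F \in Fun_R^+(G)$ and a finite $G$-set $X$, I write $m_z(RX) = p_L(X \overset{b}{\leftarrow} U \overset{a}{\rightarrow} X) = a_* \circ b^*$. Since $\Gamma(F) = F \circ \Pi$ with $F$ contravariant and $\Pi = (\Pi^*, \Pi_*)$ bivariant, unpacking the composition shows that the structural maps of the cohomological Mackey functor $\Gamma(F)$ are $\Gamma(F)_*(b) = F(b^*)$ and $\Gamma(F)^*(a) = F(a_*)$. Therefore
\begin{equation*}
\rho(z)_{\Gamma(F)}(X) = \Gamma(F)_*(b) \circ \Gamma(F)^*(a) = F(b^*) \circ F(a_*) = F(a_* \circ b^*) = F(m_z(RX)) = \Gamma(\eta(z)_F)_X,
\end{equation*}
and since $\gamma$ is the ring isomorphism obtained by transporting $Z(Fun_R^+(G))$ along the equivalence $\Gamma$ via Lemma \ref{fun_center}, this yields $\gamma(\eta(z)) = \rho(z)$. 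The other required commutativity, $\iota \circ \star \circ \rho = \mathrm{id}_{Z(RG)}$, is essentially the content of Remark \ref{redr}: the formula for $\iota$ in Proposition \ref{form_center} was extracted precisely from the multiplication-by-$z$ endomorphism of $R\Omega_G$, and $\star \circ \rho$ sends $z$ to exactly this endomorphism under the isomorphism $co\mu_R(G) \cong End_{RG}(R\Omega_G)$ of Theorem \ref{yoshida_alg}.

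Once the diagram commutes, applying the ring homomorphism $\rho$ to $1 = e + f$ yields orthogonal idempotents $\rho(e), \rho(f) \in Z(Comack_R(G))$ summing to the identity endomorphism of $Id_{Comack_R(G)}$; for every cohomological Mackey functor $M$, the functorial splitting $M \cong \rho(e)_M(M) \oplus \rho(f)_M(M)$ yields the desired decomposition of $Comack_R(G)$, and the same argument with $\eta$ produces the decomposition of $Fun_R^+(G)$. Finally, the identity $\gamma \circ \eta = \rho$ translates into $\rho(b)_{\Gamma(F)} = \Gamma(\eta(b)_F)$ for every $F \in Fun_R^+(G)$, so the equivalence $\Gamma$ restricts to an equivalence between $\eta(b)(Fun_R^+(G))$ and $\rho(b)(Comack_R(G))$. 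The main obstacle I anticipate is simply keeping the variance conventions straight when unpacking $\Gamma(F) = F \circ \Pi$; once the formulas $\Gamma(F)_*(b) = F(b^*)$ and $\Gamma(F)^*(a) = F(a_*)$ are verified, everything else reduces to bookkeeping.
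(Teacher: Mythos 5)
Your proposal is correct and follows essentially the same route as the paper: the heart of both arguments is the computation that for $m_z(RX)=a_*\circ b^*$ one has $\rho(z)_{\Gamma(F)}(X)=F(b^*)\circ F(a_*)=F(m_z(RX))=\Gamma(\eta(z)_F)_X$, which gives $\gamma\circ\eta=\rho$, with the $\iota$/$\star$ part of the diagram deferred to Remark \ref{redr} exactly as the paper does. The only cosmetic difference is that the paper exhibits an explicit inverse of $\eta$ (via $\sigma\mapsto\bigl(\sigma_{Y_{RG}}(RG)(Id_{RG})\bigr)(1)$) while you deduce that $\eta$ and $\rho$ are isomorphisms from the commutativity of the diagram together with the known isomorphisms $\iota$, $\gamma$ and $\star$; both are valid.
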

\begin{proof}[Sketch of proof]
Let $\sigma$ be a natural transformation of the identity functor of $Fun_{R}^{+}(G)$, let $Y_{RG}$ be the Yoneda functor $Hom_{RG}(-,RG)$, then $z_{\sigma}:=\bigg(\sigma_{Y_{RG}}(RG)(Id_{RG})\bigg)(1)$ is an element of $Z(RG)$. One can check that the map which sent $\sigma$ to $z_{\sigma}$ is the inverse isomorphism of $\eta$.
\newline Let $M$ be a cohomological Mackey functor in the sense of Dress. Let $z\in Z(RG)$, we denote by $m_{z}$ the corresponding natural transformation in $Z(RG$-$Mod)$.\newline If $f\in Z(Fun_{R}^{+}(G))$, then with the notations of Theorem \ref{yoshida}, we have:
\begin{equation*}
\gamma(f)_{M}= \epsilon_{M}\circ \Gamma(f_{Y(M)}) \circ \delta'_{M} : M\to \Gamma(Y(M)) \to \Gamma(Y(M)) \to M.
\end{equation*}
So, if $X$ is a finite $G$-set, and if $m\in M(X)$, we have:
\begin{align*}
\gamma_{M}(\eta_{z})(m) &= \delta'_{M}(X)\circ (m_{z}(X))\circ Id_{RX}\\
&=\delta'_{M}(X)(m_{z}(X))\\
\end{align*}
Now, if $m_{z}(X)= p_{L}(Z_{U,a,b})$, we have:
\begin{align*}
\gamma_{M}(\eta_{z})(m) = M_{*}(b)M^{*}(a)(m).
\end{align*}
This is equal to $\rho(z)_{M}(X)$.  
\end{proof}
Let $R$ be $\mathcal{O}$ or $k$, where $\mathcal{O}$ is a complete discrete valuation ring and $k$ is the residue field. Let $1=b_{1}+b_{2}+\cdots + b_{s}$ be a decomposition of $1$ in orthogonal sum of central primitive idempotent of $RG$. This decomposition induces a decomposition of $Comack_{R}(G)=\bigoplus_{i=1}^{s}\rho(b_i)Comack_{R}(G)$ and $Fun_{R}^{+}(G)=\bigoplus_{i=1}^{s}\eta(b_i)Fun_{R}^{+}(G)$. We have the following straightforward lemma:
\begin{lemma} Let $b$ be a block idempotent of $RG$. The category $\eta(b)(Fun_{R}^{+}(b))$ is equivalent to the category denoted by $Fun_{R}^{+}(b)$, consisting of contravariant functors from $perm^{+}_{R}(b)$ to $R$-$Mod$, where $perm^{+}_{R}(b)$ is the category consisting of the finitely generated {$p$-permutation} $RG$-modules which are in the block $RGb$.  
\end{lemma}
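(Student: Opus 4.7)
My plan is to construct an explicit pair of mutually inverse equivalences between $\eta(b)(Fun_{R}^{+}(G))$ and $Fun_{R}^{+}(b)$. The key ingredient is that the central idempotent $b \in Z(RG)$ induces a canonical $RG$-module decomposition $V = bV \oplus (1-b)V$ for every $V \in perm_{R}^{+}(G)$. Since a direct summand of a $p$-permutation module is itself $p$-permutation, and since $(1-b)$ acts as zero on $bV$, we obtain $bV \in perm_{R}^{+}(b)$. Moreover, because $b$ is central, every $RG$-linear map $f : V \to W$ commutes with multiplication by $b$ and hence restricts to an $RG$-linear map $bf : bV \to bW$, so $V \mapsto bV$ defines an additive functor $perm_{R}^{+}(G) \to perm_{R}^{+}(b)$.

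I would then define the restriction functor $R : \eta(b)(Fun_{R}^{+}(G)) \to Fun_{R}^{+}(b)$ by $R(F) := F|_{perm_{R}^{+}(b)}$, and the extension functor $E : Fun_{R}^{+}(b) \to \eta(b)(Fun_{R}^{+}(G))$ by $E(F)(V) := F(bV)$ and $E(F)(f) := F(bf)$. To check that $E$ really lands in $\eta(b)(Fun_{R}^{+}(G))$, I observe that $m_{b}(V)$ restricts to $\mathrm{id}_{bV}$, whence $E(F)(m_{b}(V)) = F(\mathrm{id}_{bV}) = \mathrm{id}_{E(F)(V)}$, so $\eta(b)$ acts as the identity on $E(F)$.

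Finally, I would verify that $R$ and $E$ are inverse. The composition $R \circ E$ is the identity on $Fun_{R}^{+}(b)$, since $bV = V$ whenever $V \in perm_{R}^{+}(b)$. For the other direction, consider for $V \in perm_{R}^{+}(G)$ the canonical projection $p : V \twoheadrightarrow bV$ and inclusion $i : bV \hookrightarrow V$, which satisfy $p \circ i = \mathrm{id}_{bV}$ and $i \circ p = m_{b}(V)$. Applying a functor $F \in \eta(b)(Fun_{R}^{+}(G))$ yields $F(i) \circ F(p) = \mathrm{id}_{F(bV)}$ and $F(p) \circ F(i) = F(m_{b}(V)) = \mathrm{id}_{F(V)}$, so $F(p) : F(bV) \to F(V)$ is a natural isomorphism exhibiting $E \circ R(F) \cong F$. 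The only delicate bookkeeping is the reversal of arrows caused by the contravariance of $F$; beyond that, no real obstacle arises, which is precisely why the lemma is advertised as straightforward.
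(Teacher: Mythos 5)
Your proof is correct; the paper itself offers no argument (it labels the lemma ``straightforward''), and your explicit construction --- cutting each $V$ down to $bV$ via the central idempotent, restricting and extending functors accordingly, and using $F(m_{b}(V))=\mathrm{id}$ to show the unit $F(p):F(bV)\to F(V)$ is a natural isomorphism --- is exactly the intended verification. You also correctly read the (typographical) occurrence of $\eta(b)(Fun_{R}^{+}(b))$ in the statement as $\eta(b)(Fun_{R}^{+}(G))$, and your handling of the contravariance is accurate.
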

For a block $b$ of $RG$, we denote by $Comack_{R}(b)$ the category $\rho(b)Comack_{R}(G)$. 
\begin{coro}\label{yoshida_co}
Let $b$ be a block of $RG$. The we have:
\begin{equation*}
co\mu_{R}(G)\iota(b)\hbox{-}Mod \cong Comack_{R}(b) \cong Fun_{R}^{+}(b). 
\end{equation*}
\end{coro}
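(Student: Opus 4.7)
The proof is essentially a bookkeeping exercise: all three categories in the statement decompose along the block idempotents of $RG$, and one must check that the three decompositions match up under the equivalences already established.

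The plan is to trace the block decomposition $1 = b_1 + \cdots + b_s$ of $Z(RG)$ through each of the three categories. On the cohomological Mackey algebra side, Proposition \ref{form_center} gives the corresponding decomposition $1 = \iota(b_1) + \cdots + \iota(b_s)$ in $Z(co\mu_R(G))$, which yields the usual block decomposition of $co\mu_R(G)\text{-}Mod$. On the functor side, the discussion just before the corollary gives $Fun_R^+(G) = \bigoplus_i \eta(b_i)Fun_R^+(G)$, and the preceding lemma identifies the summand $\eta(b)Fun_R^+(G)$ with the category $Fun_R^+(b)$ of contravariant functors from $perm_R^+(b)$ to $R\text{-}Mod$. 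On the Dress side, we have $Comack_R(G) = \bigoplus_i \rho(b_i)Comack_R(G)$ and by definition $Comack_R(b) = \rho(b)Comack_R(G)$.

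The second equivalence $Comack_R(b) \cong Fun_R^+(b)$ is then immediate: Theorem \ref{yoshida} asserts that $\rho(b)Comack_R(G) \cong \eta(b)Fun_R^+(G)$, and composing with the lemma identifying the right-hand side with $Fun_R^+(b)$ finishes this half. For the first equivalence $co\mu_R(G)\iota(b)\text{-}Mod \cong Comack_R(b)$, I would invoke the equivalence of categories $co\mu_R(G)\text{-}Mod \cong Comack_R(G)$ and use Lemma \ref{fun_center} to observe that it induces the ring isomorphism $\star : Z(Comack_R(G)) \to Z(co\mu_R(G))$ appearing in Theorem \ref{yoshida}. The commutativity of the diagram in Theorem \ref{yoshida} shows that $\star$ sends $\rho(b)$ to $\iota(b)$. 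Consequently, the full subcategory of $co\mu_R(G)\text{-}Mod$ on which $\iota(b)$ acts as the identity corresponds, under the equivalence, to the full subcategory of $Comack_R(G)$ on which $\rho(b)$ acts as the identity, namely $Comack_R(b)$.

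The main obstacle, if any, is the verification that the isomorphism $\star$ really does transport $\rho(b)$ to $\iota(b)$; this is precisely the content of Remark \ref{redr} together with the commutativity of the triangle in Theorem \ref{yoshida}. Once that correspondence is in hand, the corollary reduces to concatenating the two equivalences and the identification of $\eta(b)Fun_R^+(G)$ with $Fun_R^+(b)$, with no further computation required.
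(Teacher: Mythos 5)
Your argument is correct and is exactly the route the paper intends: the corollary is stated without proof precisely because it follows by concatenating Theorem \ref{yoshida} (which gives $\rho(b)Comack_{R}(G)\cong\eta(b)Fun_{R}^{+}(G)$ and the compatibility of $\iota$ with $\rho$ via the map $\star$ and Remark \ref{redr}) with the preceding lemma identifying $\eta(b)Fun_{R}^{+}(G)$ with $Fun_{R}^{+}(b)$ and the definition $Comack_{R}(b)=\rho(b)Comack_{R}(G)$. No gaps.
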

\begin{coro}\label{coro_block}
Let $P$ be a projective indecomposable cohomological Mackey functor. Then $P$ belongs to the block $Comack_{R}(b)$ if and only if $P(G/1)$ is an indecomposable $p$-permutation module in the block $RGb$.
\end{coro}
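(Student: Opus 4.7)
The plan is to transport the question across the Yoshida equivalence of Corollary \ref{yoshida_co} and then read off the block component on the side of the functor category $Fun_R^+(G)$, where the projective indecomposables are simply the representables.

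First, I would recall that under the equivalence $\Gamma\dashv Y$ between $Fun_R^+(G)$ and $Comack_R(G)$, the functor $\Gamma$ sends a representable functor $\mathrm{Hom}_{RG}(-,V)\in Fun_R^+(G)$ to the fixed-point functor $FP_V$, since by definition $\Gamma(F)=F\circ\Pi$ and $\Pi(X)=RX$, which gives $\Gamma(\mathrm{Hom}_{RG}(-,V))(X)=\mathrm{Hom}_{RG}(RX,V)=FP_V(X)$. Because $perm_R^+(G)$ is idempotent-complete, the Yoneda lemma guarantees that the indecomposable projective objects of $Fun_R^+(G)$ are exactly the representables $\mathrm{Hom}_{RG}(-,V)$ with $V$ running through (representatives of isomorphism classes of) indecomposable objects of $perm_R^+(G)$, i.e.\ indecomposable $p$-permutation $RG$-modules. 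Transporting along the equivalence, the indecomposable projectives of $Comack_R(G)$ are, up to isomorphism, exactly the functors $FP_V$ for $V$ indecomposable $p$-permutation.

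Next, evaluation at $G/1$ on the cohomological Mackey side corresponds to evaluation at $R(G/1)=RG$ on the functor side, so $FP_V(G/1)=\mathrm{Hom}_{RG}(RG,V)\cong V$. In particular, if $P$ is projective indecomposable and $P\cong FP_V$ with $V$ indecomposable in $perm_R^+(G)$, then $P(G/1)\cong V$, and conversely every indecomposable $p$-permutation module arises in this way. This identifies the assignment $P\mapsto P(G/1)$ with the bijection between (isomorphism classes of) projective indecomposable cohomological Mackey functors and indecomposable $p$-permutation $RG$-modules.

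Finally, I would conclude using the block-version of Yoshida's equivalence (Theorem \ref{yoshida} and Corollary \ref{yoshida_co}). The block decomposition of $Fun_R^+(G)$ along the central primitive idempotents $b$ of $RG$ is precisely $Fun_R^+(G)=\bigoplus_b Fun_R^+(b)$, where $Fun_R^+(b)$ consists of contravariant functors on the full subcategory $perm_R^+(b)$ of $p$-permutation modules lying in $RGb$. Hence a representable $\mathrm{Hom}_{RG}(-,V)$ belongs to $Fun_R^+(b)$ if and only if $V\in perm_R^+(b)$, i.e.\ $V$ lies in the block $RGb$. Transporting across the equivalence $Comack_R(b)\cong Fun_R^+(b)$ gives the desired equivalence: $P\in Comack_R(b)$ iff $P\cong FP_V$ with $V\in perm_R^+(b)$, iff $P(G/1)\cong V$ is an indecomposable $p$-permutation module in the block $RGb$. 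The only mildly delicate point is the compatibility between the block decomposition of $Comack_R(G)$ via $\rho$ and that of $Fun_R^+(G)$ via $\eta$, but this is exactly the content of Theorem \ref{yoshida}, so no extra work is needed.
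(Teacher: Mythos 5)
Your argument is correct, and it reaches the same conclusion as the paper by a closely parallel but not identical route. The paper stays entirely on the Mackey-functor side: it recalls the $RG$-module structure on $P(G/1)$ given by the conjugation maps $\gamma_{1,x}$, observes that $(\rho(b)\cdot P)(G/1)=b\cdot P(G/1)$ (so that membership of $P$ in $\rho(b)Comack_{R}(G)$ is detected by whether $b$ acts as the identity on $P(G/1)$), quotes Th\'evenaz--Webb (Theorem 16.5 of \cite{tw}) for the fact that $P(G/1)$ is a $p$-permutation module, and settles the converse by noting that $FP_{V}$ is projective and lies in $Comack_{R}(b)$ whenever $V$ is a $p$-permutation $RGb$-module. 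You instead transport the whole question across the Yoshida equivalence, classify the projective indecomposables of $Fun_{R}^{+}(G)$ as the representables $Hom_{RG}(-,V)$ via Yoneda, evaluate at $RG$, and invoke the block compatibility of Theorem \ref{yoshida}. The content is essentially the same: your Yoneda classification plays the role of the paper's citation of \cite{tw} (Theorem 16.5 and Proposition 16.10), and your observation that $\eta(b)$ sends $Hom_{RG}(-,V)$ to $Hom_{RG}(-,bV)$ plays the role of the identity $(\rho(b)\cdot P)(G/1)=b\cdot P(G/1)$. Your version has the merit of being self-contained within the machinery the paper has already built. One step you pass over quickly: to know that every projective indecomposable of $Fun_{R}^{+}(G)$ is a summand of a representable, one should identify the regular module of $co\mu_{R}(G)\cong End_{RG}(R\Omega_{G})$ with $\bigoplus_{H\leqslant G}Hom_{RG}(-,R[G/H])$ (or cite the standard projectivization result for functor categories on idempotent-complete additive categories); this is routine and at the same level of detail as the paper's own citation, so it is not a genuine gap.
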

\begin{proof}
Let $P$ be a cohomological Mackey functor. Let us recall that, with Dress' notation $P(G/1)$ is an $RG$-module for the following action. Let $m\in P(G/1)$ and $x\in G$. Then $x.m = M^{*}(\gamma_{1,x})(m)$. The result follows from the fact that $(\rho(b)\cdot P)(G/1) = b\cdot P(G/1)$ and from Theorem $16.5$ \cite{tw} which says that $P(G/1)$ is a $p$-permutation module. In the other way, if $V$ is a $p$-permutation $RGb$-module, then $FP_{V}$ is a projective cohomological Mackey functor in $Comack_{R}(b)$.
\end{proof}
In the proof of Theorem $17.1$ of \cite{tw}, Th\'evenaz and Webb proved that the block of the category of the cohomological Mackey functors are in bijection with the block of $RG$. They defined the blocks of the category $Comack_{R}(G)$ using non-split short exact sequences between simple cohomological Mackey functors. Thanks to Corollary \ref{coro_block} and Proposition $16.10$ of \cite{tw} \big(in order to understand the projective cover of the simple cohomological Mackey functors\big), their block decomposition coincide with ours. 
\section{Permeable Morita equivalences.}
Let $R=\mathcal{O}$ or $k$ as above. With the version of Yoshida's equivalence of Corollary \ref{yoshida_co} it is not difficult to lift an equivalence between blocks of group algebras to an equivalence of the corresponding blocks of the cohomological Mackey algebras.
\begin{de}
Let $G$ and $H$ be two finite groups, let $b$ be a block of $RG$, let $c$ be a block of $RH$. A \emph{permable} $RHc$-$RGb$-bimodule is a bimodule $X$ such that: 
\begin{itemize}
\item[$\mathcal{P}:$]$X\otimes_{RGb}-$ is a functor from $perm^{+}_{R}(b)$ to $perm_{R}^{+}(c)$. 
\end{itemize}
\end{de}
\begin{lemma}\label{prop_fun}
Let $G$ and $H$ be two finite groups, let $b$ be a block of $RG$, let $c$ be a block of $RH$. Let $X$ be a \emph{permeable} $RHc$-$RGb$-bimodule.  
Then $X$ induces a functor, denoted by $\Phi_{X}: Comack_{R}(c)\to Comack_{R}(b)$ and defined in the proof. Moreover this functor sends an arbitrary fixed point functor to a fixed point functor.
\end{lemma}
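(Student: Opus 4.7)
The plan is to construct $\Phi_X$ by transporting the precomposition functor along $X \otimes_{RGb} -$ through the Yoshida equivalence of Corollary \ref{yoshida_co}, and then to identify the image of a fixed point functor by the standard bimodule hom-tensor adjunction.

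By Corollary \ref{yoshida_co}, let $Y : Comack_R(c) \to Fun^{+}_R(c)$ and $Y : Comack_R(b) \to Fun^{+}_R(b)$ denote the Yoshida equivalences. The permeability condition $\mathcal{P}$ says precisely that $X \otimes_{RGb} -$ restricts to a covariant $R$-linear functor $T_X : perm^{+}_R(b) \to perm^{+}_R(c)$. Precomposition with $T_X$ yields a pullback functor
\begin{equation*}
T_X^{*} : Fun^{+}_R(c) \to Fun^{+}_R(b), \qquad F \mapsto F \circ T_X.
\end{equation*}
I will then define $\Phi_X := Y^{-1} \circ T_X^{*} \circ Y$, which is $R$-linear and functorial in $X$ by construction.

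For the second assertion, recall from the proof of Theorem \ref{yoshida_functor} that $Y(M)(W) = Hom_{Comack_R(c)}(FP_W, M)$ for $M \in Comack_R(c)$ and $W \in perm^{+}_R(c)$. Taking $M = FP_V$ for a permutation $RHc$-module $V$, a Yoneda-type identification (applied to the representable Mackey functor $FP_W$) gives $Hom_{Comack_R(c)}(FP_W, FP_V) \cong Hom_{RHc}(W, V)$, so that $Y(FP_V)$ is naturally isomorphic to the representable functor $Hom_{RHc}(-, V)$ on $perm^{+}_R(c)$. Applying $T_X^{*}$ and the bimodule hom-tensor adjunction gives, for each $W \in perm^{+}_R(b)$,
\begin{equation*}
\bigl(T_X^{*}\, Y(FP_V)\bigr)(W) \;=\; Hom_{RHc}(X \otimes_{RGb} W,\, V) \;\cong\; Hom_{RGb}\bigl(W,\, Hom_{RHc}(X, V)\bigr),
\end{equation*}
naturally in $W$. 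Setting $U := Hom_{RHc}(X, V)$, viewed as a left $RGb$-module through the right action of $RGb$ on $X$, the right-hand side is precisely $Y(FP_U)$ by the same identification applied in the other block. Transporting back through $Y^{-1}$ yields $\Phi_X(FP_V) \cong FP_U$, so a fixed point functor is sent to a fixed point functor.

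The main technical points are the naturality of the Yoneda identification $Hom_{Comack}(FP_W, FP_V) \cong Hom_{RG}(W, V)$ in $W$, which is implicit in the construction of the Yoshida equivalence recalled in Theorem \ref{yoshida_functor}, and the naturality of the hom-tensor adjunction in $W \in perm^{+}_R(b)$, which is standard. No further hypothesis on $X$ is needed because $FP_U$ is a cohomological Mackey functor for \emph{any} $RG$-module $U$, and here $U = Hom_{RHc}(X, V)$ genuinely lies in $RGb\text{-Mod}$ through the right $RGb$-action on $X$, hence $FP_U$ lies in $Comack_R(b)$.
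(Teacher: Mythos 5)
Your construction is exactly the paper's: the precomposition functor $T_X^{*}$ is the paper's $L_X$, and conjugating by the Yoshida equivalence (the paper uses the explicit quasi-inverse $\Gamma$ in place of your $Y^{-1}$) gives the same $\Phi_X$, after which the identification $\Phi_X(FP_V)\cong FP_{Hom_{RHc}(X,V)}$ is obtained by the same chain of full-faithfulness of $FP$ on permutation modules plus hom-tensor adjunction. The proof is correct and matches the paper's argument.
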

\begin{proof}
We use the equivalence $Comack_{R}(b)\cong Fun^{+}_{R}(b)$ of Corollary \ref{yoshida_co}. One can define a functor $L_{X}$ from  $Fun^{+}_{R}(c)$ to $Fun^{+}_{R}(b)$ by $L_{X}(F)(V):=F(X\otimes_{RGb}V)$, for $F\in Fun_{R}^{+}(c)$ and $V\in perm_{R}^{+}(b)$. We denote by $\Phi_{X}$ the composite functor: 
\begin{equation*}
\xymatrix{
Fun_{R}^{+}(c)\ar[r]^{L_{X}} & Fun_{R}^{+}(b)\ar[d]^{\Gamma}\\
Comack_{R}(c)\ar[u]^{Y}\ar@{..>}[r]^{\Phi_{X}}&Comack_{R}(b)
}
\end{equation*}
so if $V$ is a $RHc$-module, and $Z$ is a finite $G$-set, then
\begin{align*}
\Phi_{X}(FP_{V})(Z)&=\Gamma(L_{X}(Y(FP_{V})))(Z)\\
&=Y(FP_{V})(X\otimes_{RGb}RZ)\\
&\cong Hom_{Comack_{R}(H)}(FP_{X\otimes_{RGb}RZ},FP_{V})\\
&\cong Hom_{RHc}(X\otimes_{RGb}RZ,V)\\
&\cong Hom_{RGb}(RZ,Hom_{RHc}(X,V))\\
&\cong FP_{Hom_{RHc}(X,V)}(Z). 
\end{align*}
This isomorphism is functorial in $Z$, so $\Phi_{X}(FP_{V})=FP_{Hom_{RHc}(X,V)}$.
\end{proof}
\begin{re}
This Lemma generalizes the construction defined by Bouc for permutation bimodules (see Section $3.12$ \cite{bouc_complex}).
\end{re}
\begin{de}
Let $G$ and $H$ be two finite groups, let $b$ be a block of $RG$ and $c$ be a block of $RH$. A \emph{permeable} (Morita) equivalence is an $RHc$-$RGb$-bimodule $X$ such that:
\begin{enumerate}
\item $X\otimes_{RGb}- : RGb$-$Mod\to$ $RHc$-$Mod$ is an equivalence of categories.
\item $X$ and $X^{*}:=Hom_{R}(X,R)$ are two permeable bimodules. 
\end{enumerate}
\end{de}
\begin{prop}\label{prop_morita}
Let $G$ and $H$ be two finite groups, let $b$ be a block of $RG$ and $c$ be a block of $RH$. Let $X$ be a permeable equivalence between $RGb$ and $RHc$. 
Then $Comack_{R}(b)\cong Comack_{R}(c)$. 
\end{prop}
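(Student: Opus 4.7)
The plan is to use Lemma \ref{prop_fun} on both sides. Since $X$ is permeable, it yields a functor $\Phi_{X}\colon Comack_{R}(c)\to Comack_{R}(b)$. Dually, $X^{*}$ is an $RGb$-$RHc$-bimodule (the $R$-dual swaps the two actions) which is permeable by hypothesis, so the same lemma produces $\Phi_{X^{*}}\colon Comack_{R}(b)\to Comack_{R}(c)$. The task is then to verify that these two functors are mutually quasi-inverse equivalences.

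To avoid working directly with cohomological Mackey functors, I would translate the problem across the Yoshida equivalence of Corollary \ref{yoshida_co}. There, $\Phi_{X}$ corresponds to the pull-back functor $L_{X}\colon Fun_{R}^{+}(c)\to Fun_{R}^{+}(b)$ defined by $L_{X}(F)(V)=F(X\otimes_{RGb}V)$, and similarly $L_{X^{*}}(F')(W)=F'(X^{*}\otimes_{RHc}W)$. A direct computation gives
\begin{equation*}
(L_{X^{*}}\circ L_{X})(F)(W)\cong F(X\otimes_{RGb}X^{*}\otimes_{RHc}W),
\end{equation*}
with a symmetric formula for the opposite composition. The whole statement therefore reduces to producing natural bimodule isomorphisms $X\otimes_{RGb}X^{*}\cong RHc$ and $X^{*}\otimes_{RHc}X\cong RGb$.

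For these, I would exploit the fact that $RGb$ and $RHc$ are symmetric $R$-algebras, being blocks of group algebras. Since $X\otimes_{RGb}-$ is a Morita equivalence, its quasi-inverse is $\mathrm{Hom}_{RHc}(X,RHc)\otimes_{RHc}-$, and a symmetrizing form on $RHc$ induces a canonical isomorphism $X^{*}\cong \mathrm{Hom}_{RHc}(X,RHc)$ of $RGb$-$RHc$-bimodules. This identifies $X^{*}$ with the Morita-inverse bimodule of $X$, from which the two displayed bimodule isomorphisms follow. I expect the only real obstacle to be precisely this identification of the $R$-dual with the Morita-inverse bimodule via the symmetric structure of block algebras; once it is in hand, the rest of the argument is a formal assembly through Lemma \ref{prop_fun} and Corollary \ref{yoshida_co}, and an analogous argument would be expected to extend to the derived-equivalence setting needed for Theorem \ref{thea}.
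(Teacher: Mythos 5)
Your proposal is correct and follows the same route as the paper: the paper's proof simply invokes Lemma \ref{prop_fun} to obtain $L_{X}$ and $L_{X^{*}}$ and declares it ``clear'' that they are quasi-inverse. You have merely supplied the standard justification for that last step, namely the identification $X^{*}\cong \mathrm{Hom}_{RHc}(X,RHc)$ of the $R$-dual with the Morita-inverse bimodule via the symmetric-algebra structure of block algebras, which is exactly the right way to fill in the omitted detail.
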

\begin{proof}
By Lemma \ref{prop_fun}, we have a functor $L_{X}: Fun^{+}_{R}(c)\to Fun^{+}_{R}(b)$, and a functor \\ $L_{X^{*}}: Fun^{+}_{R}(b)\to Fun^{+}_{R}(c)$. It is clear that these two functors are two quasi-inverse equivalences between $Fun_{R}^{+}(c)$ and $Fun_{R}^{+}(b)$. 
\end{proof}
\begin{re}
One may ask if there exist \emph{permeable} Morita equivalences. Let $G$ be a finite group, and $P$ be a Sylow $p$-subgroup of $G$ and $H$ be its normalizer. Let $b$ be a block of $kG$ and with defect group $P$ and let $c$ be the Brauer correspondent of this block in $N_{G}(P)$. If $kGb$-$Mod$ is Morita equivalent to $kN_{G}(P)c$-$Mod$ by a $p$-permutation bimodule (that is a `splendid' Morita equivalence) then the two conditions are satisfied. 
\end{re}
\begin{re}
There exist $RG$-$RH$-bimodules which are \emph{not} $p$-permutation bimodules but which are \emph{permeable}. The most radical example is for $G=H=C_{2}$ and $R=\overline{\mathbb{F}_{2}}$. Then, all the $RG$-modules are permutation modules. So every $R[C_{2}\times C_{2}]$-module induces a functor between $perm_{R}(H)$ and $perm_{R}(G)$, and there are infinitely many isomorphism classes of $R[C_{2}\times C_{2}]$-modules and only $5$ isomorphism classes of permutation $R[C_{2}\times C_{2}]$-modules. Moreover, there are examples of Morita equivalences between blocks of group algebras which are \emph{not} `splendid' but which are permeable. The easiest example is probably for the self equivalences of $kC_{3}$ when $k=\mathbb{F}_{3}$. Indeed there are two permutations bimodules inducing a self-Morita equivalence of $kC_{3}$ and $6$ isomorphism classes of self-Morita equivalence of $kC_{3}$. This follows from elementary results on the Picard group of a basic $k$-algebra and easy computations. Now all of these $6$ equivalences are permeable. 
\end{re}
\section{Derived equivalences between blocks of cohomological Mackey algebras.}
Let $G$ and $H$ be two finite groups. Let $R=\mathcal{O}$ or $k$. Let $b$ be a block of $RG$ and $c$ be a block of $RH$. In this section, we prove that one can lift a derived equivalence between blocks of group algebras into a derived equivalence between the corresponding blocks of cohomological algebras as soon as this derived equivalence respects $p$-permutation modules. Since this part is rather technical, we fix the notations. 
\begin{notations}
\begin{itemize}
\item Let $X$ be an $RHc$-$RGb$-bimodule, then we denote by $t_{X}$ the functor from $RGb$-$Mod$ to $RHc$-$Mod$ induced by the tensor product with $X$.\\ If $f : X\to Y$ is a morphism of $RGb$-$RHc$-bimodules, we denote by $\hat{f}$ the natural transformation between the functors $t_{X}$ and $t_{Y}$.
\item Let $X$ be an $RHc$-$RGb$-bimodule such that $t_{X}$ induces a functor from $perm_{R}^{+}(b)$ to $perm_{R}^{+}(c)$. Let $F$ be a functor of $Fun_{R}^{+}(c)$, we can precompose the functor $F$ by the functor $t_{X}$, this gives a functor $F\circ t_{X}$ of the category $Fun_{R}^{+}(b)$. We will denote by $F\widetilde{X}$ this functor. 
\item Let $(F_{\bullet}, \eta_{\bullet})$ be a complex of functors of $Fun_{R}^{+}(c)$. We choose to label the complex by decreasing order, that is $\eta_{i}$ is a natural transformation from the functor $F_{i}$ to the functor $F_{i-1}$. 
\item If $(X_{\bullet},d_{\bullet})$ is a complex (written in decreasing order) of \emph{permeable} $RHc$-$RGb$-bimodules, then $((t_{X})_{\bullet}, \hat{d}_{\bullet})$ is a complex of functors from $RGb$-$Mod$ to $RHc$-$Mod$. 
\end{itemize}
\end{notations}
Let $(F_{\bullet}, \eta_{\bullet})$ be such a complex of functors and let $(X_{\bullet},d_{\bullet})$ be a complex of permeable $RHc$-$RGb$-bimodules. Then we can precompose the complex $F_{\bullet}$ by the complex of functors $(t_{X})_{\bullet}$. This gives a double complex: 
\begin{equation}\label{dc}
\xymatrix{
& \vdots& &\vdots&\\
\cdots\ar[r]& F_{i}\widetilde{X_{j}}\ar[rr]^{\eta_{i}\widetilde{X_{j}}}\ar[u] && F_{i-1}\tilde{X_{j}}\ar[r]\ar[u] &\cdots\\
\cdots\ar[r]& F_{i} \widetilde{X_{j-1}}\ar[rr]^{\eta_{i}\widetilde{X_{j-1}}}\ar[u]^{(-1)^{i}F_{i}\tilde{d_{j}}} && F_{i-1}\widetilde{X_{j-1}}\ar[u]_{(-1)^{i-1}F_{i-1}\widetilde{d_{j}}}\ar[r]&\cdots\\
& \vdots\ar[u] && \vdots\ar[u] &
}
\end{equation}
Here, we use the following notations:
\begin{enumerate}
\item Let $d : X\to Y$ be a map between two $RHc$-$RGb$-bimodules and let $F$ be a functor of the category $Fun_{R}^{+}(c)$.Then $F\tilde{d}$ is the natural transformation from $F\tilde{Y}$ to $F\tilde{X}$ defined by: if $M$ is a $p$-permutation $RGb$-module, then
\begin{equation*}
F\tilde{d}(M)=F(d\otimes Id_{M}) : F(Y\otimes_{RGb}M)\to F(X\otimes_{RGb}M). 
\end{equation*}
\item Let $\eta$ be a natural transformation from $F$ to $C$, where $F$ and $C$ belong to $Fun_{R}^{+}(c)$. Let $X$ be a permeable $RGb$-$RHc$-bimodule. Then $\eta\tilde{X}$ is the natural transformation from $F\tilde{X}$ to $C\tilde{X}$ defined by: let $M$ be a $p$-permutation $RGb$-module. Then
\begin{equation*}
\eta\tilde{X}(M)=\eta(X\otimes_{RGb}M) : F(X\otimes_{RGb}M)\to C(X\otimes_{RGb}M).
\end{equation*}
\end{enumerate}
Let $(F_{\bullet}, \eta_{\bullet})$ be a complex of functors which belong to $Fun_{R}^{+}(c)$. Let $(X_{\bullet},d_{\bullet})$ be a complex of \emph{permeable} $RHc$-$RGb$-bimodules. Then we denote by $(L_{X_{\bullet}}(F_{\bullet}),\delta_{\bullet})$ the total complex of the double complex (\ref{dc}), that is:
\begin{equation*}
\big(L_{X_{\bullet}}(F_{\bullet})\big)_{k} = \bigoplus_{i-j=k} F_{i}\widetilde{X_{j}},
\end{equation*} 
and the differential is given by the family of natural transformations $\delta_{k}$ defined by $$\delta_{k}=\bigoplus_{i-j=k} (-1)^{i}F_{i}\widetilde{d_{j+1}}+\eta_{i}\widetilde{X_{j}}.$$  More explicitly, let $M$ be a $p$-permutation $RGb$-module.\\ Let $w=(w_{i,j})_{i-j=k} \in \bigoplus_{i-j=k} F_{i}(X_{j}\otimes_{RGb}M)$. Then $\delta_{k}(M)=\bigoplus_{i-j=k}\delta_{i,j}(M)$, where:
\begin{equation*}
\delta_{i,j}(M)(w_{i,j})=(-1)^{i}F_{i}(d_{j+1}\otimes Id_{M})(w_{i,j}) + \eta_{i}(X_{j}\otimes_{RGb} M)(w_{i,j}). 
\end{equation*}
Here, we use the notation $w_{i,j}$ which is the projection of $w$ on the composant $F_{i}(X_{j}\otimes_{RGb} M)$. 
\begin{lemma}
With the previous notations,
\begin{enumerate}
\item $(L_{X_{\bullet}}(F_{\bullet}),\delta_{\bullet})$ is a complex.
\item $F_{\bullet} \mapsto L_{X\bullet}(F_{\bullet})$ is an additive functor from the category $Ch^{-}(Fun_{R}^{+}(c))$ to the category $Ch^{-}(Fun_{R}^{+}(b))$. 
\item The functor $F_{\bullet} \mapsto L_{X\bullet}(F_{\bullet})$  induces a triangulated functor between the corresponding homotopy categories. 
\end{enumerate}
\end{lemma}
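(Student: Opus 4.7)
The plan is to treat $L_{X_{\bullet}}(F_{\bullet})$ as the standard totalization of the bicomplex (\ref{dc}), whose horizontal differentials come from $\eta_{\bullet}$ (applied to a fixed $\widetilde{X_{j}}$) and whose vertical differentials come from $d_{\bullet}$ (to which $F_{i}$ is applied) with the customary sign $(-1)^{i}$. With this picture in place the three statements reduce to routine bicomplex bookkeeping. Before anything, I would verify that each summand $F_{i}\widetilde{X_{j}}$ really lies in $Fun_{R}^{+}(b)$: permeability of $X_{j}$ means that $t_{X_{j}}$ restricts to a functor from $perm_{R}^{+}(b)$ to $perm_{R}^{+}(c)$, so composing with $F_{i}\in Fun_{R}^{+}(c)$ yields a contravariant $R$-linear functor on $perm_{R}^{+}(b)$; right-boundedness of $F_{\bullet}$ and $X_{\bullet}$ makes each direct sum $\bigoplus_{i-j=k}F_{i}\widetilde{X_{j}}$ finite and the total complex right-bounded.

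For (1), I would compute $\delta_{k-1}\circ \delta_{k}$ evaluated at a $p$-permutation $RGb$-module $M$ and expand on a pure summand $F_{i}\widetilde{X_{j}}$. Four contributions appear. The purely horizontal term $\eta_{i-1}\widetilde{X_{j}}\circ \eta_{i}\widetilde{X_{j}}$ vanishes because $\eta_{\bullet}$ is a differential; the purely vertical term vanishes because $d_{j+1}d_{j+2}=0$ and $F_{i}$ is additive. The two mixed contributions $(-1)^{i}\eta_{i}\widetilde{X_{j-1}}\circ F_{i}\widetilde{d_{j+1}}$ and $(-1)^{i-1}F_{i-1}\widetilde{d_{j+1}}\circ \eta_{i}\widetilde{X_{j}}$ are equal up to sign by naturality of $\eta_{i}:F_{i}\to F_{i-1}$ applied to the morphism $d_{j+1}\otimes Id_{M}:X_{j+1}\otimes_{RGb}M\to X_{j}\otimes_{RGb}M$, and the opposing signs $(-1)^{i}$ and $(-1)^{i-1}$ make them cancel.

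For (2), a chain map $\phi_{\bullet}:F_{\bullet}\to G_{\bullet}$ yields natural transformations $\phi_{i}\widetilde{X_{j}}$ in each bidegree, and I take $L_{X_{\bullet}}(\phi_{\bullet})$ to be their direct sum. Commutation with $\delta_{\bullet}$ reduces, on each bidegree, to naturality of $\phi_{i}$ applied to $d_{j+1}\otimes Id_{M}$ on one hand and to the identity $\phi_{i-1}\eta_{i}=\eta_{i}\phi_{i}$ on the other, while additivity is clear because every operation involved is additive. For (3), a homotopy $h_{\bullet}$ between chain maps $\phi_{\bullet}$ and $\psi_{\bullet}$ yields via the $h_{i}\widetilde{X_{j}}$ a homotopy between $L_{X_{\bullet}}(\phi_{\bullet})$ and $L_{X_{\bullet}}(\psi_{\bullet})$, verified bidegree-by-bidegree from the defining homotopy identity together with naturality. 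Finally $L_{X_{\bullet}}(-)$ commutes with shifts and with mapping cones (the total of the shifted or cone bicomplex is the shift or cone of the total, since the construction is additive and preserves finite direct sums), so the induced functor between homotopy categories is triangulated.

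The main obstacle is really just the sign bookkeeping in (1): the cancellation of the two mixed terms depends on a sign convention that has to be chosen in advance, so I would want to carefully set up the bicomplex (\ref{dc}) with exactly the signs $(-1)^{i}F_{i}\widetilde{d_{j+1}}$ on the vertical arrows before verifying that the outer square anticommutes, which is what produces the cancellation.
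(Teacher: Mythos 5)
Your proposal is correct and follows essentially the same route as the paper: the squared differential is expanded into four terms, the pure terms vanish because $\eta_{\bullet}$ and $d_{\bullet}$ are differentials, and the mixed terms cancel by naturality of $\eta_{i}$ together with the opposing signs $(-1)^{i}$ and $(-1)^{i-1}$; functoriality and additivity are checked bidegree-by-bidegree exactly as in the paper. The only difference is one of detail in part (3): where you assert that totalization commutes with shifts and mapping cones, the paper writes out the differential of $L_{X_{\bullet}}(cone(f))$ and matches it term by term (with signs) against that of $cone(L_{X_{\bullet}}(f))$, a verification worth doing explicitly since the sign conventions are exactly where such an argument could silently fail.
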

\begin{proof}
\begin{enumerate}
\item Let $k$ be an integer. We have to check that $\delta_{k-1}\circ \delta_{k}=0$. Let $M$ be a $p$-permutation $RGb$-module and let $w=(w_{i,j})_{i-j=k} \in \big(L_{X_{\bullet}}(F_{\bullet})\big)_{k}$. It is enough to see that the $\big(\delta_{k-1}(M)\circ \delta_{k}(M)\big)_{s,t}=0$, where this is the projection of $\delta_{k-1}(M)\circ\delta_{k}(M)$ on the composant $F_{s}(X_{t}\otimes M)$ for $s-t=k-2$. \\
Then, we have for $s-t=k-2$:
{\small\begin{align*}
(\delta_{k-1}(M)\circ \delta_{k}(M)(w))_{s,t}& = \eta_{s+1}(X_{t}\otimes_{RGb}M)\big((\delta_{k}(w))_{s+1,t}\big) \\&+ (-1)^s F_{s}(d_{t}\otimes Id_{M})\big((\delta_{k}(w))_{s,t-1}\big) \\
&=\eta_{s+1}(X_{t}\otimes_{RGb}M)\big(\eta_{s+2}(X_{t}\otimes_{RGb}M)(w_{s+2,t})\big) \\
&+(-1)^{s+1}\eta_{s+1}(X_{t}\otimes_{RGb}M)\big( F_{s+1}(d_{t}\otimes Id_{M})(w_{s+1,t-1})\big)\\
&+(-1)^{s}F_{s}(d_{t}\otimes Id_{M})\big(\eta_{s+1}(X_{t-1}\otimes_{RGb}M)(w_{s+1,t-1})\big)\\
&+F_{s}(d_{t}\otimes Id_{M})\big( F_{s}(d_{t-1}\otimes Id_{M} )(w_{s,t-2})\big),
\end{align*}}
but $\eta$ is a differential for the complex $F_{\bullet}$ and $d_{\bullet}$ is a differential for the complex $X_{\bullet}$. So, we have:
\begin{align*}
(\delta_{k-1}(M)\circ \delta_{k}(M)(w))_{s,t}&=(-1)^{s}F_{s}(d_{t}\otimes Id_{M})\big(\eta_{s+1}(X_{t-1}\otimes_{RGb}M)(w_{s+1,t-1})\big) \\
& + (-1)^{s+1}\eta_{s+1}(X_{t}\otimes_{RGb}M)\big( F_{s+1}(d_{t}\otimes Id_{M})(w_{s+1,t-1})\big).
\end{align*}
Since $\eta_{s+1}$ is a natural transformation from $F_{s+1}$ to $F_{s}$, the following diagram is commutative:
\begin{equation*}
\xymatrix{
F_{s+1}(X_{t}\otimes_{RGb} M)\ar[rrr]^{\eta_{s+1}(X_{t}\otimes_{RGb}M)} &&& F_{s}(X_{t}\otimes_{RGb}M) \\
F_{s+1}(X_{t-1}\otimes_{RGb}M)\ar[u]^{F_{s+1}(d_{t}\otimes Id_{M})}\ar[rrr]^{\eta_{s+1}(X_{t-1}\otimes_{RGb}M)} &&& F_{s}(X_{t-1}\otimes_{RGb}M)\ar[u]_{F_{s}(d_{t}\otimes Id_{M})} 
}
\end{equation*}
This proves that $\delta_{\bullet}$ is actually a differential. 
\item Let $(F_{\bullet},\eta_{\bullet})$ and $(C_{\bullet},\gamma_{\bullet})$ be two complexes of functors which belong to $Fun_{R}^{+}(c)$. Let $\phi=(\phi_{\bullet})$ be a morphism from $(F_{\bullet},\eta_{\bullet})$ to $(C_{\bullet},\gamma_{\bullet})$. One may define a natural transformation $\Phi_{k}$ from $\big(L_{X_{\bullet}}(F_\bullet)\big)_{k}$ to $\big(L_{X_{\bullet}}(C_{\bullet})\big)$ by: $\Phi_{k}:=\bigoplus_{i-j} \phi_{i}\widetilde{X_{j}}$, where $\phi_{i}\widetilde{X_{j}}$ is the natural transformation from $F_{i}\widetilde{X_{j}}$ to $C_{i}\widetilde{X_{j}}$ defined as follows: if $M$ is a $p$-permutation $RGb$-module, then
\begin{equation*}
\phi_{i}\widetilde{X_{j}}(M)=\phi(X_{j}\otimes_{RGb}M) : F_{i}(X_{j}\otimes_{RGb}M)\to C_{i}(X_{j}\otimes_{RGb}M). 
\end{equation*}
We have to check that $(\Phi_{k})_{k\in\mathbb{Z}}$ is a morphism of complexes, i-e, we have to check that $\Phi$ commutes with the differentials.\newline We denote, here, by $\delta_{\bullet}$ the differential of $L_{X_{\bullet}}(F_{\bullet})$ and $\Delta_{\bullet}$ the differential  of $L_{X_{\bullet}}(C_{\bullet})$. Let $w\in \bigoplus_{i-j=k} F_{i}(X_{j}\otimes_{RGb} M)$. Then for $s-t=k-1$, we have:
\begin{align*}
\big((\Phi_{k-1}(M)\circ \delta_{k}(M))(w)\big)_{s,t}&=\phi_{s}(X_{t}\otimes_{RGb}M)\big(\delta_{k}(w)_{s,t}\big)\\
&=\phi_{s}(X_{t}\otimes_{RGb}M)\big(\eta_{s+1}(X_{t}\otimes M)(w_{s+1,t})\big)\\
&+ (-1)^{s} \phi_{s}(X_{t}\otimes_{RGb}M)\big(F_{s}(d_{t}\otimes Id_{M})(w_{s,t-1})\big). 
\end{align*}
On the other hand, we have:
\begin{align*}
\big(\Delta_{k}(M)\circ \Phi_{k}(M)(w)\big)_{s,t} &= \gamma_{s+1}(X_{t}\otimes M)\big(\phi_{s+1}(X_{t}\otimes_{RGb}M)(w_{s+1,t})\big) \\
&+ (-1)^{s} C_{s}(d_{s}\otimes Id_{M})\big(\phi_{s}(X_{t-1}\otimes_{RGb}M)(w_{s,t-1})\big).
\end{align*}
So, the fact that $\Phi_{\bullet}$ is a morphism of complexes follows from the commutativity of these two diagrams:
\begin{equation*}
\xymatrix{
F_{s+1}(X_{t}\otimes_{RGb}M) \ar[rrr]^{\eta_{s+1}(X_{t}\otimes_{RGb}M)} \ar[d]^{\phi_{s+1}(X_{t}\otimes_{RGb}M)} &&& F_{s}(X_{t}\otimes_{RGb} M)\ar[d]^{\phi_{s}(X_{t}\otimes_{RGb}M)}\\
C_{s+1}(X_{t}\otimes_{RGb}M) \ar[rrr]^{\gamma_{s+1}(X_{t}\otimes_{RGb}M)} &&& C_{s}(X_{t}\otimes_{RGb} M)
}
\end{equation*}
Here, the commutativity follows from the fact that $\phi_{\bullet}$ is a morphism of complexes. 
\begin{equation*}
\xymatrix{
F_{s}(X_{t-1}\otimes_{RGb}M)\ar[rrr]^{F_{s}(d_{t}\otimes Id_{M})} \ar[d]^{\phi_{s}(X_{t-1}\otimes_{RGb}M)} & & & F_{s}(X_{t}\otimes_{RGb}M)\ar[d]^{\phi_{s}(X_{t}\otimes_{RGb}M)}  \\
C_{s}(X_{t-1}\otimes_{RGb}M)\ar[rrr]^{C_{s}(d_{t}\otimes Id_{M})} & & & C_{s}(X_{t}\otimes_{RGb}M) 
}
\end{equation*}
Here, the commutativity comes from the fact that $\phi_{s}$ is a natural transformation from $F_{s}$ to $C_{s}$. It is now clear that $L_{X_{\bullet}}$ is an additive functor, and we denote by $L_{X_{\bullet}}(\phi)$ the family of natural transformations $\Phi_{\bullet}$. 
\item Since the functor $L_{X}$ is additive, it induces a functor between the corresponding homotopy categories. 
It remains to see that the functor $L_{X}$ is triangulated. Let $(F_{\bullet},\eta_{\bullet})$ and $(C_{\bullet},\gamma_{\bullet})$ be two complexes of functors which belong to $Fun_{R}^{+}(c)$. Let $f$ be a morphism between these two complexes. We need to check that $L_{X}(cone(f))\cong cone(L_{X}(f))$. We use the following notations:
\begin{itemize}
\item The differential  of $cone(f)$ is denoted by $\beta$.
\item The differential of $L_{X_{\bullet}}(F_{\bullet})$ is denoted by $\delta$.
\item The differential of $L_{X_{\bullet}}(C_{\bullet})$ is denoted by $\Delta$.
\item The differential of $L_{X_{\bullet}}(cone(f))$ is denoted by $\partial$.
\item The differential of $cone(L_{X}(f))$ is denoted by $D$. 
\end{itemize}
Recall that (\cite{weibel} Section 1.5) the mapping cone of $f$ is defined as follow:
\begin{equation*}
cone(f)_{k} = F_{k-1}\oplus C_{k},
\end{equation*}
and the differential is the natural transformation from $cone(f)_{k}$ to $cone(f)_{k-1}$ defined by the following diagram: 
\begin{equation*}
\xymatrix{
F_{k-1}\ar[rr]^{-\eta_{k-1}}\ar@{}[d]^{\oplus}\ar[rrd]^{-f_{k-1}} && F_{k-2}\ar@{}^{\oplus}[d]\\
C_{k}\ar[rr]^{\gamma_{k}} && C_{k-1}
}
\end{equation*}
So, the differential $\partial_{k}$ from $cone(L_{X_{\bullet}}(f))_{k}$ to $cone(L_{X_{\bullet}}(f))_{k-1}$ is the natural transformation defined by $-\delta_{k-1} - L_{X_{\bullet}}(f)_{k-1} + \Delta_{k}$. On the other hand, \begin{align*}L_{X_{\bullet}}(cone(f))_{k} &= \bigoplus_{i-j = k} cone(f)_{i}\tilde{X_{j}} \\
&=\bigoplus_{i-j=k}F_{i-1}\tilde{X_{j}} \bigoplus_{i-j=k} C_{i}\tilde{X_{j}} \\
&= L_{X_{\bullet}}(F)_{k-1} \oplus L_{X_{\bullet}}(C)_{k}. 
\end{align*}
Let $M$ be a $p$-permutation $RGb$-module. Let $w\in cone\Big(L_{X_{{\bullet}}}(f)(M)\Big)_{k}$, in order to compute the differential of this complex, we denote by $w^{F}$ the projection of the element $w$ on $\bigoplus_{i-j=k-1}F_{i}(X_{j}\otimes_{RGb}M)$, and $w^{C}$ the projection on $\bigoplus_{i-j=k}C_{i}(X_{j}\otimes_{RGb}M)$. Let $s$ and $t$ be integers such that $s-t=k-1$. Then the projection of $D_{k}(w)$ on $cone(f)_{s,t}$ is:
\begin{align*}
\big(D_{k}(w)\big)_{s,t} &= \beta_{s+1}\widetilde{X_{t}}(w_{s+1,t}) + (-1)^{s}cone(f)_{s}\widetilde{d_{t}}(w_{s,t-1})\\
&=(-1)^{s}F_{s-1}\widetilde{d_{t}}((w^{F})_{s-1,t-1}) - \eta_{s}\widetilde{X_{t}}((w^{F})_{s,t})\\
&+ \gamma_{s+1}\widetilde{X_{t}}((w^{C})_{s+1,t}) + (-1)^{s} C_{s}(w^{C}_{s,t-1}) - f_{s}\widetilde{X_{t}}((w^{F})_{s,t})\\
&= \big(-\delta_{k-1}(w^{F})\big)_{s-1,t} + \big(\Delta_{k}(w^{C})\big)_{s,t} - \big(L_{X_{\bullet}}(f)(w^{F})\big)_{s,t}\\
&= \partial_k(w)_{s,t}. 
\end{align*}
Recall, that the exact triangles in the homotopy category are given by the triangles which are isomorphic to:
\begin{equation*}
F\overset{f}{\rightarrow} C \to cone(f) \to F[1].
\end{equation*}
Here the map from $C$ to $cone(f)$ (denoted by $i$) is the injection of $C$ in $cone(f)$ and the map (denoted by $p$) from $cone(f)$ to $F[1]$ is given by the projection of $F_{i-1} \subset cone(f)_{i}$ on $F[1]_{i}$ (see \cite{weibel} 1.52). 
\newline It is clear that $L_{X_{\bullet}}(F[1])=(L_{X_{\bullet}}(F))[1]$, moreover it is clear that $L_{X}(i)$ is the injection of $L_{X_{\bullet}}(C)$ in $cone(L_{X_{\bullet}}(f))=L_{X_{\bullet}}(cone(f))$ and $L_{X_{\bullet}}(p)$ is the projection of $cone(L_{X_{\bullet}}(f))=L_{X_{\bullet}}(cone(f))$ on $L_{X_{\bullet}}(F)[1]$. So
\begin{equation*}
L_{X_{\bullet}}(F) \overset{L_{X_{\bullet}}(f)}{\rightarrow} L_{X_{\bullet}}(C) \to L_{X_{\bullet}}(cone(f)) \to (L_{X_{\bullet}}(F))[1],
\end{equation*}
is an exact triangle. 
\end{enumerate}
\end{proof}
\begin{lemma}
Let $X_{\bullet}$ and $Y_{\bullet}$ be two bounded complexes of permeable $RHc$-$RGb$-bimodules. Then:
\begin{enumerate}
\item The two functors $L_{X_{\bullet}\oplus Y_{\bullet}}$ and $L_{X_{\bullet}}\oplus L_{Y_{\bullet}}$ are isomorphic as functors from $K^{-}(Fun_{R}^+(c))$ to $K^{-}(Fun_{R}^{+}(b))$.
\item If the complex $X_{\bullet}$ is contractible, then the functor $L_{X_{\bullet}}$ is contractible in the following sense: the complex $L_{X_{\bullet}}(F_{\bullet})$ is (naturally in $F$) contractible for every complex $F_{\bullet}$ of functors which belong to $Fun_{R}^{+}(c)$.
\end{enumerate}
\begin{proof}
\begin{enumerate}
\item Let $(F_{\bullet},\eta_{\bullet})$ and $(C_{\bullet},\gamma_{\bullet})$ be two complexes of functors which belong to $Fun_{R}^{+}(c)$. Let $f : F_{\bullet} \to C_{\bullet}$ be a morphism between theses two complexes. It is clear that $L_{X_{\bullet}\oplus Y_{\bullet}}(F_{\bullet})\cong L_{X_{\bullet}}(F_{\bullet})\oplus L_{Y_{\bullet}}(F_{\bullet})$. Let $M$ be a $p$-permutation $RGb$-module, let $j$ be an integer. We denote by $\zeta_{M,j}$ the composite: 
$$X_{j} \otimes_{RGb} M \to X_{j}\otimes_{RGb} M \oplus Y_{j}\otimes_{RGb} M \cong (X_{j}\oplus Y_{j})\otimes_{RGb}M.$$
The functoriality of the isomorphism follows from the fact that, for $i,j \in \mathbb{Z}$, the following diagrams (and the corresponding diagrams for the terms of $Y_{\bullet}$) are commutative:
\begin{equation*}
\xymatrix{
F_{i}((X_{j}\oplus Y_{j})\otimes_{RGb}M)\ar[rrr]^{F_{i}(\zeta_{M,j})}\ar[d]_{f_{i}((X_{j}\oplus Y_{j})\otimes_{RGb}M)} & &&F_{i}(X_{j}\otimes M)\ar[d]^{f_{i}(X_{j}\otimes M)} \\
C_{i}((X_{j}\oplus Y_{j})\otimes_{RGb}M)\ar[rrr]^{C_{i}(\zeta_{M,j})} &&& C_{i}(X_{j}\otimes M)\\
}
\end{equation*}
\item Let $X_{\bullet}$ be a contractible two-sided bounded complex. That is, there is a family of maps $s=(s_{j})_{j\in \mathbb{Z}}$, where $s_{j}$ is a map from  $X_{j}$ to  $X_{j+1}$, such that we have for $j\in\mathbb{Z}$:
\begin{equation*}
Id_{X_{j}} = s_{j-1}d_{j} + d_{j+1}s_{j}. 
\end{equation*}
Let $(F_{\bullet}, \eta_{\bullet})$ be a complex of functors which belong to $Fun_{R}^{+}(c)$. Then one can defined a family $(F\widetilde{s})$ of natural transformations $(F\widetilde{s})_{k}$ from $L_{X_\bullet}(F_\bullet)_{k}$ to $L_{X_\bullet}(F_\bullet)_{k+1}$ as: $$(F\widetilde{s})_{k}= \bigoplus_{i-j=k} (-1)^{i}F_{i}\widetilde{s_{j-1}},$$ where $F_{i}\widetilde{s_{j-1}}$ is the natural transformation defined as: let $M$ be a $p$-permutation $RGb$-module. Then
\begin{align*}
F_{i}\widetilde{s_{j-1}}(M)=F_{i}(s_{j-1}\otimes_{RGb} Id_{M}) : F_{i}(X_{j}\otimes_{RGb}M) \to F_{i}(X_{j-1}\otimes_{RGb}M). 
\end{align*}
Now, we have to check that $Id_{L_{X_\bullet}(F_\bullet)} = \delta_{k+1} F\widetilde{s}_{k} + F\widetilde{s}_{k-1} \delta_{k}$. Let $i$ and $j$ be two integers such that $i-j=k$. If $w\in(L_{X_\bullet}(F_\bullet)(M))_{k}$, then we have:
\begin{align*}
\big(\delta_{k+1}F\widetilde{s}_{k}(w)\big)_{i,j} &= \eta_{i+1}\widetilde{X_{j}}\big((F\widetilde{s}_{k}(w))_{i+1,j}\big) + (-1)^{i} F_{i}\widetilde{dj}\big((F\widetilde{s}_{k}(w))_{i,j-1}\big)\\
&=(-1)^{i+1}\eta_{i+1}\widetilde{X_{j}} F_{i+1}\widetilde{s_{j}}(w_{i+1,j+1})\\
&+F_{i}(s_{j-1}d_{j}\otimes_{RGb}Id_{M})(w_{i,j}). 
\end{align*}
On the other hand, we have:
\begin{align*}
\big(F\widetilde{s}_{k-1}(\delta_{k}(w))\big)_{i,j} &= (-1)^{i}F_{i}(s_{j}\otimes_{RGb}Id_{M})\big((\delta_{k}(w))_{i,j+1}\big)\\
&= (-1)^{i}F_{i}(s_{j}\otimes_{RGb}Id_{M})\eta_{i+1}\widetilde{X_{j+1}}(w_{i+1,j+1}) \\
&+ F_{i}(d_{j+1}s_{j}\otimes_{RGb}Id_{M})(w_{i,j}).  
\end{align*}
The result follows from the commutativity of the next diagram:
\begin{equation*}
\xymatrix{
F_{i+1}(X_{j+1}\otimes_{RGb}M) \ar[rrr]^{F_{i+1}(s_{j}\otimes_{RGb}Id_{M})}\ar[d]_{\eta_{i+1}(X_{j+1}\otimes_{RGb}M)} &&& F_{i+1}(X_{j}\otimes_{RGb}M)\ar[d]_{\eta_{i+1}(X_{j}\otimes_{RGb}M)}\\
F_{i}(X_{j+1}\otimes_{RGb}M) \ar[rrr]^{F_{i}(s_{j}\otimes_{RGb}Id_{M})} &&& F_{i}(X_{j}\otimes_{RGb}M)
}
\end{equation*}
Moreover, this construction is functorial in $F$, so the functor $L_{X_{\bullet}}$ is isomorphic to the the zero functor from $K^{-}(Fun_{R}^{+}(c))$ to $K^{-}(Fun_{R}^{+}(b))$ when $X_{\bullet}$ is contractible. 
\end{enumerate}
\end{proof}
\end{lemma}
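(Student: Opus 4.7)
The plan is to prove both claims by constructing the relevant isomorphism and contracting homotopy explicitly at the chain level, so that the conclusions in the homotopy category follow automatically. Both statements reduce to additivity of each component functor $F_i$ and to naturality of the structural transformations; no deep fact about derived categories is required.

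For (1), the starting point is that $- \otimes_{RGb} M$ and each $F_i$ are additive. Hence for any $p$-permutation $RGb$-module $M$ and any indices $i,j$ there is a canonical splitting
\begin{equation*}
F_i\bigl((X_j\oplus Y_j)\otimes_{RGb}M\bigr) \cong F_i(X_j\otimes_{RGb}M)\oplus F_i(Y_j\otimes_{RGb}M).
\end{equation*}
Summing over $i-j=k$ produces isomorphisms $L_{X_\bullet\oplus Y_\bullet}(F_\bullet)_k \cong L_{X_\bullet}(F_\bullet)_k\oplus L_{Y_\bullet}(F_\bullet)_k$. I would then check that both differentials respect the splitting: the vertical part $F_i \widetilde{d^{X\oplus Y}_j}$ decomposes since $d^{X\oplus Y}_j = d^X_j\oplus d^Y_j$ and $F_i$ is additive, and the horizontal part $\eta_i \widetilde{X_j\oplus Y_j}$ decomposes by additivity of $\eta_i$. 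Naturality in a morphism $\phi : F_\bullet \to C_\bullet$ is automatic because each $\phi_i$ is additive. The resulting isomorphisms are strict, not merely homotopic, so they descend trivially to $K^-$.

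For (2), let $s = (s_j)$ with $s_j : X_j\to X_{j+1}$ be a contracting homotopy of $X_\bullet$, so $s_{j-1}d_j + d_{j+1}s_j = Id_{X_j}$. I would define a candidate homotopy on $L_{X_\bullet}(F_\bullet)$ by
\begin{equation*}
h_k = \bigoplus_{i-j=k} (-1)^{i+1}\, F_i\widetilde{s_j} : L_{X_\bullet}(F_\bullet)_k \to L_{X_\bullet}(F_\bullet)_{k+1},
\end{equation*}
where $F_i\widetilde{s_j}$ is the natural transformation with value $F_i(s_j\otimes Id_M)$ at $M$. Computing $\delta_{k+1}h_k + h_{k-1}\delta_k$ on the $(i,j)$-component produces four terms: two diagonal contributions $F_i(s_{j-1}d_j\otimes Id_M)$ and $F_i(d_{j+1}s_j\otimes Id_M)$ whose sum equals $F_i(Id) = Id$ by the contracting homotopy identity, and two cross terms involving $\eta_{i+1}\widetilde{X_{j+1}}\circ F_{i+1}\widetilde{s_j}$ and $F_i\widetilde{s_j}\circ \eta_{i+1}\widetilde{X_j}$, which cancel by naturality of $\eta_{i+1}:F_{i+1}\Rightarrow F_i$ applied to the morphism $s_j\otimes Id_M$.

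The main obstacle is sign bookkeeping: the signs in $h_k$ must be arranged so that the cross terms appear with opposite signs and cancel rather than reinforce, and the precise sign depends on the convention already fixed for $\delta_k$. Once that is pinned down the verification is purely formal. Moreover, because $h$ is built entirely from $s$ and from the functors $F_i$ themselves, any morphism $\phi : F_\bullet \to C_\bullet$ intertwines the corresponding homotopies by the same additivity/naturality argument used in (1). This exhibits a natural isomorphism from $L_{X_\bullet}$ to the zero functor on $K^{-}(Fun_R^+(c))$ whenever $X_\bullet$ is contractible, as required.
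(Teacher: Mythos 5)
Your proposal is correct and follows essentially the same route as the paper: part (1) is the strict chain-level splitting coming from additivity of the $F_i$ together with naturality in $F_{\bullet}$, and part (2) is the explicit homotopy $\bigoplus_{i-j=k}(\pm 1)F_i\widetilde{s}$, with the diagonal terms giving $F_i(Id)$ via $s_{j-1}d_j+d_{j+1}s_j=Id$ and the cross terms cancelling by naturality of $\eta_{i+1}$. The only discrepancy is the bookkeeping you already flag: with the paper's convention $\delta_k=\bigoplus(-1)^iF_i\widetilde{d_{j+1}}+\eta_i\widetilde{X_j}$ the correct sign on the homotopy is $(-1)^i$ (and the component on $F_i(X_j\otimes M)$ uses $s_{j-1}$, not $s_j$), rather than your $(-1)^{i+1}$.
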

\begin{lemma}
Let $G$, $H$ and $K$ be finite groups. Let $b$ be a block of $RG$, let $c$ be a block of $RH$ and let $d$ be a block of $RK$. Let $(X_{\bullet},d^{X}_{\bullet})$ be a bounded complex of permeable $RHb$-$RGc$-bimodules. Let $(Y_\bullet,d^{Y}_{\bullet})$ be a bounded complex of permeable $RKd$-$RHc$-bimodules. Then, we have an isomorphism of functors:
\begin{equation*}
L_{X}\circ L_{Y} \cong L_{Y\otimes_{RHc} X} 
\end{equation*}
\end{lemma}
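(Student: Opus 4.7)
The plan is to exhibit a canonical chain isomorphism $L_{X_\bullet}(L_{Y_\bullet}(F_\bullet)) \cong L_{(Y\otimes X)_\bullet}(F_\bullet)$, natural in $F_\bullet$, given termwise by the identity on the underlying triple complex. Because each functor $F_i$ in $Fun_R^+(d)$ is additive, and because $\widetilde{Y_t}\,\widetilde{X_j}=\widetilde{Y_t\otimes_{RHc} X_j}$ by associativity of the tensor product, a direct computation gives
\[
L_{X_\bullet}(L_{Y_\bullet}(F_\bullet))_k(M)=\bigoplus_{s-t-j=k} F_s(Y_t\otimes_{RHc} X_j\otimes_{RGb}M)=L_{(Y\otimes X)_\bullet}(F_\bullet)_k(M),
\]
and this equality defines a candidate isomorphism $\alpha$ which is the identity on each summand $F_s(Y_t\otimes X_j\otimes M)$.

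Next I would verify that $\alpha$ commutes with the differentials by listing the three types of arrows out of the $(s,t,j)$-summand. On the left hand side, substituting the differential of $L_{Y_\bullet}(F_\bullet)$ (whose degree-$i$ component is the sum over $s-t=i$ of $(-1)^{s}F_s\widetilde{d^Y_{t+1}}+\eta_s\widetilde{Y_t}$) into the formula for the differential of $L_{X_\bullet}$ yields: to $(s,t,j+1)$ with sign $(-1)^{s-t}$ (from the outer $(-1)^{i}F\widetilde{d^X}$ term, with $i=s-t$), to $(s,t+1,j)$ with sign $(-1)^s$ (from the inner $(-1)^{s}F\widetilde{d^Y}$ term), and to $(s-1,t,j)$ with sign $+1$ (from $\eta$). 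On the right hand side, using the Koszul convention $d^{Y\otimes X}(y_t\otimes x_j)=d^Y(y_t)\otimes x_j+(-1)^t y_t\otimes d^X(x_j)$ for the total complex, together with the outer sign $(-1)^{i}=(-1)^s$ from the definition of $L_{(Y\otimes X)_\bullet}$, the three arrows are: to $(s,t+1,j)$ with sign $(-1)^s$, to $(s,t,j+1)$ with sign $(-1)^{s+t}$, and to $(s-1,t,j)$ with sign $+1$. Since $(-1)^{s-t}=(-1)^{s+t}$, the three pairs of signs coincide, so $\alpha$ is a chain isomorphism.

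Finally, because $\alpha$ is the identity on each triple-graded summand, its naturality in $F_\bullet$ and in morphisms of complexes of functors is automatic, so $\alpha$ descends to an isomorphism of triangulated functors $L_{X_\bullet}\circ L_{Y_\bullet}\cong L_{(Y\otimes X)_\bullet}$ between the corresponding homotopy categories. One subsidiary point that needs a line of argument is that $Y_\bullet\otimes_{RHc} X_\bullet$ is itself a complex of permeable $RKd$-$RGb$-bimodules, so that the right hand side makes sense; this follows from the identification $(Y_t\otimes_{RHc} X_j)\otimes_{RGb}M = Y_t\otimes_{RHc}(X_j\otimes_{RGb}M)$ and the permeability of each $X_j$ and $Y_t$. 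The only real obstacle is the Koszul sign bookkeeping, but, as the computation shows, the paper's outer sign convention $(-1)^{i}$ in the definition of $L_{(-)}$ is exactly what is needed for the iterated $L$-construction to match the standard total complex of $Y_\bullet\otimes_{RHc} X_\bullet$ on the nose, with no auxiliary sign correction on $\alpha$.
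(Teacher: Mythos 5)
Your proposal is correct and follows essentially the same route as the paper: identify $L_{X_\bullet}\circ L_{Y_\bullet}(F_\bullet)$ with $L_{Y_\bullet\otimes_{RHc}X_\bullet}(F_\bullet)$ termwise using additivity of the $F_i$ and associativity of the tensor product, then match the differentials by a sign check (the paper manipulates the global formula for $\Delta_k(M)$, while you compare the three arrows out of each triple-graded summand, which amounts to the same computation and is if anything cleaner), and finally observe that naturality in $F_\bullet$ is automatic because the isomorphism only involves the canonical additivity isomorphisms. Your subsidiary remark that $Y_\bullet\otimes_{RHc}X_\bullet$ is again a complex of permeable bimodules is also the (implicit) justification the paper relies on for the right-hand side to be defined.
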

\begin{proof}
We use the following convention for the tensor product of complexes:
\begin{align*}
Y_{\bullet}\otimes_{RHc}X_{\bullet} = \bigoplus_{i+j=k} Y_{i}\otimes_{RHc} X_{j},
\end{align*}
the differential, denoted by $D_{\bullet}$ is:
\begin{equation*}
D_{k}=\bigoplus_{i+j=k}\big((-1)^{i}Id_{Y_{i}}\otimes d^{X}_{j} + d_{i}^{Y}\otimes Id_{X_{j}}\big). 
\end{equation*}
Let $M$ be a $p$-permutation $RGb$-module and let $k$ be an integer. Let $F_{\bullet}$ be a complex of functors which belong to $Fun_{R}^{+}(d)$. Since the functors $F_{i}$ are additive functors, and since $X_{\bullet}$ and $Y_{\bullet}$ are bounded complex, it is clear that: $$L_{X_{\bullet}} \circ L_{Y_{\bullet}}(F)(M)_{k} \cong L_{Y_{\bullet}\otimes_{RHc}X_{\bullet}}(F)(M)_{k}.$$ Indeed:
\begin{align*}
L_{X_{\bullet}} \circ L_{Y_{\bullet}}(F)(M)_{k} &= \bigoplus_{n\in\mathbb{Z}}(L_{Y}(F))_{n}(X_{n-k}\otimes_{RGb}M)\\
&=\bigoplus_{n\in\mathbb{Z}} \bigoplus_{m\in\mathbb{Z}}(F_{m}(Y_{m-n}\otimes_{RHc}X_{n-k}\otimes_{RGb}M))\\
&=\bigoplus_{m\in\mathbb{Z}} F_{m}\big(\bigoplus_{n\in\mathbb{Z}}Y_{m-n}\otimes_{RHc}X_{n-k}\otimes_{RGb}M\big)\\
&=\bigoplus_{m\in\mathbb{Z}} F_{m}\big((Y_{\bullet} \otimes_{RHc} X_{\bullet})_{m-k}\otimes_{RGb}M\big)\\
&=L_{Y_{\bullet}\otimes_{RHc}X_{\bullet}}(F)(M)_{k}.
\end{align*}
If we denote by $\Delta$ the differential of $L_{X_{\bullet}} \circ L_{Y_{\bullet}}(F)$, by $\partial$ the differential of $L_{Y_{\bullet}\otimes_{RHc}X_{\bullet}}(F)$ and by $\delta$ the differential of $L_{Y_{\bullet}}(F)$, we have:
\begin{align*}
\Delta_{k}(M) &= \bigoplus_{n\in \mathbb{Z}} \delta_{n}(X_{n-k}\otimes_{RGb}M) + (-1)^{n}L_{Y}(F)_{n}(d_{n-k+1}^{X}\otimes_{RGb}Id_{M})\\
&=\bigoplus_{n\in\mathbb{Z}}\bigg(\bigoplus_{m\in\mathbb{Z}} \eta_{m}(Y_{m-n}\otimes_{RHc} X_{n-k}\otimes_{RGb} M) \bigg)\\
&+ (-1)^{m} F_{m}(d_{m-n+1}^{Y}\otimes_{RHc} Id_{X_{n-k}} \otimes_{RGb} Id_{M})\\
&+ (-1)^{m} (-1)^{n-m} F_{m}(Id_{Y_{m-n}}\otimes_{RHc} d^{X}_{n-k+1}\otimes_{RGb}Id_{M}  \bigg)\\
&=\bigoplus_{m\in\mathbb{Z}}\bigg( \eta_{m}\big((Y_\bullet \otimes_{RHc} X_{\bullet})_{m-k}\otimes_{RGb} M\big)\\
&+ (-1)^{m} F_{m}\big(\bigoplus_{m\in\mathbb{Z}} d^{Y}_{m-n}\otimes_{RHc}Id_{X_{m-k+1}}\otimes_{RGb}Id_{M}\big)\\
&=(-1)^{m} F_{m}\big(\bigoplus_{m\in\mathbb{Z}} (-1)^{m-n} Id_{Y_{m-n}}\otimes_{RHc}d^{X}_{n-k+1}\otimes_{RGb} Id_{M}\big)\bigg)\\
&=\bigoplus_{m\in\mathbb{Z}}\bigg( \eta_{m}\big((Y_\bullet \otimes_{RHc} X_{\bullet})_{m-k}\otimes_{RGb} M\big) + (-1)^{m}F_{m}\big(D_{m-k+1}\otimes_{RGb}Id_{M}\big)\bigg)\\
&=\partial_{k}(M). 
\end{align*}
Since the isomorphism $L_{X}\circ L_{Y}(F) \cong L_{Y\otimes_{RHc} X}(F)$ basically involves only some isomorphisms of the form $F(V\oplus W)\cong F(V)\oplus F(W)$, for some $RKd$-modules, which are functorial in $F$, the isomorphism $L_{X}\circ L_{Y}(F) \cong L_{Y\otimes_{RHc} X}(F)$ is functorial in $F$. 
\end{proof}
\begin{de}
Let $G$ and $H$ be two finite groups. Let $b$ be a block of $RG$ and $c$ be a block of $RH$. Then a \emph{permeable} derived equivalence between $RGb$ and $RHc$ is:
\begin{enumerate}
\item A bounded complex $X$ of $RGb$-$RHc$-bimodules, which are projective as $RGb$-module and as $RHc$-module, such that:
\begin{itemize}
\item $X\otimes_{RHc} X^{*} \cong RGb$ in the homotopy category of $RGb$-bimodules. That is there exist a contractile complex $C$ of (permeable) $RGb$-bimodules such that
\begin{equation*}
X\otimes_{RHc} X^{*} = RGb \oplus C.
\end{equation*} 
\item $X^{*}\otimes_{RGb} X\cong RHc$ in the homotopy category of $RHc$-bimodules. That is there exist a contractile complex $C'$ of (permeable) $RHc$-bimodule such that
\begin{equation*}
X^{*}\otimes_{RGb} X = RHc \oplus C'.
\end{equation*} 
\end{itemize}
\item All the terms of the complexes $X$ and $X^{*}$ are permeable bimodules. 
\end{enumerate}
The complexes $X$ and $X^{*}$ are called permeable (two-sided) tilting complexes. 
\end{de}
\begin{re}
It is clear that a splendid derived equivalence (see \cite{splendid}) is a permeable equivalence since all the terms of the tilting complex are $p$-permutation bimodules.
\end{re}
\begin{lemma}
Let $X_{\bullet}$ be a bounded complex which induces a permeable derived equivalence between $RHc$ and $RGb$. Then the functor $L_{X_{\bullet}}$ induces a functor from $K^{b}(proj(Fun_{R}^{+}(c)))$ to $K^{b}(proj(Fun_{R}^{+}(b)))$. 
\end{lemma}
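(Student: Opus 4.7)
The plan is to check separately that $L_{X_{\bullet}}$ preserves boundedness and preserves projectivity of every term. Because $L_{X_{\bullet}}$ is additive, it is enough to understand its effect on a single representable functor for every bimodule $X_{j}$ occurring in the complex $X_{\bullet}$. Recall that the projectives in $Fun_{R}^{+}(c)$ are (summands of direct sums of) the representable functors $Y_{V} = \mathrm{Hom}_{RHc}(-,V)$ for $V \in perm_{R}^{+}(c)$; via Yoshida's equivalence (Corollary \ref{yoshida_co}) and Corollary \ref{coro_block}, these correspond exactly to the fixed-point functors $FP_{V}$ of the $p$-permutation $RHc$-modules lying in the block $c$.

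The first step is a direct computation. For a single permeable bimodule $X$ and $W \in perm_{R}^{+}(b)$, the tensor--hom adjunction gives
\[
L_{X}(Y_{V})(W) = \mathrm{Hom}_{RHc}(X\otimes_{RGb}W,V) \cong \mathrm{Hom}_{RGb}(W,\mathrm{Hom}_{RHc}(X,V)),
\]
so $L_{X}(Y_{V}) \cong Y_{\mathrm{Hom}_{RHc}(X,V)}$. Since each $X_{j}$ is projective as an $RHc$-module (a hypothesis of the definition of a permeable derived equivalence), the canonical evaluation map $X_{j}^{*}\otimes_{RHc}V \to \mathrm{Hom}_{RHc}(X_{j},V)$ is an isomorphism, and the permeability of $X_{j}^{*}$ places the right-hand side in $perm_{R}^{+}(b)$. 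Thus $L_{X_{j}}(Y_{V})$ is again representable, hence projective in $Fun_{R}^{+}(b)$.

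For the second step, if $F_{\bullet}$ is a bounded complex of projectives in $Fun_{R}^{+}(c)$, then the term in degree $k$ of the total complex $L_{X_{\bullet}}(F_{\bullet})$ is the finite direct sum $\bigoplus_{i-j=k}F_{i}\widetilde{X_{j}}$; each summand is projective in $Fun_{R}^{+}(b)$ by the first step together with additivity, and only finitely many degrees $k$ are nonzero because both $F_{\bullet}$ and $X_{\bullet}$ are bounded. This yields $L_{X_{\bullet}}(F_{\bullet}) \in K^{b}(proj(Fun_{R}^{+}(b)))$.

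The main technical point I expect to have to verify carefully is the identification $\mathrm{Hom}_{RHc}(X_{j},V)\cong X_{j}^{*}\otimes_{RHc}V$, which is what converts the right-adjoint appearing through Yoneda into an object controlled by the permeability of $X^{*}$; it rests squarely on the projectivity of each $X_{j}$ as an $RHc$-module, a condition built into the definition of a permeable tilting complex. Once this identification is in hand, the rest of the argument is purely formal bookkeeping of degrees and applications of additivity.
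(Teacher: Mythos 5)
Your proof is correct and follows essentially the same route as the paper's: identify the projectives of $Fun_{R}^{+}(c)$ with the representable functors $Y_{V}$, apply tensor--hom adjunction to get $L_{X_{j}}(Y_{V})\cong Y_{\mathrm{Hom}_{RHc}(X_{j},V)}$, and use the projectivity of $X_{j}$ as an $RHc$-module to identify $\mathrm{Hom}_{RHc}(X_{j},V)$ with $X_{j}^{*}\otimes_{RHc}V$, which lies in $perm_{R}^{+}(b)$ by permeability of $X^{*}$. The only difference is that you make the boundedness bookkeeping explicit, which the paper leaves implicit.
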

\begin{proof}
The finitely generated projective objects of the category $Fun_{R}^{+}(c)$ are the Yoneda functors, that is $Y_{V}=Hom_{RHc}(-,V),$ where $V$ is a finitely generated $p$-permutation $RHc$-module. Let $(F_{\bullet},\eta_{\bullet})$ be a right bounded complex of Yoneda functors. That is the non-zero terms are of the form $F_{i} = Hom_{RHc}(-,V_{i})$ for a finitely generated $p$-permutation $RHc$-module $V_{i}$. Let $M$ be a $p$-permutation $RGb$-module. Then, we have:
\begin{align*}
\big(L_{X_{\bullet}}(F_{\bullet})\big)_{k}(M) &= \bigoplus_{i-j=k} F_{i}(X_{j}\otimes M) \\
&=\bigoplus_{i-j=k} Hom_{RH}(X_{j}\otimes_{RG} M, V_{i})\\
&\cong \bigoplus_{i-j=k} Hom_{RG}(M,Hom_{RH}(X_{j},V_{i})). 
\end{align*}
Since $X_{j}$ is projective as $RHc$-module, we have, by Corollary $9.4.2$ \cite{derived_zimmermann}, an isomorphism of functors
 $$Hom_{RHc}(X_{j},-) \cong Hom_{R}(X_{j},R)\otimes_{RHc} - .$$
 Now, $Hom_{R}(X_{j},R)$ is a permeable bimodule. Then the $RGb$-module 
 $$Hom_{RH}(X_{j},V_{i})\cong Hom_{R}(X_{j},R)\otimes_{RHc} V_{j}$$ 
 is a $p$-permutation $RGb$-module.\newline Since the isomorphism $Hom_{RH}(X_{j}\otimes_{RG} M, V_{i}) \cong Hom_{RG}(M,Hom_{RH}(X_{j},V_{i}))$ is natural in $M$, we have an isomorphism of functors $$\big(L_{X_{\bullet}}(F_{\bullet})\big)_{k} \cong \bigoplus_{i-j=k} Hom_{RG}(-,Hom_{RH}(X_{j},V_{i})),$$
So $L_{X_\bullet}(F_{\bullet})_{k}$ is a (finite) direct sum of finitely generated projective functors.
\end{proof}
\begin{theo}\label{derived_equivalences}
Let $G$ and $H$ be two finites groups, let $b$ be a block of $RG$ and $c$ be a block of $RH$. If the block algebras $RGb$ and $RHc$ are permeable derived equivalent, then the categories $Comack_{R}(b)$ and $Comack_{R}(c)$ are derived equivalent.
\end{theo}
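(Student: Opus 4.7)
The plan is to use the tilting complexes $X_\bullet$ and $X_\bullet^*$ to build mutually quasi-inverse triangulated functors between the bounded derived categories of $Fun_R^+(b)$ and $Fun_R^+(c)$, and then to transport the result to $Comack_R$ via the Yoshida equivalence of Corollary \ref{yoshida_co}. Throughout, $X_\bullet$ is a bounded complex of permeable $RGb$-$RHc$-bimodules and $X_\bullet^*$ is a bounded complex of permeable $RHc$-$RGb$-bimodules.

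First, the three preceding lemmas supply the triangulated functors
\[
L_{X_\bullet} : K^-(Fun_R^+(b))\to K^-(Fun_R^+(c)),\qquad L_{X_\bullet^*} : K^-(Fun_R^+(c))\to K^-(Fun_R^+(b)),
\]
both of which restrict to the subcategories $K^b(proj(Fun_R^+(-)))$ by the previous lemma on projective functors.

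Second, I verify the quasi-inverse property. By the composition lemma $L_{X_\bullet^*}\circ L_{X_\bullet}\cong L_{X_\bullet\otimes_{RHc} X_\bullet^*}$. By the definition of a permeable derived equivalence $X_\bullet\otimes_{RHc} X_\bullet^*= RGb\oplus C$ with $C$ contractible; the direct-sum lemma followed by the contractibility lemma then gives $L_{X_\bullet\otimes_{RHc} X_\bullet^*}\cong L_{RGb}\oplus L_C\cong L_{RGb}$ in the homotopy category. A direct computation $L_{RGb}(F)(V) = F(RGb\otimes_{RGb} V)\cong F(V)$, natural in both variables, identifies $L_{RGb}$ with the identity functor on $Fun_R^+(b)$. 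The symmetric argument applied to $X_\bullet^*\otimes_{RGb} X_\bullet\cong RHc\oplus C'$ yields $L_{X_\bullet}\circ L_{X_\bullet^*}\cong \mathrm{Id}$ on $Fun_R^+(c)$. Hence $L_{X_\bullet}$ and $L_{X_\bullet^*}$ are mutually quasi-inverse equivalences between $K^b(Fun_R^+(b))$ and $K^b(Fun_R^+(c))$ (and between the projective subcategories).

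Third, I argue these descend to the bounded derived categories. Since each $X_j$ is permeable, for any $V\in perm_R^+$ the object $X_j\otimes V$ again lies in $perm_R^+$, so evaluation at $X_j\otimes V$ is an exact functor on $Fun_R^+$; a term-wise quasi-isomorphism of functor-complexes is thus sent to a term-wise quasi-isomorphism, and since $X_\bullet$ and our functor-complexes are bounded, a standard totalization (spectral sequence on the double complex) promotes this to preservation of all quasi-isomorphisms. The induced functors on $D^b$ remain mutually quasi-inverse, and combining this with the Yoshida equivalence $Comack_R(-)\cong Fun_R^+(-)$ of Corollary \ref{yoshida_co} gives the theorem. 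The main technical obstacle I expect is careful bookkeeping: verifying that the chain of isomorphisms furnished by the composition, direct-sum and contractibility lemmas is sufficiently natural in the functor variable $F_\bullet$ that the composites genuinely yield isomorphisms of triangulated functors $L_{X_\bullet^*}\circ L_{X_\bullet}\cong \mathrm{Id}$ and $L_{X_\bullet}\circ L_{X_\bullet^*}\cong \mathrm{Id}$, rather than merely pointwise identifications on each object.
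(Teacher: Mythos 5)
Your proposal is correct, and its core is the same as the paper's: you chain the composition lemma, the additivity lemma and the contractibility lemma to obtain $L_{X_\bullet^*}\circ L_{X_\bullet}\cong L_{X_\bullet\otimes X_\bullet^*}\cong L_{RGb\oplus C}\cong L_{RGb}\cong \mathrm{Id}$ and its symmetric counterpart, exactly as in the printed proof. Where you genuinely diverge is the passage from homotopy categories to derived categories. The paper stops at a triangulated equivalence between $K^{-}(proj(Fun_R^+(b)))$ and $K^{-}(proj(Fun_R^+(c)))$ and then invokes Theorem 6.4 of \cite{rickard_morita}, which converts an equivalence of homotopy categories of projectives into an equivalence of the bounded derived categories; this is precisely why the preceding lemma showing that $L_{X_\bullet}$ preserves complexes of projective functors is needed. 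You instead descend directly: evaluation at any object of $perm_R^+$ is exact in the functor category, so a quasi-isomorphism of complexes of functors is an objectwise one, precomposition with each $t_{X_j}$ preserves it column by column, and the finite filtration of the total complex of the bounded double complex upgrades this to a quasi-isomorphism $L_{X_\bullet}(F_\bullet)\to L_{X_\bullet}(F'_\bullet)$; the natural isomorphisms of functors then descend along the localization $K^b\to D^b$. Both routes are sound. Yours avoids citing Rickard's Morita theory at the cost of the extra (standard but genuine) verification that $L_{X_\bullet}$ preserves all quasi-isomorphisms of bounded complexes, while the paper only needs to control $L_{X_\bullet}$ on complexes of projectives and outsources the rest to \cite{rickard_morita}. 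Your closing concern about naturality in the variable $F_\bullet$ is the right one to have, and it is settled by the lemmas themselves, each of which explicitly records that its isomorphism is functorial in $F$.
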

\begin{proof}
It is enough to check that $Fun_{R}^{+}(b)$ and $Fun_{R}^{+}(c)$ are derived equivalent. 
Let $X$ be a permeable tilting complex for $RHc$ and $RGb$. There exist a contractile complex of permeable $RHc$-bimodules such that:
\begin{equation*}
X\otimes_{RGb}X^{*} = RHc \oplus C,
\end{equation*}
Then as functors between the homotopy category $K^{-}(proj(Fun_{R}^{+}(c)))$, we have:
\begin{align*}
L_{X^{*}}\circ L_{X}&\cong L_{X\otimes_{RGb} X^{*}}\\
&\cong L_{RHc\oplus C}\\
&\cong L_{RHc}\oplus L_{C}\\
&\cong L_{RHc}.
\end{align*}
Now, it is clear that $L_{RHc}$ is the identity of $K^{-}(proj(Fun_{R}^{+}(c)))$. Conversely, we have:
\begin{equation*}
L_{X}\circ L_{X^{*}} \cong L_{RGb}.
\end{equation*}
So the homotopy categories $K^{-}(proj(Fun_{R}^{+}(c)))$ and $K^{-}(proj(Fun_{R}^{+}(b)))$ are equivalent (as triangulated categories). By Theorem $6.4$ \cite{rickard_morita}, the categories $D^{b}(Comack_{R}(c))$ and $D^{b}(Comack_{R}(b))$ are equivalent. 
\end{proof}
\section{Applications.}
\subsection{Nilpotent blocks.}
Although the determinant of the Cartan Matrix of a block $b$ of $kG$ is a power of $p$, for the corresponding blocks of the Mackey algebra, it is much more complicated (see \cite{bouc_cartan}). By the results of \cite{tw} this determinant is non zero. However the determinant of the Cartan matrix of a block of a cohomological Mackey algebra can be zero. Bouc in \cite{bouc_cartan} proved that the Cartan matrix of $co\mu_{k}(b)$ is non singular if and only if the block $b$ is a nilpotent block with cyclic defect group. This proof is based on a combinatorial approach, and it may be surprising that nilpotent blocks \emph{and} cyclic defect groups appear in that situation. We will apply Theorem \ref{derived_equivalences} to this situation, and show that it is in fact very natural. 
\newline Let $B$ be a block of $kG$, for an arbitrary finite group $G$. If $B$ is a nilpotent block with defect group $P$, then by Puig's Theorem (see \cite{nilpotent_puig} or \cite{nilpotent_revisited}), there is an isomorphism of $k$-algebras,
\begin{equation*}
B\cong Mat(m,kP),
\end{equation*}
for some $m\in\mathbb{N}$. For the cohomological Mackey algebras, we can lift an equivalence between blocks of group algebras, but for this we need that the equivalence sends $p$-permutation modules to $p$-permutation modules. Unfortunately it is \emph{not} always the case. If the reader is not convinced by this fact he might look at Section $6.2$ of this paper, or at Section $7.4$ of \cite{splendid}.
\newline By the results of sections 7.3 and 7.4 of \cite{splendid} and results of \cite{bouc_dade} and \cite{mazza_thesis}, if $p>2$, or $P$ is abelian (N.B. in fact one can ask weaker condition in case of $p=2$), we can replace the bimodule which gives the Morita equivalence between $B$ and $kP$ by a splendid tilting complex of $B$-$kP$-bimodules.
\begin{coro}
Let $B=kGb$ be a nilpotent block with defect $p$-group $P$. If $p=2$ assume that $P$ is abelian. Then
\begin{equation*}
D^{b}(co\mu_{k}(G)\iota(b)\hbox{-Mod})\cong D^{b}(co\mu_{k}(P)\hbox{-Mod}) \hbox{  as triangulated categories.}
\end{equation*}
\end{coro}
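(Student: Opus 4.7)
The plan is to combine Puig's structure theorem for nilpotent blocks with the existence of a splendid \emph{tilting complex} refining the Puig equivalence, and then feed the result into Theorem \ref{thea} (equivalently Theorem \ref{derived_equivalences}). The target derived equivalence is therefore built entirely by lifting a derived equivalence of group algebras, with no intrinsic Mackey-theoretic construction required.

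First, I would invoke Puig's theorem to get a Morita equivalence $B\cong \mathrm{Mat}(m,kP)$, so that $B$ and $kP$ are Morita equivalent $k$-algebras via the associated source algebra bimodule $M$. The subtle point, as stressed in the paragraph preceding the corollary, is that $M$ itself is in general \emph{not} a $p$-permutation bimodule, hence this Morita equivalence need not be permeable and cannot be fed directly into Corollary \ref{yoshida_co} or Theorem \ref{derived_equivalences}. The second step is to remove this obstruction by replacing $M$ by a two-sided \emph{splendid tilting complex} $X_{\bullet}$ of $B$-$kP$-bimodules inducing the same derived equivalence. This is exactly what is achieved by the results cited in the preceding paragraph, namely sections 7.3 and 7.4 of \cite{splendid} together with \cite{bouc_dade} and \cite{mazza_thesis}, under the hypothesis that $p>2$ or that $P$ is abelian. (This is where the mild hypothesis on $p=2$ enters; otherwise endotrivial issues obstruct the existence of such a splendid refinement.)

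Third, by construction the complex $X_{\bullet}$ has all terms $p$-permutation $B$-$kP$-bimodules, so it is a permeable tilting complex in the sense of the previous section, and induces a splendid (hence permeable) derived equivalence between $B=kGb$ and $kP$. Since $P$ is a $p$-group, $kP$ is indecomposable as an algebra, i.e.\ its unique block idempotent is $1$, and $\iota(1)$ is the unit of $co\mu_{k}(P)$. Applying Theorem \ref{thea} (or directly Theorem \ref{derived_equivalences}) therefore yields
\begin{equation*}
D^{b}(co\mu_{k}(G)\iota(b)\text{-Mod})\cong D^{b}(co\mu_{k}(P)\text{-Mod})
\end{equation*}
as triangulated categories, which is the desired conclusion.

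The main obstacle is not the Mackey-theoretic machinery but the existence of the splendid tilting complex refining Puig's Morita equivalence; once that external input is granted, everything else is a direct application of Theorem \ref{derived_equivalences}. The restriction at $p=2$ is precisely the range of validity of the splendid refinement in \cite{splendid}, \cite{bouc_dade}, \cite{mazza_thesis}, so weakening this hypothesis would require a refinement of those results rather than any new argument on the cohomological Mackey side.
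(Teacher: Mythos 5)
Your proposal is correct and follows essentially the same route as the paper: Puig's theorem gives the Morita equivalence $B\cong \mathrm{Mat}(m,kP)$, the cited results of Rickard, Bouc and Mazza upgrade it to a splendid (hence permeable) tilting complex under the stated hypothesis on $p$ and $P$, and Theorem \ref{derived_equivalences} then transports the derived equivalence to the cohomological Mackey algebras. No discrepancies to report.
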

Since the determinant of Cartan matrices is invariant under derived equivalences, the determinant of the Cartan matrix $co\mu_{k}(G)\iota(b)$ is non zero if and only if the determinant of the Cartan matrix $co\mu_{k}(P)$ is non zero. However it is well known that this is the case if and only if the group $P$ is cyclic: indeed the projective indecomposable cohomological Mackey functors for a $p$-group $P$ are $FP_{Ind_{Q}^{P}(k)}$ for $Q\leqslant P$. By adjunction, the coefficient of the Cartan matrix indexed by two projective $FP_{Ind_{Q}^{P}(k)}$ and $FP_{Ind_{Q'}^{P}(k)}$ is:
\begin{align*}
C_{Q,Q'}&=dim_{k}Hom_{kP}(Ind_{Q}^{P}(k),Ind_{Q'}^{P}(k))\\
&=dim_{k}Hom_{kP}(k,Res^{P}_{Q}Ind_{Q'}^{P}k)\\
&= Card([Q\backslash P / Q']).
\end{align*}
By the main result of \cite{thevenaz_cyclic}, this matrix is non degenerate if and only if $P$ is cyclic. 
\subsection{Application to representation's theory of finite groups.}
As immediate, but useful corollary of Proposition \ref{prop_morita}, we have:
\begin{coro}
Let $G$ and $H$ be two finite groups. Let $b$ be a block of $RG$ and $c$ be a block of $RH$. If the cohomological Mackey algebras $co\mu_{R}(G)\iota(b)$ and $co\mu_{R}(H)\iota(c)$ do not have the same Cartan matrix, then $RGb$ and $RHc$ are \emph{not} `splendidly' Morita equivalent. 
\end{coro}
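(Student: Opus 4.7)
The plan is to prove the contrapositive: assume that $RGb$ and $RHc$ are splendidly Morita equivalent, and show that the cohomological Mackey algebras $co\mu_{R}(G)\iota(b)$ and $co\mu_{R}(H)\iota(c)$ have the same Cartan matrix. By Proposition~\ref{prop_morita}, it suffices to check that a splendid Morita equivalence is an instance of a permeable Morita equivalence, since then the equivalence $Comack_{R}(b)\cong Comack_{R}(c)$ will follow automatically.

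First I would unpack the definition of a splendid Morita equivalence. Such an equivalence is given by an $RHc$-$RGb$-bimodule $X$ which is a $p$-permutation bimodule (i.e.\ a $p$-permutation module when viewed over $R[H\times G^{\mathrm{op}}]$) and which induces an equivalence $X\otimes_{RGb}-\colon RGb\textup{-}\mathrm{Mod}\to RHc\textup{-}\mathrm{Mod}$. The key observation is that if $V$ is a finitely generated $p$-permutation $RGb$-module, then $X\otimes_{RGb}V$ is a $p$-permutation $RHc$-module: indeed this follows from the standard fact that the tensor product over $RG$ of two $p$-permutation bimodules is again a $p$-permutation bimodule (combining restriction/inflation from $H\times G^{\mathrm{op}}$ and $G\times G^{\mathrm{op}}$ to $H\times G^{\mathrm{op}}\times G$ and taking the diagonal). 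So property $\mathcal{P}$ holds for $X$. The same argument applies to the $R$-dual $X^{*}$, which is again a $p$-permutation bimodule, so $X$ satisfies both conditions of the definition of a permeable Morita equivalence.

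Then Proposition~\ref{prop_morita} yields an equivalence of categories $Comack_{R}(b)\cong Comack_{R}(c)$. Via Corollary~\ref{yoshida_co}, this is an equivalence of categories of modules
\begin{equation*}
co\mu_{R}(G)\iota(b)\textup{-}\mathrm{Mod}\cong co\mu_{R}(H)\iota(c)\textup{-}\mathrm{Mod},
\end{equation*}
that is, a Morita equivalence between these two algebras. Since Morita equivalent algebras have Cartan matrices that agree up to a permutation of the indexing of the projective indecomposable modules, the Cartan matrices of $co\mu_{R}(G)\iota(b)$ and $co\mu_{R}(H)\iota(c)$ are equal (in the sense used throughout the paper, i.e.\ up to relabelling). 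Taking the contrapositive gives the corollary.

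The only non-formal step is the verification that $p$-permutation bimodules preserve $p$-permutation modules under tensor product; this is a well-known and easy fact but is the one place where the argument uses more than pure formal category theory. Everything else is a direct consequence of results established earlier in the paper.
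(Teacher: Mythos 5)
Your proposal is correct and follows essentially the same route as the paper: the paper states this corollary as an immediate consequence of Proposition \ref{prop_morita}, relying on the (earlier remarked) fact that a splendid Morita equivalence is permeable, and your argument simply spells out that observation together with the invariance of Cartan matrices under Morita equivalence. Nothing further is needed.
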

This is useful since there are algorithm which compute these Cartan matrices. By testing this algorithm, the author found an astonishing (at least for him) example of nilpotent blocks with quaternion defect group, where the comportement of the simple modules is rather sophisticated. 
\newline Let $k$ be an algebraically closed field of characteristic $2$. Let $p$ be an odd prime. Let $X_{p^3}$ be an extra-special group of exponent $p$, that is: 
\begin{equation*}
X_{p^3} = {<}a,b,z\ \ ;\ a^p=b^p=z^p=1,\ [a,b]=z,\ [a,z]=[b,z]=1\ {>}.
\end{equation*}
Let $Q_{8}$ be a quaternion group of order $8$, that is:
\begin{equation*}
Q_{8}:={<}i,j\ ;\ i^4=1,\ i^2=j^2,\ j i j^{-1}=i^{-1}{>}
\end{equation*}
Then, one can represent $Q_{8}$ as a subgroup of $GL_{2}(\mathbb{F}_{p})$ by sending $i$ to the matrix $\left(\begin{array}{cc}0 & -1 \\1 & 0\end{array}\right)$ and $j$ to the matrix $\left(\begin{array}{cc}x & y \\y & -x\end{array}\right)$, where $x^2+y^2=-1$. 
\newline A matrix $\left(\begin{array}{cc}\alpha & \beta \\\gamma & \delta\end{array}\right)$ induces an automorphism of $X_{p^3}$ defined by: 
\begin{itemize}
\item $a\mapsto a^{\alpha}b^{\beta}$, 
\item $b\mapsto a^{\gamma}b^{\delta}$,
\item $z\mapsto z^{\alpha\delta-\beta\gamma}. $
\end{itemize}
Let us consider $G=X_{p^3}\rtimes Q_{8}$, where $Q_{8}$ acts on $X_{p^3}$ via its representation in $GL_{2}(\mathbb{F}_{p})$. 
\begin{lemma}
There are $\frac{p^2-1}{8}+p$ blocks of $kG$.
\begin{itemize}
\item $\frac{p^2-1}{8}$ blocks with defect $0$.
\item $p$ nilpotent blocks with $Q_{8}$ as defect group.
\end{itemize}
\end{lemma}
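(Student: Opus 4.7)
The plan is to apply Clifford-Fong theory with respect to the normal $2'$-subgroup $N := X_{p^3}$ of $G$. Since $|N| = p^3$ is odd, $kN$ is semisimple and its simple modules are in bijection with $\mathrm{Irr}(N)$; since $G/N \cong Q_8$ is a $2$-group, every block of $kN$ is covered by a unique block of $kG$, so the blocks of $kG$ correspond bijectively to the $Q_8$-orbits on $\mathrm{Irr}(N)$. The task therefore reduces to counting these orbits and identifying the associated defect groups.

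Next I would use the standard character theory of the extraspecial group $X_{p^3}$ of exponent $p$: it has $p^2$ linear characters (factoring through $N/Z(N)\cong \mathbb{F}_p^2$) and $p-1$ characters of degree $p$, each uniquely determined by its nontrivial central character. A direct computation from the given matrices shows $\det(i)=\det(j)=1$ (using $x^2+y^2=-1$), so $Q_8 \hookrightarrow SL_2(\mathbb{F}_p)$ acts trivially on $Z(N)$; uniqueness then forces each of the $p-1$ degree-$p$ characters to form a singleton $Q_8$-orbit. For linear characters I would show $Q_8$ acts freely on $\mathbb{F}_p^2 \setminus \{0\}$: the unique involution $-I=i^2$ has fixed-point set $\{v : 2v=0\}=0$ (since $p$ is odd), and any order-$4$ element $g$ satisfies $g^2=-I$, so $1$ is not an eigenvalue of $g$ and $g-I$ is invertible. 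By duality, $Q_8$ also acts freely on the $p^2-1$ nontrivial linear characters, yielding $(p^2-1)/8$ orbits of size $8$. Together with the singleton orbit of the trivial character, this gives $1+(p^2-1)/8+(p-1)=(p^2-1)/8+p$ blocks in total.

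It remains to identify defect groups and check nilpotency. For a nontrivial linear $\chi$, freeness gives $I_G(\chi)=N$, and $\mathrm{Ind}_N^G\chi$ has degree $8=|G|_2$, so this block has defect $0$; this accounts for the $(p^2-1)/8$ blocks of defect zero. The principal block has $N$ as normal $2$-complement, so it is isomorphic to $kQ_8$ and nilpotent with defect group $Q_8$. The main obstacle is the remaining $p-1$ blocks arising from degree-$p$ characters $\chi$: here $I_G(\chi)=G$, and Clifford-Fong theory identifies each such block with a twisted group algebra $k_\alpha Q_8$ for some class $\alpha \in H^2(Q_8,k^\times)$. The key point is that since $k$ is algebraically closed of characteristic $2$, the map $x\mapsto x^2$ is a bijection on $k^\times$ (injective because $x^2=1$ forces $(x-1)^2=0$, surjective by algebraic closure), hence $k^\times$ is uniquely $2$-divisible. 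Consequently $H^n(Q_8,k^\times)$ is both annihilated by $|Q_8|=8$ and $8$-divisible, so it vanishes for $n\geq 1$. Thus $\alpha$ is trivial, the block is Morita equivalent to $kQ_8$, and one concludes by Broué-Puig that it is nilpotent with defect group $Q_8$. Assembling the three cases yields the stated decomposition.
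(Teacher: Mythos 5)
Your proof is correct and follows the same route as the paper's (deliberately brief) sketch: Clifford theory with respect to the normal $2'$-subgroup $N=X_{p^3}$, reducing the block count to counting $Q_8$-orbits on $\mathrm{Irr}(N)$, with the degree-$p$ characters giving the $p-1$ non-principal blocks of positive defect. You supply the details the paper omits: the free action of $Q_8$ on the nonzero vectors of $\mathbb{F}_p^2$ (via the unique involution $-I$), the triviality of the action on the centre via the determinant computation, the identification of the defect groups, and the vanishing of $H^2(Q_8,k^\times)$.

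The one step worth tightening is the final inference ``Morita equivalent to $kQ_8$, hence nilpotent by Brou\'e--Puig.'' Nilpotency is defined in terms of Brauer pairs and is not a formal consequence of a Morita equivalence, nor even of an algebra isomorphism with $\mathrm{Mat}_n(kQ_8)$ (whether nilpotency is a Morita invariant is a genuinely delicate question). In the present situation the conclusion is immediate by a different one-line argument: $G$ has a normal $2$-complement, so by Frobenius's normal $p$-complement theorem $N_G(Q)/C_G(Q)$ is a $2$-group for every $2$-subgroup $Q$ of $G$; hence the Brou\'e--Puig criterion is satisfied by every subpair of every block of $kG$, and all blocks of positive defect are nilpotent. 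With that substitution the argument is complete, and it is in fact more detailed than the proof given in the paper.
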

\begin{proof}[Sketch of proof]
Since $X_{p^3}$ is a $2'$-group, the blocks of this group are in bijection with the isomorphism classes of simple modules. There are $p^2-1$ representations which factorise through $C_{p}\times C_{p} = X_{p^3}/D(X_{p^3})$. By usual clifford theory there are $\frac{p^2-1}{8}$ blocks of $kG$ covering all these blocks. Now there are $p-1$ blocks of $kX_{p^3}$ corresponding to the simple modules of dimension $p$, induced by a character of $kC_{p}$. Let $\zeta$ be a $p$-root of $1$ in $k$, then the simple module $V_{\zeta}$ of dimension $p$ is:
\begin{equation*}
V_{\zeta} = Ind_{{<}a,z{>}}^{X_{p^3}}Inf_{{<}z{>}}^{{<}a,z{>}}k_{\zeta}.
\end{equation*}
The inertie group of $V_{\zeta}$ is $G$, so this gives a simple module of $kG$ with $Q_{8}$ as vertex. We denote by $L_{\zeta}$ the $kG$-module such that $Res^{G}_{X_{p^3}}L_{\zeta}\cong V_{\zeta}$. 
\end{proof}
\begin{prop}
Let $\zeta$ be a $p$-root of $1$ in $k$. Let $L_{\zeta}$ be the corresponding simple $kG$-module and let $b_{\zeta}$ be the corresponding block. Then
\begin{itemize}
\item If $p\neq 1\mod 8$, then $kGb_{\zeta}$ is not splendidly Morita equivalent to $kQ_{8}$. 
\item Let $p= 1 \mod 8$ and let $t(\zeta)=\sum_{x\in I} \zeta^{x}$, where $I$ is the set of quadratic residues mod $p$. If $t(\zeta)=0$, then $kGb_{\zeta}$ is splendidly Morita equivalent to $KQ_{8}$.
\item If $t(\zeta)=1$ and $p=17$, then $kGb_{\zeta}$ is \emph{not} splendidly Morita equivalent to $kQ_{8}$.
\end{itemize}
\end{prop}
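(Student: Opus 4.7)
The strategy is to combine the Cartan matrix criterion provided by the preceding corollary with an analysis of the source of the unique simple $kGb_\zeta$-module $L_\zeta$ viewed as a $kQ_8$-module. By Puig's source algebra construction for nilpotent blocks, a splendid Morita equivalence between $kGb_\zeta$ and $kQ_8$ exists if and only if the source module $S$ of $L_\zeta$ is a true permutation $kQ_8$-module rather than merely an endo-permutation one; this is the reformulation I would work with throughout.

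First I would set up the relevant data. Because $Q_8$ acts on $X_{p^3}$ through its embedding in $\mathrm{SL}_2(\mathbb{F}_p)$, it acts trivially on $\langle z\rangle$ and has no non-trivial fixed points on $X_{p^3}/\langle z\rangle$. Consequently $N_G(Q_8)=\langle z\rangle\times Q_8$ and $\overline{N}_G(Q_8)\cong C_p$. The Green correspondent of $L_\zeta$ in $N_G(Q_8)$ then takes the form $k_\zeta\otimes_k S$, where $k_\zeta$ is the one-dimensional character of $\langle z\rangle$ on which $z$ acts by $\zeta$, and $S$ is precisely the source of interest.

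Next I would describe $S$ explicitly. Writing $L_\zeta$ as the canonical extension of $V_\zeta = \mathrm{Ind}_{\langle a,z\rangle}^{X_{p^3}}\mathrm{Inf}_{\langle z\rangle}^{\langle a,z\rangle}k_\zeta$ from $X_{p^3}$ to $G$, the restriction to $\langle z\rangle\times Q_8$ identifies $S$ with a projective representation of $Q_8$ on the $\zeta$-isotypic subspace of $V_\zeta$. Computing the associated $2$-cocycle in terms of the action of $Q_8$ on the basis $\{a^i : 0\leq i<p\}$ of $V_\zeta$ produces an expression involving the eighth roots of unity in $k$ and the Gauss period $t(\zeta)=\sum_{x\in I}\zeta^x$.

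The main obstacle is to translate the algebraic condition ``$S$ is a permutation module'' into the arithmetic conditions on $p$ and $\zeta$. My plan is to show that the cocycle above splits over $k$ precisely when $p\equiv 1\pmod 8$, so that enough roots of unity lie in $\mathbb{F}_p$ to linearise the projective representation, and that the resulting linear representation is a genuine permutation representation precisely when in addition $t(\zeta)=0$. When either condition fails, $S$ is a non-trivial class in the Dade group $D(Q_8)$; by the preceding corollary this produces a discrepancy between the Cartan matrix of $co\mu_k(G)\iota(b_\zeta)$ and the $(|Q\backslash Q_8/Q'|)$-matrix of $co\mu_k(Q_8)$, ruling out splendid Morita equivalence. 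For the delicate case $p=17$, $t(\zeta)=1$, I would complete the proof by invoking the algorithmic Cartan matrix computation alluded to in the text preceding the statement, directly exhibiting a discrepancy in that case too and thereby ruling out a splendid Morita equivalence.
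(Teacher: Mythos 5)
Your overall reduction -- ``$kGb_\zeta$ is splendidly Morita equivalent to $kQ_8$ iff the source of $L_\zeta$ is trivial'' -- is the right one and is what the paper uses. But the mechanism you propose for deciding when the source is trivial does not work. You want to realise the source as a projective representation of $Q_8$ on ``the $\zeta$-isotypic subspace of $V_\zeta$'' and test whether the associated $2$-cocycle splits. First, $z$ is central in $X_{p^3}$ and acts by $\zeta$ on all of $V_\zeta$, so that isotypic subspace is all of $V_\zeta$, which is $p$-dimensional and is not the source. Second, $L_\zeta$ is already an honest $kG$-module, so $\mathrm{Res}^G_{Q_8}L_\zeta$ is a genuine linear representation and there is no cocycle to split; and even at the level of extending $V_\zeta$ from $X_{p^3}$ to $G$, the obstruction lives in $H^2(Q_8,k^\times)$, which vanishes (the Schur multiplier of $Q_8$ is trivial, and $k^\times$ is uniquely $2$-divisible in characteristic $2$), independently of $p\bmod 8$. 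The actual obstruction for the first bullet, which the paper takes from Mazza, is that $\mathrm{Res}^G_{Q_8}L_\zeta$ is endo-trivial with source of dimension $\equiv p \pmod 8$; a trivial source forces dimension $1$, hence $p\equiv 1\pmod 8$.

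For the case $p\equiv 1\pmod 8$ you assert, without a method, that the source is a permutation module \emph{precisely when} $t(\zeta)=0$. The ``only if'' half of that equivalence is exactly what the paper cannot prove in general -- this is why the third bullet is restricted to $p=17$ -- so it cannot be assumed. The paper instead derives a concrete criterion: $k$ is a direct summand of $\mathrm{Res}^G_{Q_8}L_\zeta$ iff there are a $Q_8$-fixed vector $v$ and an invariant linear form $\phi$ with $\phi(v)=1$, which translates into whether the all-ones vector lies in the image of $M-\mathrm{Id}$ for an explicit matrix $M$ of Gauss periods $t_i$; one then exhibits an explicit preimage when $t(\zeta)=0$ and checks by direct computation that none exists when $p=17$, $t(\zeta)=1$. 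Your fallback of ``invoking the algorithmic Cartan matrix computation'' for $p=17$ is not a proof unless the computation is actually carried out, and it is also inconsistent with your earlier ``precisely when'' claim (which, if established, would make any separate treatment of $p=17$ unnecessary). Note finally that the Cartan-matrix corollary can only ever rule out splendid equivalences, so the positive statement in the second bullet must go through the trivial-source criterion, as the paper does.
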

\begin{re}
The condition $p=17$ appears only because we are not able to find a general proof of this result. However it seems that the result should be true for all $p=1\mod 8$. In particular we check it with GAP in several cases. 
\end{re}
\begin{proof}
Here, we are very sketchy. The first part follows from Mazza's work. See Section $4.2$ of \cite{mazza_endo}. It is showed that $Res^{G}_{Q_{8}}L_{\zeta}$ is an endo-trivial module with source $S_{\zeta}$ such that $dim_{k} S_{\zeta} = p \mod 8$. So if $p\neq 1\mod 8$, then $L_{\zeta}$ is not a $2$-permutation module and the Morita equivalence is not splendid. 
\newline If $p=1\mod 8$, then the source can be either the trivial module or an endo-trivial module of dimension $9$. Let $w\in \mathbb{F}_{p}$ such that $w^2 =1$ and let $b$ be a generators of $\mathbb{F}_{p}^{\times}$. The module $Res^{G}_{Q_{8}}L_{\zeta}$ is a trivial source module if and only if $k$ is a direct summand of $Res^{G}_{Q_{8}} L_\zeta$. This appends if and only if there is a vector $v\in L_{\zeta}^{Q_{8}}$ and an invariant linear form $\phi$ on $Res^{G}_{Q_{8}}L_{\zeta}$ such that $\phi(v)=1$.
\newline Let $i \in \mathbb{F}_{p}^{\times}/{<}w{>}$. Let $t_{i}:= \zeta^{b^i}+\zeta^{wb^{i}}+\zeta^{w^2b^{i}}+\zeta^{w^{3}b^{i}}\in \mathbb{F}_{2^{\frac{p-1}{8}}}$ be a Gaussian sum. Let $M$ be the matrix indexed by $\mathbb{F}_{p}^{\times}/{<}w{>}$, where the $(i,j)$th. coefficient is $t_{i+j}$. One can check that $L_{\zeta}$ is a $2$-permutation module if and only if the constant vector $(1,1,\cdots,1)^{t}$ is in the image of $M-Id$.
\newline Now, if $t(\zeta)=0$, we have $(M-Id)\cdot (1,0,1,0,\cdots 1,0)^{t} = (1,1,\cdots, 1)^{t}$.
\newline If $p=17$ and $t(\zeta)=1$ an easy computation shows that $(1,1,\cdots, 1)$ canot be in the image of $M-Id$. 
\end{proof}
\paragraph{Acknowledgements}
 The author would like to thank the foundation FEDER and the CNRS for their financial support, the foundations ECOS and CONACYT for the financial support in the project M10M01. Thanks also go to Serge Bouc my advisor for numerous helpful conversations. 

\par\noindent
{Baptiste Rognerud\\
EPFL / SB / MATHGEOM / CTG\\
Station 8\\
CH-1015 Lausanne\\
Switzerland\\
e-mail: baptiste.rognerud@epfl.ch}
\end{document}